\documentclass[11pt, twoside, leqno]{amsart}  
\usepackage{lipsum}
\usepackage{amsfonts}
\usepackage{graphicx}
\usepackage{epstopdf}
\usepackage{algorithmic}  
\usepackage{calligra}
\usepackage[dvipsnames]{xcolor}
\usepackage{amsfonts,amsmath,amsthm,amssymb}
\usepackage{mathtools}
\usepackage{hyperref}
\usepackage[makeroom]{cancel}
\usepackage{autonum}
\usepackage{hhline}
\usepackage{array}
\usepackage{diagbox}
\usepackage{mdframed}
\usepackage{multicol}
\usepackage{graphicx}
\usepackage{subcaption}
\usepackage{moreverb}
\usepackage{bbm}
\usepackage[margin=1.38in]{geometry}
\usepackage{todonotes}
\usepackage{scalerel,amssymb}
\allowdisplaybreaks
\usepackage{mathrsfs}  
\usepackage{lineno}
\usepackage{todonotes}
\usepackage{tikz}
 \usetikzlibrary{arrows.meta, positioning, fit, backgrounds}
\usepackage{appendix}
\usepackage{enumitem}
\usepackage{pgfplots}
\usetikzlibrary{arrows.meta}
\usepackage[numbers,sort&compress]{natbib}
\definecolor{mygreen}{HTML}{43a047}
\usepackage{subcaption}
\usepackage{doi}

\newcommand{\TE}{{\mathcal{E}}}
\newcommand{\dtau}{\textup{d}\tau}
\newcommand{\st}{{\sup_{0\le\tau\le t}}}

\newcommand{\ddt}{\frac{\textup{d}}{\textup{d}t}}

\newcommand{\dx}{\, \textup{d} x}

\newcommand{\R}{\mathbb{R}}

\hypersetup{hidelinks}

\newtheorem{theorem}{Theorem}
\newtheorem{lemma}{Lemma}
\newtheorem{proposition}{Proposition}

\numberwithin{lemma}{section}
\numberwithin{proposition}{section}
\numberwithin{theorem}{section}
\numberwithin{equation}{section}
\makeatletter
\newcommand{\leqnomode}{\tagsleft@true}
\newcommand{\reqnomode}{\tagsleft@false}
\makeatother

\definecolor{grey}{rgb}{0.5,0.5,0.5}
    \usepackage{xcolor}
\usepackage{hyperref}

\hypersetup{
    colorlinks=true,
    linkcolor=blue,
    citecolor=blue,
    urlcolor=blue
}    
\title[Navier--Stokes--Maxwell system]{Global well-posedness and Decay estimates  for solutions of the  Navier--Stokes--Maxwell system}        
\subjclass[2020]{35L70, 35K05}      
     
\author[B. Said-Houari]{Belkacem Said-Houari$^\ddag$}
\thanks{$^\ddag$Department of Mathematics, College of Sciences, University of
	Sharjah, P. O. Box: 27272, Sharjah, United Arab Emirates    (\href{bhouari@sharjah.ac.ae}{bhouari@sharjah.ac.ae})}
\begin{document}
\begin{abstract} 
We investigate the global solvability and the large-time asymptotic behavior of solutions to the Navier--Stokes--Maxwell system in two and three space dimensions. The system describes the interaction between a viscous incompressible fluid and an electromagnetic field through the Lorenz force and Ohm's law. The system exhibits a parabolic--hyperbolic coupling in which the dissipative Navier--Stokes system interacts with the hyperbolic Maxwell system. The Maxwell system is damped through Ohm's law with the Maxwell correction. For sufficiently small initial data with suitable Sobolev regularity, we establish the global well-posedness of strong solutions and derive optimal decay rates for the solution and its higher-order spatial derivatives. In addition, we show that the electric field decays faster by the extra factor $(1+t)^{-1/2}$ compared to the velocity and magnetic components.  

One of the key ingredients of our analysis is a detailed study of the linearized system, which reveals refined decay properties of the electromagnetic components. These linear estimates play a crucial role in the analysis of the two-dimensional case, allowing us to overcome the logarithmic loss arising from the borderline decay estimates.  

For the nonlinear system, our approach is based on a time-weighted energy method, specifically designed to capture the distinct dissipative properties of the fluid and electromagnetic components. A suitable compensating functional is introduced to exploit the coupling between the electric and magnetic fields and thereby recover the missing dissipation of the magnetic field. Combined with careful analysis of the nonlinear terms,  these estimates yield our desired result.

					\end{abstract}   
	\vspace*{-7mm}   
	\maketitle 
	\section{Introduction}

In this paper, we consider the Navier-Stokes-Maxwell system 
\begin{subequations}\label{Main_System}
\begin{equation}\label{MHD_System}
\left\{
\begin{array}{ll}
\partial_t u+(u\cdot \nabla) u-\mu\Delta u+\nabla p=j\times B,\\[2pt]
\partial_t E-\nabla\times B=-j,\\[5pt]
\partial_t B+\nabla\times E=0,\\[5pt]
\nabla\cdot u=\nabla\cdot B=0,\\[5pt]
\sigma (E+u\times B)=j,
\end{array}
\right. 
\end{equation}
where $(x,t)$ is  in $\R^N \times \R_+$ with $(N=2,3)$. 
The system is supplemented with the initial conditions 
\begin{eqnarray}
    \label{Initial_Data}
(u, E, B)|_{t=0}=(u_0, E_0, B_0).  
\end{eqnarray}
The above system arises in the study of the flow of an electrically conducting
fluid in Plasma dynamics,
which models the motion of charged particles (ions and electrons) in an electromagnetic field. The interested reader is referred to \cite{D-book,Pai_1962,bis-book} for the derivation of \eqref{Main_System} and for some physical introduction to magnetohydrodynamics.  
Here $u, E, B: \R^N \times \R_+\to \R^3$ are vector fields defined on $\R^N $ ($N=2,3$) and represent the velocity field of an incompressible electrically conducting and non-magnetic fluid, the electric field, and the magnetic field, respectively. The first equation in \eqref{MHD_System} is the forced  Navier-Stokes equation for incompressible fluid with the Lorentz force $j\times B$.  The second equation in \eqref{MHD_System} is  Amp\`ere's law with the Maxwell correction, and the third equation is Faraday's law. 
The parameters $\mu>0$ and $\sigma>0$ denote the viscosity and the electric conductivity, respectively. 
The system exhibits a nonlinear coupling between the Navier-Stokes system and the Maxwell system through the Lorentz force   and through the electric current density $j$, which is given by Ohm's law: 
\begin{equation}\label{Ohms_Moving}
j=\sigma (E+u\times B). 
\end{equation}
\end{subequations}
Equation \eqref{Ohms_Moving}  states that the electric current is proportional to the electric field measured in a frame moving with the velocity $u$. This explains the presence of the extra term $u\times B$ in \eqref{Ohms_Moving}. 
The scalar function $p$ in \eqref{MHD_System} stands for the pressure which can be recovered from the velocity $u$ and the Lorenz force $j\times B$ via the explicit Calderon-Zygmund type operator 
\begin{equation}
 p=(-\Delta)^{-1}\big(\nabla\cdot (u\cdot\nabla)u-\nabla\cdot(j\times B)\big).
\end{equation}
System \eqref{MHD_System} is an interesting example of hyperbolic-parabolic coupling and is the hyperbolic version of the classical MHD equations.

Note that in the two-dimensional case, the unknown functions $u, E, B$ and $j$ are defined on the whole space $\R^2$ with values in $\R^3$. In this case, the operator $\nabla$ is given by 
$
\nabla=(\partial_{x_1},\partial_{x_2},0)
$
and 
\begin{equation}
\nabla\times F=(\partial_{x_2}F_3, -\partial_{x_1}F_3,\partial_{x_1}F_2-\partial_{x_2}F_2),\qquad F=(F_1, F_2, F_3). 
\end{equation}

When the electromagnetic effects are absent, namely when  $E=B=0$, 
system \eqref{Main_System}  reduces to the  classical incompressible Navier--Stokes system:
\begin{equation}\label{Navier_Stokes}
\left\{
\begin{aligned}
&\,\partial_t u+(u\cdot \nabla) u-\mu\Delta u+\nabla p=0,\\
&\,\nabla\cdot u=0,\\
&\,u|_{t=0}=u_0. 
\end{aligned}
\right. 
\end{equation}
On the other hand, in the absence of the fluid motion, that is, when $u=0$, system \eqref{Main_System} takes the form of the damped Maxwell system: 
\begin{equation}\label{Maxwell_System}
\left\{
\begin{aligned}
&\,\partial_t E+\sigma E-\nabla\times B=0,\\
&\,\partial_t B+\nabla\times E=0,\\
&\,\nabla\cdot B=0,\\
&\,(E,B)|_{t=0}=(E_0,B_0). 
\end{aligned}
\right. 
\end{equation}
 
System \eqref{Navier_Stokes} has a long and exciting mathematical history.  Since the pioneering work of Leray \cite{Leray_1934}, this system has become the focus of many mathematical research. More precisely, Leray established the existence and uniqueness of global weak solutions in two dimensions, while in three dimensions, he proved the global existence of weak solutions, with uniqueness remaining an open problem.  In addition, the solutions found by Leray are in the class 
\begin{equation}\label{Regularity_Leray}
u\in L^\infty (\R^+; L^2)\cap L^2(\R^+,\dot{H}^1) 
\end{equation}
and satisfy the energy estimate 
\begin{equation}\label{Energy_Ident_Leray}
\Vert u(t)\Vert_{L^2}^2+2\mu\int_0^t\Vert \nabla u(\tau)\Vert_{L^2}^2\dtau\leq \Vert u_0\Vert_{L^2}^2. 
\end{equation}
The global regularity of smooth solutions of \eqref{Navier_Stokes} for arbitrarily large initial data remains one of the fundamental open problems in the mathematical theory of incompressible fluid dynamics.

It is well known that system \eqref{Navier_Stokes} is invariant under the scaling 
\begin{equation}
u_\lambda(x,t)=\lambda u(\lambda x,\lambda^2 t),\qquad p_\lambda(x,t)=\lambda^2p(\lambda x,\lambda^2 t). 
\end{equation}
 More precisely, if $(u,p)$ is a  solution of  \eqref{Navier_Stokes} on $\R^N \times (0,T)$ with initial datum $u_0$,  then $(u_\lambda,p_\lambda)$ is also a solution of \eqref{Navier_Stokes} on $\R^N \times(0,T/\lambda^2) $ with the initial data $u_{0,\lambda}=\lambda u_0(\lambda x)$.
 This scaling naturally leads to the study of \eqref{Navier_Stokes} in critical spaces whose norms are invariant under the above transformation. A substantial theory of well-posedness and asymptotic behavior of solutions has been developed in these scale-invariant functional spaces. Classical example of such spaces includes $L^N(\R^N )$ \cite{Kato1984}, the homogeneous Sobolev space $\dot{H}^{N/2-1}(\R^N )$ \cite{Fujita_1964}, the  homogeneous Besov space $\dot{B}_{p,1}^{N/2-1}(\R^N )$ \cite{Cannone_1994} as well as the space $BMO^{-1}$ introduced in  \cite{Koch_Tataru_2001}.

Going back to the Navier--Stokes--Maxwell equations, 
smooth solutions of \eqref{Main_System} enjoy the following energy identity
\begin{equation}\label{Ident_Energy_1}
\frac{1}{2}\ddt \Vert (u, E, B)(t)\Vert_{L^2}^2 + \mu \Vert \nabla u (t)\Vert_{L^2}^2+ \frac{1}{\sigma}\Vert j(t)\Vert_{L^2}^2=0,  
\end{equation}
for all $t\geq 0$,  which is the only known a priori bound for  \eqref{Main_System}, so far.
It is known that the analysis of the system \eqref{Main_System} is more complicated than that of the Navier--Stokes and MHD equations. This is due to the hyperbolic nature of Maxwell's equations and the nonlinear structure of the Lorenz force. 
Identity \eqref{Ident_Energy_1} shows that the total energy of the system \eqref{Main_System} is dissipated by the viscosity and the electric resistivity. It also suggests that it might be possible to construct a weak solution, similar to the Leary weak solution for the Navier-Stokes system \eqref{Navier_Stokes} at the level of the energy \eqref{Ident_Energy_1}. However, this is still an outstanding   open problem.  
 This is due to the fact that it is hard to prove some compactness needed for the magnetic field that will guarantee the passage to the limit for the term $j\times B$ in the standard  approximating scheme for arbitrary finite energy initial data and  without assuming any additional positive regularity of the electromagnetic field.


Several results for \eqref{Main_System} in spaces close to the energy space have been established. As in the classical Navier-Stokes system, the behavior of the solution depends strongly on the spatial dimension. We therefore briefly recall the main contributions in the two-dimensional and three-dimensional settings separately. 
\subsubsection*{The 2D case} Masmoudi \cite{Masmoudi_2010} proved the existence and uniqueness of global solutions for initial data $(u_0, E_0, B_0)$ in the space 
\begin{equation}\label{Initian_data_Masmoudi}
    L^2(\R^2 )\times H^s(\R^2 )\times H^s(\R^2)\quad \text{for}\quad  s\in (0,1).
\end{equation}
  Ibrahim and Keraani \cite{Ibrahim_2011} established the  global well-posedness for sufficiently small initial data in 
  \begin{equation}\label{Assumption_IK}
      \dot{B}_2^{0,1}(\R^2 )\times L^2_{\log}(\R^2 )\times L^2_{\log}(\R^2 ),
  \end{equation}
  where the subscript ``$\mathrm{log}$'' refers to an additional logarithmic condition on the high-frequency components of the initial data and the space $L^2_{\log}(\R^2 )$ satisfies the property $\cup_{s>0} H^s(\R^2 )\subset L^2_{\log}(\R^2 ) $ with a strict inclusion. The assumptions  \eqref{Assumption_IK} in \cite{Ibrahim_2011} has been relaxed in \cite{Ger_Mess_Ibra_2014}  to initial data in the space 
  \begin{equation}
      L^2_{\log}(\R^2 )\times L^2_{\log}(\R^2 )\times L^2_{\log}(\R^2 ), 
  \end{equation}
  where the authors established a global existence result for small initial data. 
  Recently,   Ars\'enio and  Gallagher 
\cite{ag20} established the first global well posedness result for the  system \eqref{Main_System}  for initial data satisfying \eqref{Initian_data_Masmoudi}. The analysis in \cite{ag20} yields the stronger estimate 
$u\in L^2_{\rm loc} (\R_+; L^\infty(\R^2))$ istead of $u\in L^1_{\rm loc} (\R_+; L^\infty(\R^2))$ obtained previously in \cite{Masmoudi_2010}. More importantly, their estimates are uniform with respect to the speed of light, which allows them to rigorously justify the asymptotic limit towards the MHD system.  

In addition to these developments, we refer the interested reader to \cite{a19,  AHB24} for further results concerning the system \eqref{Main_System} and its inhomogeneous counterpart. 
 \subsubsection*{The 3D case}
In the three-dimensional space, the situation is considerably more delicate, and the energy bound is far from being sufficient to develop a satisfactory well-posedness theory.  Early global existence results were obtained for sufficiently small initial data in functional spaces close to the critical spaces of the Navier-Stokes equation. See for instance \cite{Ibrahim_2011,Ger_Mess_Ibra_2014}. 
More precisely, it was proved in \cite{Ibrahim_2011} that sufficiently small initial data in
\begin{equation}
\dot B^{\frac{1}{2}}_{2,1}(\R^3)\times \dot H^{\frac{1}{2}}(\R^3)\times \dot H^{\frac{1}{2}}(\mathbb R^3)
\end{equation}
give rise to a 
 unique global solution to \eqref{Main_System}. This result was later  improved in \cite{Ger_Mess_Ibra_2014}, where the Besov regularity  assumption on the velocity field was relaxed to $\dot{H}^{\frac{1}{2}}(\R^3)$, yielding global well-posedness for sufficiently small initial data in
\begin{equation}
\dot H^{\frac{1}{2}}(\R^3)\times \dot H^{\frac{1}{2}}(\R^3)\times \dot H^{\frac{1}{2}}(\mathbb R^3).
\end{equation} 
We also refer to \cite{YUE2020103071,YUE2022125747} for global well-posedness results concerning spatial classes of large initial data and to \cite{ahh24} for a recent result in the axisymmetric setting.  

Very recently, the author, together with Ibrahim and Houamed \cite{Said_IBrahim_Houamed}, studied the global well-posedness of \eqref{Main_System} for small initial data in 
\begin{equation}
\dot H^{\frac{N}{2}-1}(\R^N)\times \dot H^{\frac{N}{2}-1}(\R^N)\times \dot H^{\frac{N}{2}-1}(\mathbb R^d),\qquad N\geq 3. 
\end{equation} 
In addition, under the low-frequency assumption
\begin{equation}
    (u_0, E_0, B_0)\in \dot{B}_{2,\infty}^{-N/2}(\R^N),
\end{equation}
we showed the decay rate 
\begin{equation}
    \|(u,E,B)\|_{\dot{H}^\sigma(\R^N)}\lesssim (1+t)^{-\frac{N}{2}-\frac{\sigma}{2}},\qquad -\frac{N}{2}<\sigma\leq \frac{N}{2}-1. 
\end{equation}
Hence, the critical theory gives global existence under weak regularity assumptions and at the same time describes the large-time behavior of the solution up to the critical regularity.  
 

 The aim of this paper is to study the global well-posedness and the decay rate of solutions of \eqref{Main_System} in both two and three space dimensions. In contrast to the energy and critical-spaces theories discussed above, our purpose here is to start with a smoother solution and ask whether one can propagate this higher regularity globally and, more importantly, whether the optimal decay rate can be extended to all derivatives allowed by the initial regularity. Hence, we  
 carry out our analysis at a higher Sobolev regularity, which allows us to describe the dissipation mechanism of the system more precisely.  In particular, using the time-weighted energy method, we establish the global well-posedness of small strong solutions together with optimal decay rates for the solution and its spatial derivatives.  
  The essential difficulty is to construct a time-weighted energy norm that simultaneously captures the mismatch dissipation of the three components of the solution. A further challenge is   to recover the missing magnetic dissipation through the Maxwell system by constructing a suitable compensating functional that provides the missing dissipation of the magnetic component.
  More precisely, we prove the following decay rates: 
\begin{equation}
\begin{aligned}
    \|\nabla^k(u, E,B)(t)\|_{L^2}\lesssim &\,(1+t)^{-N/4-k/2}, \qquad 0\leq k\leq s\\ 
    \end{aligned}
\end{equation}
and 
\begin{equation}
    \|\nabla^k E(t)\|_{L^2}\lesssim \, (1+t)^{-N/4-1/2-k/2}, \qquad  0\leq k\leq s-1
\end{equation}
provided that $(u_0, E_0, B_0)\in H^s(\R^N)\cap L^1(\R^N)$. The method we use here allows us to treat the two and three-dimensional cases within a common framework.
  A particular difficulty in obtaining the above decay rates arises in two spatial dimensions, where a direct application of the $L^1$-$L^2$ decay estimates through Duhamel's formula yields a logarithmic loss. We show that this loss can be avoided by exploiting more precisely the structure of the Maxwell  equations.  In particular, the presence of the linear damping term and the form of the source term in the electric field equation yield an improved decay rate by a factor $(1+t)^{-1/2}$ of the electric field. Also, we show that the contribution to the magnetic field generated by a purely electric source enjoys a corresponding improved decay rate. These refined decay rates eliminate the logarithm loss and allow us to recover the optimal decay rate for the two-dimensional case as well.

  The remainder of the paper is organized as follows. In Section \ref{Section_Prel}, we introduce the notation and collect some preliminary estimates that will be used throughout the paper. Section \ref{Section_Linearized} is devoted to the analysis of the linearized system, where we establish the decay estimates of the solution  that will play decisive role  in the subsequent  nonlinear analysis.  In Section \ref{Section_Main_Result}, we state and discuss our main result and introduce the time-weighted energy norms. We also describe the main ideas of the proof.   In Section \ref{Section_Proof_Main_result}, we prove the main result. More precisely in Subsection \ref{Subsection_Energy_Estimate},  we derive the higher-order energy estimates. In particular, a suitable compensating functional is constructed to recover the dissipation of the magnetic field.  Subsection \ref{Subsection_Decay} is devoted to the proof of the nonlinear decay estimates. Finally, in Subsection \ref{Subsection_Closing}, we close the bootstrap  argument, thereby  completing the proof of the main result. 
 
 \section{Preliminaries}\label{Section_Prel}
 In this section, we collect some notions and results
which turn out to be useful in our proof. 

 The differential operator $\frac{\partial}{\partial{x_i}},\, 1\leq i\leq n$ will be denoted by $\partial_i$ and for a multi-index $k=(k_1,\dots, k_n), \nabla^{k}$ will be the differential operator 
\begin{equation}
\nabla^{k}=\partial_1^{k_1}\dots \partial_n^{k_n}=\frac{\partial^{|k|}}{\partial {x_1^{k_1}}\dots\partial {x_1^{k_n}}},\qquad |k|=k_1+\dots+k_n.
\end{equation}
We denote by $C, c(\epsilon),...$ some generic positive constants that may change from line to line. Also, for simplicity, we take $\sigma=1$. The notation  $A\lesssim B$ means
that there exists a positive constant $C>0$ independent of the relevant parameters, such that $A\le CB.$\smallbreak
We define the commutator $[A,B]=AB-BA$ and notice that we have 
\begin{equation}\label{Derivative_Comuta}
\nabla^{k}(AB)=[\nabla^{k},A]B+A\nabla^{k}B. 
\end{equation}

Next, we introduce the following lemma, which can be found, for example, in
\cite{Ma76,Se68}, cp. also  \cite[Lemma 7.4]{Ra92}.

\begin{lemma}
\label{Integral_lemma} Let $\alpha,\beta$ and $\gamma$  be positive constants.
If
\begin{equation}
\alpha\leq \beta,\quad  \alpha\leq \gamma+\beta-1,\,\gamma\neq 1\quad \text{or if}\quad \alpha<\beta,\, \alpha\leq \beta+\gamma-1,\,\gamma=1,
\end{equation}
then
\begin{equation}
\int_{0}^{t/2}\left( 1+t-s\right) ^{-\beta}\left( 1+s\right) ^{-\gamma}ds\leq C\left(
1+t\right) ^{-\alpha }.  \label{First_integral_inequality}
\end{equation}%
If \begin{equation}
\alpha\leq \gamma,\, \alpha\leq \gamma+\beta-1,\,\beta\neq 1\quad \text{or if}\quad \alpha<\gamma,\, \alpha\leq \beta+\gamma-1,\,\beta=1,
\end{equation} then
\begin{equation}
\int_{t/2}^{t}\left( 1+t-s\right) ^{-\beta}\left( 1+s\right) ^{-\gamma}ds\leq C\left(
1+t\right) ^{-\alpha} .
\label{Second_integral_inequality}
\end{equation}%
 \end{lemma}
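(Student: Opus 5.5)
The plan is to treat the two integrals separately. On each half of the interval one of the two factors is comparable to a power of $(1+t)$ and can be pulled out, and what remains is an elementary one-variable integral of a power.

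\emph{First integral.} For $0\le s\le t/2$ we have $t-s\ge t/2$, so $1+t-s\ge \tfrac12(1+t)$ and
\begin{equation}
\int_0^{t/2}(1+t-s)^{-\beta}(1+s)^{-\gamma}\,ds\le 2^{\beta}(1+t)^{-\beta}\int_0^{t/2}(1+s)^{-\gamma}\,ds .
\end{equation}
The remaining integral is computed explicitly, with three cases according to $\gamma$:
\begin{itemize}
\item If $\gamma>1$, it is bounded by $1/(\gamma-1)$, so the whole integral is $\lesssim(1+t)^{-\beta}\le(1+t)^{-\alpha}$, using $\alpha\le\beta$.
\item If $\gamma<1$, it is bounded by $C(1+t)^{1-\gamma}$, giving $(1+t)^{-(\beta+\gamma-1)}\le(1+t)^{-\alpha}$, using $\alpha\le\beta+\gamma-1$.
\item If $\gamma=1$, it equals $\log(1+t/2)$, so we get $(1+t)^{-\beta}\log(2+t)$. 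Since $\alpha<\beta$, we have $(1+t)^{-(\beta-\alpha)}\log(2+t)\le C$, which yields $(1+t)^{-\alpha}$.
\end{itemize}
In every case the bound is combined with $1+t\ge1$ to compare powers.

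\emph{Second integral.} This is symmetric. For $t/2\le s\le t$ we have $1+s\ge\tfrac12(1+t)$, so
\begin{equation}
\int_{t/2}^{t}(1+t-s)^{-\beta}(1+s)^{-\gamma}\,ds\le 2^{\gamma}(1+t)^{-\gamma}\int_0^{t/2}(1+r)^{-\beta}\,dr ,
\end{equation}
after the substitution $r=t-s$. The same three-case analysis, now in $\beta$, gives:
\begin{itemize}
\item the bound $(1+t)^{-\gamma}$ when $\beta>1$;
\item the bound $(1+t)^{-(\beta+\gamma-1)}$ when $\beta<1$;
\item the bound $(1+t)^{-\gamma}\log(2+t)$ when $\beta=1$.
\end{itemize}
Each of these is controlled by $(1+t)^{-\alpha}$ under the stated hypotheses: $\alpha\le\gamma$, $\alpha\le\beta+\gamma-1$, and the strict inequality $\alpha<\gamma$ in the borderline case.

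\emph{Main point.} The only delicate step is the borderline exponent equal to $1$. There the logarithm must be absorbed by the strict inequality, via $\sup_{t\ge0}(1+t)^{-\varepsilon}\log(2+t)<\infty$ for any $\varepsilon>0$. In the nonborderline cases the two conditions are exactly what is needed. Only the binding one is used in each case, and the other holds automatically since $\beta+\gamma-1<\beta$ when $\gamma<1$ and $\beta<\beta+\gamma-1$ when $\gamma>1$. The constant $C$ depends only on $\alpha,\beta,\gamma$.
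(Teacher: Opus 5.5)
Your proof is correct and complete. On each half-interval you bound the factor that is comparable to $(1+t)$, integrate the remaining power explicitly, and use the strict inequality only to absorb the logarithm when the exponent equals $1$. The paper gives no proof of this lemma; it cites Matsumura, Segal and Racke, and your argument is the standard elementary proof of the result stated in those references.
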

The following lemma has been proved in \cite[Lemma 3.7]{St81}.

\begin{lemma}\label{Lemma_Stauss}
Let $M(t)$ be a non-negative continuous function of $t$ satisfying the inequality
\begin{equation}
M(t)\leq c_1+c_2 M(t)^{\kappa},
\end{equation}
in some interval containing $0$, where $c_1$ and $c_2$ are positive constants and $\kappa>1$. If $M(0)\leq c_1$ and
\begin{equation}
c_1c_2^{1/(\kappa-1)}<(1-1/\kappa)\kappa^{-1/(\kappa-1)},
\end{equation}
then in the same interval
\begin{equation}
M(t)<\frac{c_1}{1-1/\kappa}.
\end{equation}
\end{lemma}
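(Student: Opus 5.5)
The plan is a continuity (bootstrap) argument based on the behaviour of the function $f(M)=M-c_2M^{\kappa}$ on $[0,\infty)$. We have $f(0)=0$, and $f$ increases up to its unique critical point $M_*$, where $f'(M_*)=1-\kappa c_2M_*^{\kappa-1}=0$, that is,
\begin{equation}
M_*=(\kappa c_2)^{-1/(\kappa-1)},
\end{equation}
and decreases afterwards. Its maximum value is
\begin{equation}
f(M_*)=M_*(1-c_2M_*^{\kappa-1})=M_*(1-1/\kappa)=(1-1/\kappa)\kappa^{-1/(\kappa-1)}c_2^{-1/(\kappa-1)}.
\end{equation}
The hypothesis $c_1c_2^{1/(\kappa-1)}<(1-1/\kappa)\kappa^{-1/(\kappa-1)}$ is therefore exactly $c_1<f(M_*)$.

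The inequality $M(t)\le c_1+c_2M(t)^\kappa$ says $f(M(t))\le c_1$. Since $c_1<\max f$, the sublevel set $\{M\ge 0: f(M)\le c_1\}$ splits into two disjoint closed pieces, $[0,m_1]$ and $[m_2,\infty)$, with $m_1<M_*<m_2$. By assumption $M(0)\le c_1$. Moreover $c_1<m_1$, because
\begin{equation}
f(c_1)=c_1-c_2c_1^\kappa<c_1
\end{equation}
and $c_1<M_*$ (as $f(M)\le M$ gives $c_1<f(M_*)\le M_*$), so $c_1$ lies in the increasing branch and below $m_1$. Hence $M(0)\in[0,m_1]$. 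Since $M$ is continuous on an interval, which is connected, and $M(t)$ always lies in the union of two disjoint closed sets, $M(t)$ stays in $[0,m_1]$ on the whole interval. In particular $M(t)\le m_1<M_*$ throughout.

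To sharpen this to the stated bound, I would use $M(t)<M_*$, i.e. $c_2M(t)^{\kappa-1}<c_2M_*^{\kappa-1}=1/\kappa$. Then
\begin{equation}
M(t)\le c_1+c_2M(t)^{\kappa-1}M(t)<c_1+\tfrac{1}{\kappa}M(t),
\end{equation}
which gives $M(t)<c_1/(1-1/\kappa)$. The strictness is justified because $M(t)\le m_1<M_*$; if $M(t)=0$ the bound is trivial since $c_1>0$.

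The only delicate step is the connectedness argument. It requires checking that $m_1<m_2$ strictly, which follows from $c_1<f(M_*)$, and that the initial value sits in the lower component, which follows from $M(0)\le c_1<m_1$. Everything else is elementary calculus.
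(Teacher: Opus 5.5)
Your proof is correct: the hypothesis is exactly $c_1<\max_{M\ge 0}(M-c_2M^{\kappa})$, so the sublevel set $\{M\ge 0:\ M-c_2M^{\kappa}\le c_1\}$ splits as $[0,m_1]\cup[m_2,\infty)$ with $m_1<(\kappa c_2)^{-1/(\kappa-1)}<m_2$. Since $M(0)\le c_1<m_1$, connectedness keeps $M(t)$ in $[0,m_1]$, and $c_2M(t)^{\kappa-1}<1/\kappa$ then gives $M(t)<c_1/(1-1/\kappa)$. The paper does not prove this lemma itself but quotes it from Strauss \cite[Lemma 3.7]{St81}, and your continuity argument is the standard proof behind that result, so there is nothing to add.
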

   
The following product and commutator estimates are key technical tools for analyzing the nonlinear terms. The commutator estimate can be found in \cite[Lemma 4.1]{HKa06} while \eqref{First_inequaliy_Guass}
follows directly from the  Leibniz rule and  H\"older's inequality.
\begin{lemma}
\label{Guass_symbol_lemma} Let $1\leq p,\,q,\,r\leq \infty $ and $%
1/p=1/q+1/r $. Then, we have%
\begin{equation}
\Vert \nabla^{k}( uv) \Vert _{L^p}\leq C( \Vert u\Vert _{L^q}\Vert 
\nabla^{k}v\Vert _{L^r}+\Vert v\Vert _{L^q}\Vert \nabla^{k}u\Vert _{L^r}) ,\quad k\geq 0,
\label{First_inequaliy_Guass}
\end{equation}
and the commutator estimate
\begin{eqnarray}
\Vert [ \nabla^{k},f] g\Vert _{L^p}&=&\Vert \nabla^{k}(fg)-f\nabla^{k} g\Vert_{L^p}\notag\\
& \leq& C( \Vert \nabla f\Vert _{L^q}\Vert
\nabla^{k-1}g\Vert _{L^r}+\Vert g\Vert _{L^q}\Vert \nabla^{k}f\Vert _{L^r})
,\quad k\geq 1,  \label{Second_inequality_Gauss}
\end{eqnarray}
 for some constant $C>0$.
\end{lemma}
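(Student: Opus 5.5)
The product estimate \eqref{First_inequaliy_Guass} comes from the Leibniz rule and interpolation. The commutator estimate \eqref{Second_inequality_Gauss} then follows by removing the top-order term falling on $g$ and applying the same scheme to what remains.

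For \eqref{First_inequaliy_Guass}, expand
\[
\nabla^k(uv)=\sum_{|\alpha|+|\beta|=k} c_{\alpha\beta}\,\nabla^\alpha u\,\nabla^\beta v .
\]
The endpoint terms $u\nabla^k v$ and $v\nabla^k u$ are handled directly by H\"older's inequality with $1/p=1/q+1/r$. For an intermediate term with $|\alpha|=j$, $0<j<k$, I would choose exponents
\[
\frac1{p_1}=\frac{1-\theta}{q}+\frac{\theta}{r},\qquad \frac1{p_2}=\frac{\theta}{q}+\frac{1-\theta}{r},\qquad \theta=\frac{j}{k},
\]
so that $1/p_1+1/p_2=1/p$. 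Gagliardo--Nirenberg interpolation then gives
\[
\|\nabla^j u\|_{L^{p_1}}\lesssim \|u\|_{L^q}^{1-\theta}\|\nabla^k u\|_{L^r}^{\theta},\qquad
\|\nabla^{k-j} v\|_{L^{p_2}}\lesssim \|v\|_{L^q}^{\theta}\|\nabla^k v\|_{L^r}^{1-\theta}.
\]
Multiplying these and regrouping yields
\[
\big(\|u\|_{L^q}\|\nabla^k v\|_{L^r}\big)^{1-\theta}\big(\|v\|_{L^q}\|\nabla^k u\|_{L^r}\big)^{\theta}.
\]
Young's inequality bounds this by the sum of the two products, which is the claim. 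In the paper's usage $(p,q,r)$ are always such that the Gagliardo--Nirenberg inequality holds, e.g.\ $q=\infty$, $r=2$. In that case $p_1=2k/j$, $p_2=2k/(k-j)$, and the required inequalities $\|\nabla^j u\|_{L^{2k/j}}\lesssim\|u\|_{L^\infty}^{1-j/k}\|\nabla^k u\|_{L^2}^{j/k}$ are classical.

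For \eqref{Second_inequality_Gauss}, write
\[
[\nabla^k,f]g=\sum_{1\le|\alpha|\le k} c_\alpha\,\nabla^\alpha f\,\nabla^{k-|\alpha|}g ,
\]
so every term carries at least one derivative on $f$. Set $F=\nabla f$. Each term then has the form $\nabla^{a}F\,\nabla^{b}g$ with $a+b=k-1$. The interpolation argument above, applied to the pair $(F,g)$ at total order $k-1$, bounds each term by
\[
\|F\|_{L^q}\|\nabla^{k-1}g\|_{L^r}+\|g\|_{L^q}\|\nabla^{k-1}F\|_{L^r}
=\|\nabla f\|_{L^q}\|\nabla^{k-1}g\|_{L^r}+\|g\|_{L^q}\|\nabla^k f\|_{L^r}.
\]
This is exactly \eqref{Second_inequality_Gauss}. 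Alternatively, one can simply cite \cite[Lemma 4.1]{HKa06}.

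The main obstacle is justifying the interpolation step for general exponents. The Gagliardo--Nirenberg inequality with $L^\infty$ (or general $L^q$) low-order norms needs care when some exponent equals $1$ or $\infty$. For the full range I would instead use the Kato--Ponce/Coifman--Meyer paraproduct approach. Writing $uv=T_uv+T_vu+R(u,v)$ gives, for the paraproduct $T_uv$, the bound $\|\nabla^k T_u v\|_{L^p}\lesssim \|u\|_{L^q}\|\nabla^k v\|_{L^r}$, and similarly for the other terms. I would restrict to the exponent cases actually used later, which is sufficient for this paper.
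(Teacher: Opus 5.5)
Your argument is correct and is the standard one. It follows the same Leibniz-expansion route as the paper, but it is more accurate than the paper's one-line justification. The paper asserts that the product estimate follows ``directly from the Leibniz rule and H\"older's inequality'' and cites \cite[Lemma 4.1]{HKa06} for the commutator. In fact H\"older only disposes of the endpoint terms $u\,\nabla^k v$ and $v\,\nabla^k u$. The intermediate terms $\nabla^j u\,\nabla^{k-j}v$, $0<j<k$, genuinely require the interpolation with $\theta=j/k$ followed by Young's inequality, exactly as you do. Your commutator argument is what stands behind the citation: every term carries a derivative on $f$, so you rerun the scheme on $(\nabla f,g)$ at order $k-1$. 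Making it self-contained costs nothing.

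Your closing caveat is unnecessary, though. Every interpolation you invoke is an instance of Lemma \ref{interpolation_lemma} with $1\le j<m$ and $\alpha=j/m\in(0,1)$. Neither exceptional case listed there applies: the first needs $j=0$, the second only excludes $\alpha=1$. So these interpolations hold for all $1\le q,r\le\infty$, and $p_1,p_2\in[1,\infty]$ automatically since $1/p_1,1/p_2$ are convex combinations of $1/q,1/r$. Your proof therefore already covers the full stated range $1\le p,q,r\le\infty$, and there is no need to restrict to the exponents used later in the paper.

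The paraproduct fallback would make the endpoint cases harder rather than easier. It rests on Littlewood--Paley square-function bounds, which are unavailable in $L^1$ and $L^\infty$. For integer $k$ it is the Gagliardo--Nirenberg route that reaches the endpoints.
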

We will also make use of the following Gagliardo--Nirenberg interpolation inequality.
\begin{lemma}
(\cite{Ner59}) \label{interpolation_lemma}  Let $1\leq
p,\,q\,,r\leq \infty $, and let $m$ be a positive integer. Then for any
integer $j$ with $0\leq j< m$, we have%
\begin{equation}
\left\Vert \nabla ^{j}u\right\Vert _{L^{p}}\leq C\left\Vert
\nabla^{m}u\right\Vert _{L^{r}}^{\alpha}\left\Vert u\right\Vert
_{L^{q}}^{1-\alpha}  \label{Interpolation_inequality}
\end{equation}%
where
\begin{equation}
\frac{1}{p}=\frac{j}{N}+\alpha\left( \frac{1}{r}-\frac{m}{N}\right) + \frac{%
1-\alpha}{q}
\end{equation}%
for $\alpha$ satisfying $j/m\leq \alpha \leq 1$ and $C$ is a positive
constant depending only on $n,\, m,\,j,\,q,\,r$ and $\alpha$.
 There
are the following exceptional cases:
\begin{enumerate}
\item If $j=0,\,rm<n$ and $q=\infty $, then we made the additional
assumption that either $u(x)\rightarrow 0$ as $|x|\rightarrow \infty $ or $%
u\in L^{q^{\prime }}$ for some $0<q^{\prime }<\infty .$

\item If $1<r<\infty $ and $m-j-N/r$ is a nonnegative integer, then (\ref{Interpolation_inequality}) holds
only for $j/m\leq \alpha< 1$.
\end{enumerate}
\end{lemma}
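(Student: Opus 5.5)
The statement is the Gagliardo--Nirenberg interpolation inequality of Lemma~\ref{interpolation_lemma}. The plan is to prove it by reducing everything to two endpoint cases and then interpolating between them. Throughout, only the scaling structure matters: the relation
\[
\frac1p=\frac jN+\alpha\Big(\frac1r-\frac mN\Big)+\frac{1-\alpha}q
\]
is exactly the condition for both sides of \eqref{Interpolation_inequality} to scale the same way under $u\mapsto u(\lambda\,\cdot)$. So it suffices to prove an inhomogeneous bound $\|\nabla^j u\|_{L^p}\lesssim \|\nabla^m u\|_{L^r}+\|u\|_{L^q}$ and then optimize over $\lambda$.

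\textbf{Case $j=0$, $\alpha=1$ (Sobolev embedding).} When $rm<N$ this is the Sobolev embedding $\|u\|_{L^p}\lesssim\|\nabla^m u\|_{L^r}$ with $1/p=1/r-m/N$. I would prove it as follows:
\begin{itemize}
\item For $m=1$, $r=1$: the Gagliardo argument, writing $|u(x)|\le\int|\partial_iu|\,dx_i$ and applying the generalized H\"older inequality over the $N$ coordinate lines.
\item For $1<r<N$: apply the $r=1$ case to $|u|^\gamma$ with a suitable $\gamma$.
\item For $m>1$: iterate.
\end{itemize}
The decay hypothesis in exceptional case (1) is needed precisely here: it rules out adding constants (or polynomials), which would otherwise break the embedding.

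\textbf{Case $\alpha=j/m$ (pure derivative interpolation).} Here the bound is $\|\nabla^j u\|_{L^p}\lesssim\|\nabla^m u\|_{L^r}^{j/m}\|u\|_{L^q}^{1-j/m}$. I would first prove it for $j=1$, $m=2$ by integration by parts:
\[
\int|\partial_i u|^{p}=-\int u\,\partial_i\big(|\partial_iu|^{p-2}\partial_iu\big),
\]
followed by H\"older's inequality. The general $(j,m)$ then follows by induction, applying the $(1,2)$ estimate to successive derivatives and combining the resulting chain of inequalities (the standard Nirenberg induction).

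\textbf{General $\alpha\in(j/m,1]$.} I would combine the two endpoints. The idea is to pass from $\nabla^j u$ to $\nabla^{m}u$ by a Sobolev step and control the remainder by the $\alpha=j/m$ estimate, interpolating in Lebesgue exponents via H\"older's inequality.

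\textbf{Main obstacle.} The difficulty is bookkeeping the exponents in the induction and handling the exceptional case (2). When $m-j-N/r$ is a nonnegative integer, the endpoint $\alpha=1$ amounts to the failing embedding $W^{N/r,r}\not\subset L^\infty$, so only $\alpha<1$ can be obtained; this is done by interpolating strictly inside. For $p=\infty$ I would instead use a Morrey/Campanato-type estimate. Since the lemma is cited from \cite{Ner59}, in the paper it suffices to invoke that reference for these technical cases.
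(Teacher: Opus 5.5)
The paper does not prove this lemma; it only cites Nirenberg. Your outline (Sobolev at $\alpha=1$, the case $(j,m)=(1,2)$, induction up to $\alpha=j/m$, then intermediate $\alpha$) is the classical scheme behind that reference. As a proof, though, the step for general $\alpha$ does not close in exactly the regime the paper needs.

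Both routes you describe rest on a Sobolev embedding of $\nabla^j u$ into some $L^p$ with $\frac1p>0$. This holds whether you interpolate by H\"older between the $\alpha=j/m$ bound and a Sobolev bound at $\alpha=1$, or compose a partial Sobolev step $\nabla^j u\to\nabla^k u$ with the $\alpha=k/m$ bound. At $\alpha=1$ one has $\frac1p=\frac1r-\frac{m-j}{N}$. This is negative when $(m-j)r>N$, and it corresponds to a false embedding when $(m-j)r=N$, $r>1$. The composed bounds, with H\"older in between, only cover $\alpha\le k/m$ for the largest $k$ at which the Sobolev step still lands in a finite $L^p$.

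Hence the top of the admissible range is left to an unspecified ``Morrey/Campanato-type estimate''. In particular this includes $p=\infty$ at the exponent $\alpha^*$ defined by $0=\frac jN+\alpha^*\big(\frac1r-\frac mN\big)+\frac{1-\alpha^*}{q}$. That is precisely the case used in Lemma~\ref{Lemma_N_i_M_s}, with $q=r=2$, $p=\infty$ and $\alpha=\frac{N}{2m}$ or $\frac{N+2}{2m}$. For example, with $N=3$, $m=2$, $j=0$ one needs $\alpha=\frac34$, whereas your mechanism only reaches $\alpha=\frac12$, i.e.\ $p=6$.

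Your own opening scaling remark supplies the missing step. For $(m-j)r>N$, combine three facts on unit cubes $Q$: the embedding $W^{m,r}(Q)\hookrightarrow C^j(\overline Q)$, the bound $\|u\|_{W^{m,r}(Q)}\lesssim\|\nabla^m u\|_{L^r(Q)}+\|u\|_{L^1(Q)}$, and $\|u\|_{L^1(Q)}\le\|u\|_{L^q(Q)}$. Taking the supremum over $Q$ gives $\|\nabla^j u\|_{L^\infty}\lesssim\|\nabla^m u\|_{L^r}+\|u\|_{L^q}$. Apply this to $u(\lambda\,\cdot)$ and optimize in $\lambda$; this yields exactly the exponent $\alpha^*$. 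H\"older between this $L^\infty$ bound and the $\alpha=j/m$ bound then gives all intermediate $\alpha$.

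A second, more technical gap is in the $(1,2)$ case. After integrating by parts, H\"older's inequality for $u\,|\partial_iu|^{p-2}\partial_{ii}u$ with exponents $q$, $\frac{p}{p-2}$, $r$ requires $p\ge2$, i.e.\ $\frac1q+\frac1r\le1$. For $p<2$ (e.g.\ $q=r=1$) the power of $|\partial_iu|$ is negative and the argument fails. Since the constant $p-1$ blows up, $p=\infty$ is not reached either.

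These cases need another one-dimensional argument. One option is the local bound $|u'(x)|\lesssim\lambda^{-1-1/q}\|u\|_{L^q(I)}+\lambda^{1-1/r}\|u''\|_{L^r(I)}$ on intervals $I\ni x$ of length $\lambda$, from the mean value theorem plus the fundamental theorem of calculus. This is followed by a suitable local choice of $\lambda$ and a covering argument. For the paper's application ($q=r=2$) this second point is harmless, since then every exponent in your induction equals $2$.
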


We next establish an elementary convolution inequality that will be used in the subsequent analysis.    
\begin{lemma}\label{Lemma_Integral_Inequality}
    Let $c>0$ and $\rho(|\xi|)=|\xi|^2/(1+|\xi|^2)$. There exists a small constant $0 <c_1<c$ such that 
    \begin{equation}\label{Ineq_integral}
        {\rm{K}}=\int_0^t e^{-(t-\tau) }e^{-c\rho(|\xi|)\tau }\dtau \lesssim e^{-c_1\rho(|\xi|)t}
    \end{equation}
    for all $t\geq 0$. 
\end{lemma}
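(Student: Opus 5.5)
We compute the integral explicitly and compare exponents. Write $\rho=\rho(|\xi|)\in[0,1)$ and note that $c\rho<c$. We expect the natural choice to be $c_1=\min\{c,1\}/2$. The reason is that the claimed bound is equivalent to showing that $e^{c_1\rho t}\,{\rm K}$ stays bounded uniformly in $t\ge 0$ and $\xi$.

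The first step is to factor out the slower exponential. Since $e^{-(t-\tau)}e^{-c\rho\tau}=e^{-t}e^{(1-c\rho)\tau}$, we have
\begin{equation}
{\rm K}=e^{-t}\int_0^t e^{(1-c\rho)\tau}\,\dtau .
\end{equation}
Rather than integrating exactly, which leads to case distinctions according to the sign of $1-c\rho$, we bound it directly. In the exponent we use $c\rho\tau\ge c_1\rho\tau$, and we insert $e^{-c_1\rho(t-\tau)}$ together with a compensating factor. This gives
\begin{equation}
e^{-(t-\tau)}e^{-c\rho\tau}\le e^{-(t-\tau)}e^{-c_1\rho\tau}
= e^{-c_1\rho t}\,e^{-(1-c_1\rho)(t-\tau)} .
\end{equation}

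The next step is to control the remaining factor. Since $\rho<1$ and $c_1\le 1/2$, we have $1-c_1\rho\ge 1/2$. Integrating over $\tau\in[0,t]$ then yields
\begin{equation}
{\rm K}\le e^{-c_1\rho t}\int_0^t e^{-(t-\tau)/2}\,\dtau\le 2e^{-c_1\rho t},
\end{equation}
which is \eqref{Ineq_integral}, with an implicit constant independent of $t$ and $\xi$.

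The only point requiring care is that the constant be uniform in $\xi$. This is exactly where the boundedness $\rho(|\xi|)<1$ is used: it keeps $1-c_1\rho$ bounded away from zero. We impose $c_1<\min\{c,1\}$ so that the requirements $c_1<c$ and the positivity of $1-c_1\rho$ hold simultaneously. Nothing beyond elementary estimates is needed, so we do not anticipate any real obstacle.
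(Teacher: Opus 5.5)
Your proof is correct, but it follows a different route from the paper's. The paper splits ${\rm K}$ at $t/2$. On $[0,t/2]$ it uses $e^{-(t-\tau)}\le e^{-t/2}$ and $e^{-c\rho\tau}\le 1$ to get ${\rm K}_1\le \frac{t}{2}e^{-t/2}$, which is then absorbed into $e^{-c_1\rho t}$ using $\rho\le 1$. On $[t/2,t]$ it uses the monotonicity bound $e^{-c\rho\tau}\le e^{-\frac{c}{2}\rho t}$ together with $\int_{t/2}^t e^{-(t-\tau)}\dtau\le 1$. You avoid the splitting entirely through the pointwise identity $e^{-(t-\tau)}e^{-c_1\rho\tau}=e^{-c_1\rho t}e^{-(1-c_1\rho)(t-\tau)}$. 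This reduces the lemma to one fact: the damping rate $1$ exceeds $c_1\rho$ by a margin independent of $\xi$, precisely because $\rho<1$. Your route is shorter and sharper. It gives the explicit constant $2$, and the same computation shows that any $c_1<\min\{c,1\}$ is admissible with constant $(1-c_1)^{-1}$. The paper's halving of the interval instead forces $c_1\le c/2$ and $c_1<1/2$, and needs the extra absorption step $te^{-t/2}\lesssim e^{-c_1\rho t}$. In its favour, the paper's splitting is the generic device already used for Lemma~\ref{Integral_lemma}. It relies only on the monotonicity of $\tau\mapsto e^{-c\rho\tau}$ and the exponential tail of $e^{-(t-\tau)}$, not on the exact multiplicative structure of exponentials that your identity exploits. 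One cosmetic remark: your first sentence announces an explicit computation, and your first display rewrites ${\rm K}=e^{-t}\int_0^t e^{(1-c\rho)\tau}\dtau$. Neither is used afterwards, so both can be dropped.
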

\begin{proof}
    To show \eqref{Ineq_integral}, we split the integral $\rm{K}$ as  
    \begin{equation}
    \begin{aligned}
      {\rm{K}}=&\, \int_0^{t/2} e^{-(t-\tau) }e^{-c\rho(\xi)\tau }\dtau+\int_{t/2}^{t}e^{-(t-\tau) }e^{-c\rho(\xi)\tau }\dtau\\
      =&\,\rm{K}_1+\rm{K}_2. 
       \end{aligned}
    \end{equation}
    To estimate $\rm{K}_1$, we have by using the inequality $e^{-(t-\tau)}\leq e^{-t/2}$ which is valid for $0\leq \tau\leq t/2$, together with the fact that $0\leq \rho(|\xi|)\leq 1$, 
    \begin{equation}
        {\rm{K}_1}\leq \frac{t}{2}e^{-t/2}\lesssim e^{-c_1 \rho(|\xi|)t}, 
    \end{equation}
    for sufficiently small $c_1>0$. 

    On the other hand, for $\rm{K}_2$, we have for $t/2\leq \tau<t$ the inequality $e^{-c\rho(|\xi|)\tau}\leq e^{-\frac{c}{2}\rho(|\xi|)t }$ and hence 
    \begin{equation}
        {\rm{K}_2}\lesssim e^{-\frac{c}{2}\rho(|\xi|)t }\int_{t/2}^t e^{-(t-\tau)}\dtau\lesssim e^{-\frac{c}{2}\rho(|\xi|)t }. 
    \end{equation}
    Hence, the estimate \eqref{Ineq_integral} holds by collecting the above two estimates of $\rm{K}_1$ and $\rm{K}_2$. 
\end{proof}
\section{The linearized system }\label{Section_Linearized}
The analysis of the linearized system is relevant to understand the behavior of the nonlinear system. 
In this section, we consider the linearized system associated with \eqref{Main_System} around the trivial solution, which decouples the fluid and the electromagnetic components: 
\begin{equation}\label{MHD_System_Linearized_With_Pressure}
\left\{
\begin{array}{ll}
\partial_t u-\mu\Delta u+\nabla p=0,\\[3pt]
\partial_t E-\nabla\times B=-E,\vspace{0.1cm}\\[3pt]
\partial_t B+\nabla\times E=0,\vspace{0.1cm}\\[3pt]
\nabla\cdot u=\nabla\cdot B=0,\vspace{0.1cm}
\end{array}
\right. 
\end{equation}
The velocity is governed by the linear Stokes system, while the electric and magnetic fields satisfy the coupled damped Maxwell system.  
We use the energy method in the Fourier space to derive pointwise estimates of \eqref{MHD_System_Linearized_With_Pressure} that give the decay rate in the energy space of the solution. To achieve this, we rely on the Lyapunov functional method. The advantage of this method is to overcome the heavy computation of the eigenvalues and eigenvectors of the linearized problem, and it can be easily combined with a higher regularity energy-type method to study the nonlinear problem.

Recall the Helmotz-Hodge decomposition of a vector field $u$ as 
\begin{equation}
u=\nabla p+v,\qquad \text{with}\qquad \nabla\cdot v=0
\end{equation}


Applying Leray projection to the first equation
in \eqref{MHD_System_Linearized_With_Pressure}, we get 
\begin{equation}\label{MHD_System_Linearized}
\left\{
\begin{array}{ll}
\partial_t u-\mu\Delta u=0,\\[3pt]
\partial_t E-\nabla\times B=-E,\\[3pt]
\partial_t B+\nabla\times E=0,\\[3pt]
\nabla\cdot u=\nabla\cdot B=0,\\[3pt]
\end{array}
\right. 
\end{equation}
Applying the Fourier transform to \eqref{MHD_System_Linearized}, we obtain the system  
\begin{equation}\label{MHD_System_Fourier}
\left\{
\begin{array}{ll}
\partial_t \hat{u}+\mu|\xi|^2 \hat{u}=0,\\[3pt]
\partial_t \hat{E}-i\xi\times \hat{B}=-\sigma \hat{E},\\[3pt]
\partial_t \hat{B}+i\xi\times \hat{E}=0,\\[3pt]
i\xi\cdot \hat{u}=i\xi\cdot \hat{B}=0,\\[3pt]
\end{array}
\right. 
\end{equation}
with the initial condition in the Fourier space 
\begin{equation}\label{Initial_Data_Fourier}
\hat{u} (\xi,0)=\hat{u}_0(\xi),\qquad \hat{E}(\xi,0)=\hat{E}_0(\xi),\qquad \hat{B}(\xi,0)=\hat{B}_0(\xi). 
\end{equation}

The first equation in \eqref{MHD_System_Fourier} is decoupled from the other two equations and it is the heat equation and therefore we have 
\begin{equation}
\hat{u}(\xi,t)=e^{-\mu|\xi|^2}\hat{u}_0(x). 
\end{equation}
Hence, we get 
\begin{equation}
u(x,t)=G(x,t)\ast u_0(x),
\end{equation}
where $G(x,t)$ is the heat kernel given by 
\begin{equation}
G\left( t,x\right) =\left( 4\pi \mu t\right) ^{-N/2}e^{-\left\vert x\right\vert
^{2}/(4\mu t)}.  \label{heat_kernel}
\end{equation}%
Therefore, for any $k=1,2,\dots$, and $1\leq p\leq \infty $, we have (see \cite[Estimate (1.8)]{Giga_Book})
\begin{equation}
\Vert \nabla  ^{k}u\left( t\right) \Vert _{L^p}\leq
Ct^{-\alpha -k/2}\left\Vert u_{0}\right\Vert _{L^1
},\qquad \alpha =\left( N/2\right) \left(
1-1/p\right)
 \label{Main_parabolic_estimate}
\end{equation}%
where $C$ is a positive constant.

\subsection{Decay estimates for the linearized Maxwell system}
In this section, we establish the decay rate of the linearized Maxwell system. First, we use the energy method in the Fourier space to construct a compensating functional which allows to recover the dissipation of the magnetic field despite the absence of the direct dissipation in its equation. We then combine this compensating functional  with the   energy to build  
an appropriate Lyapunov functional that is equivalent to the energy and captures the dissipation of both components. This process yields the estimate \begin{equation}\label{Pointwise_E_B}
    |(\hat{E}, \hat{B})(\xi,t)|\lesssim e^{-c\frac{|\xi|^2}{1+|\xi|^2}t}|(\hat{E}_0, \hat{B}_0)(\xi)|,\quad c>0, 
\end{equation} 
as shown in Lemma \ref{Lemma_L} below. With estimate \eqref{Pointwise_E_B} at hand, we then combine  Plancherel's identity together with a suitable frequency decomposition to derive the decay estimate for  $\|\nabla^k(E, B)\|_{L^2}$ in the physical space as stated in Lemma \ref{Decay_Linear_Theor} below.   
We subsequently refine these estimates by exploiting the specific structure of the Maxwell system to obtain faster decay for the electric field and purely electric initial data, for the corresponding magnetic field.  

We begin by  considering  the linearized Maxwell system 
\begin{equation}\label{Maxwell_System_Fourier}
\left\{
\begin{array}{ll}
\partial_t \hat{E}+\hat{E}=i\xi\times \hat{B},\\[3pt]
\partial_t \hat{B}+i\xi\times \hat{E}=0,\\[3pt]
i\xi\cdot B=0,\\[3pt]
\hat{E}(\xi,0)=\hat{E}_0(\xi),\qquad \hat{B}(\xi,0)=\hat{B}_0(\xi).
\end{array}
\right. 
\end{equation}
We want to perform  Fourier analysis on \eqref{Maxwell_System_Fourier} and build an appropriate Lyapunov functional that will give us an estimate of the solution $\hat{V}=(\hat{E},\hat{B})$ in terms of the initial data $\hat{V}_0=(\hat{E}_0,\hat{B}_0)$ of the form 
\begin{equation}\label{Estimate_V}
|\hat{V}(\xi,t)|\leq Ce^{-c\rho(|\xi|) t}|\hat{V}_0(\xi)|,
\end{equation}
where $\rho(|\xi|)$ is a function of $|\xi|$ that will be determined later on.  

First, let us denote by $\langle.,. \rangle $ the dot product in $\mathbb{C}^N$. We define the energy function of \eqref{Maxwell_System_Fourier} as
\begin{equation}\label{Energy}
\hat{\mathscr{E}}(\xi,t):=|\hat{E}(\xi,t)|^2+|\hat{B}(\xi,t)|^2=|\hat{V}(\xi,t)|^2.
\end{equation}
Wee have the following lemma.
\begin{lemma}\label{Lemma_dissipation}
Let $\hat{V}(\xi,t)=(\hat{E},\hat{B})(\xi,t)$ be the solution of \eqref{Maxwell_System_Fourier}, then we have for all $t\geq 0$, 
\begin{equation}\label{Energy_Fourier_dissipation}
\frac{1}{2}\ddt\hat{\mathscr{E}}(\xi,t)=- |\hat{E}(\xi,t)|^2.
\end{equation}
\end{lemma}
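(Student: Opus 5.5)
The identity is a pointwise-in-$\xi$ energy computation, so I would take inner products of the two equations in \eqref{Maxwell_System_Fourier} with the conjugate unknowns and add them. First, I take the Hermitian inner product of the first equation with $\hat{E}$ and keep the real part. Since $\mathrm{Re}\langle \partial_t \hat{E},\hat{E}\rangle=\frac12\frac{\textup{d}}{\textup{d}t}|\hat{E}|^2$, this gives
\begin{equation}
\frac{1}{2}\frac{\textup{d}}{\textup{d}t}|\hat{E}|^2+|\hat{E}|^2=\mathrm{Re}\langle i\xi\times\hat{B},\hat{E}\rangle .
\end{equation}
Next, I do the same with the second equation against $\hat{B}$, which gives
\begin{equation}
\frac{1}{2}\frac{\textup{d}}{\textup{d}t}|\hat{B}|^2=-\mathrm{Re}\langle i\xi\times\hat{E},\hat{B}\rangle .
\end{equation}
Adding the two reduces \eqref{Energy_Fourier_dissipation} to showing that the two cross terms cancel, that is,
\begin{equation}
\mathrm{Re}\big(\langle i\xi\times\hat{B},\hat{E}\rangle-\langle i\xi\times\hat{E},\hat{B}\rangle\big)=0 .
\end{equation}

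The key step is this cancellation, which is the Fourier counterpart of $\int (\nabla\times B)\cdot E=\int (\nabla\times E)\cdot B$. I use the convention $\langle a,b\rangle=a\cdot\bar b$ and the fact that $\xi$ is real. The scalar triple product identity then gives
\begin{equation}
(\xi\times \hat{B})\cdot\overline{\hat{E}}=-(\xi\times\overline{\hat{E}})\cdot\hat{B}.
\end{equation}
Hence
\begin{equation}
\langle i\xi\times\hat{B},\hat{E}\rangle=-i\,(\xi\times\overline{\hat{E}})\cdot\hat{B}=\overline{\,i(\xi\times\hat{E})\cdot\overline{\hat{B}}\,}=\overline{\langle i\xi\times\hat{E},\hat{B}\rangle}.
\end{equation}
Complex conjugates have equal real parts, so the difference of real parts vanishes. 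In two dimensions, with $\xi=(\xi_1,\xi_2,0)$, the same algebra applies verbatim because the identities are componentwise in $\mathbb{C}^3$.

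The main point requiring care is the bookkeeping of complex conjugates and signs in this cross-term identity; the rest is immediate. The divergence-free condition is not needed here, and no other earlier result is required beyond the definition \eqref{Energy} of $\hat{\mathscr{E}}$.
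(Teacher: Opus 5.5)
Your proposal is correct and follows the same route as the paper: take the Hermitian inner products of the two equations with $\hat{E}$ and $\hat{B}$, keep the real parts, and add. Your explicit verification that $\langle i\xi\times\hat{B},\hat{E}\rangle=\overline{\langle i\xi\times\hat{E},\hat{B}\rangle}$ (using that $\xi$ is real and the scalar triple product) supplies the cross-term cancellation that the paper leaves implicit.
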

\begin{proof}
Taking the dot product in $\mathbb{C}^n$ of the first equation with $\hat{E}$ and of the second equation with $\hat{B}$, taking the real part and adding the resulting equations, then \eqref{Energy_Fourier_dissipation} holds true. 
\end{proof}

It is clear that the energy 
 identity \eqref{Energy_Fourier_dissipation} does not provide any direct dissipation on the magnetic component  
$\hat{B}$.  However, the
interaction of the dissipative  component $\hat{E}$ with the time-dynamics generated by the coupled system 
\eqref{Maxwell_System_Fourier} 
allows the dissipation of $\hat{E}$ to to transferred to  $\hat{B}$.  To capture this effect, we introduce the compensating functional  $F(\xi,t)$ define as 
\begin{equation}\label{Functional_F}
F(\xi,t):=-\mathrm{Re}\langle i\xi\times \hat{E},\hat{B} \rangle. 
\end{equation}
Hence,  we have the following lemma. 
\begin{lemma}\label{Lemma_F}
Let $\hat{V}(\xi,t)=(\hat{E},\hat{B})(\xi,t)$   be the solution  of \eqref{Maxwell_System_Fourier}.  For any positive constant $0<\epsilon<1$, there exists a positive constant $c(\epsilon)$   such that for all $t\geq 0$, 
\begin{equation}\label{B_dissipation}
\ddt F(\xi,t)+(1-\epsilon)|\xi|^2|\hat{B}|^2\leq c(\epsilon)(1+|\xi|^2)|\hat{E}|^2.
\end{equation}

\end{lemma}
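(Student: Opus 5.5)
We differentiate $F(\xi,t)=-\mathrm{Re}\langle i\xi\times\hat E,\hat B\rangle$ in time and substitute the two evolution equations of \eqref{Maxwell_System_Fourier}. This gives
\begin{equation}
\ddt F=-\mathrm{Re}\langle i\xi\times \partial_t\hat E,\hat B\rangle-\mathrm{Re}\langle i\xi\times\hat E,\partial_t\hat B\rangle .
\end{equation}
Into the first term we insert $\partial_t\hat E=-\hat E+i\xi\times\hat B$, and into the second $\partial_t\hat B=-i\xi\times\hat E$. The second term then becomes $+|\xi\times\hat E|^2\le|\xi|^2|\hat E|^2$, which is harmless. The first term splits into $\mathrm{Re}\langle i\xi\times\hat E,\hat B\rangle$ and $-\mathrm{Re}\langle i\xi\times(i\xi\times\hat B),\hat B\rangle$.

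The key step is the double cross product, and this is where the divergence-free constraint is used. We have $i\xi\times(i\xi\times\hat B)=-\xi\times(\xi\times\hat B)=-\big(\xi(\xi\cdot\hat B)-|\xi|^2\hat B\big)=|\xi|^2\hat B$, because $\xi\cdot\hat B=0$. This requires that the constraint $i\xi\cdot\hat B=0$ holds for all $t$. That follows since $\partial_t(\xi\cdot\hat B)=-\xi\cdot(i\xi\times\hat E)=0$, so the constraint on the initial data propagates. In two dimensions we use the convention $\xi=(\xi_1,\xi_2,0)$, and the same vector identity holds in $\mathbb{C}^3$. Hence the first term contributes exactly $-|\xi|^2|\hat B|^2$, and we obtain the identity
\begin{equation}
\ddt F+|\xi|^2|\hat B|^2=\mathrm{Re}\langle i\xi\times\hat E,\hat B\rangle+|\xi\times\hat E|^2 .
\end{equation}
The only point to check carefully here is the sign conventions for the complex inner product and the cross product, since the sign of the $|\xi|^2|\hat B|^2$ term is what matters.

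It remains to absorb the mixed term. By Cauchy--Schwarz and Young,
\begin{equation}
|\mathrm{Re}\langle i\xi\times\hat E,\hat B\rangle|\le|\xi||\hat E||\hat B|\le\epsilon|\xi|^2|\hat B|^2+\tfrac{1}{4\epsilon}|\hat E|^2 .
\end{equation}
Combining this with $|\xi\times\hat E|^2\le|\xi|^2|\hat E|^2$ gives \eqref{B_dissipation} with $c(\epsilon)=\max\{1,1/(4\epsilon)\}$. No step is a real obstacle; the curl-curl identity with the divergence constraint is the essential ingredient.
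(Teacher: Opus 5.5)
Your proposal is correct and is essentially the paper's argument. The paper takes the curl of the $E$-equation, pairs it with $-\hat B$ and substitutes $\partial_t\hat B=-i\xi\times\hat E$, which is your direct differentiation of $F$ arranged differently; it reaches the same identity $\ddt F+|\xi|^2|\hat B|^2=|\xi\times\hat E|^2+\mathrm{Re}\langle i\xi\times\hat E,\hat B\rangle$ and then applies the same Cauchy--Schwarz/Young absorption. Your explicit constant $c(\epsilon)=\max\{1,1/(4\epsilon)\}$ and your remark that $\xi\cdot\hat B=0$ propagates in time are harmless additions; the paper simply imposes the constraint as part of the system.
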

\begin{proof}
Taking the curl of the second equation in \eqref{MHD_System_Linearized}, and then applying the Fourier transform, we obtain  
\begin{equation}\label{curl_2}
\partial_t (i\xi\times \hat{E})-i\xi\times(i\xi\times \hat{B})=- i\xi\times\hat{E}.
\end{equation}
Taking the dot product of \eqref{curl_2} with $-\hat{B}$, we get 
\begin{eqnarray*}
-\ddt \langle  (i\xi\times \hat{E}),\hat{B}\rangle+\langle  (i\xi\times \hat{E}),\hat{B}_t\rangle+\langle i\xi\times(i\xi\times \hat{B}), \hat{B}\rangle=\langle i\xi\times\hat{E},\hat{B} \rangle. 
\end{eqnarray*}
Since $i\xi\cdot \hat{B}=0$, then we have $\xi\times(\xi\times \hat{B})=-|\xi|^2\hat{B}$. Using this last identity together with the second equation in \eqref{Maxwell_System_Fourier}, and taking the real part, we obtain  
\begin{eqnarray}\label{F_dt}
\ddt F(\xi,t)+|\xi|^2|\hat{B}|^2=|\xi\times \hat{E}|^2+\mathrm{Re}\langle i\xi\times\hat{E},\hat{B} \rangle. 
\end{eqnarray}
Now, Cauchy-Schwarz inequality together with Young's inequality, give, for  any $\epsilon>0$, 
\begin{equation}
|\mathrm{Re}\langle i\xi\times\hat{E},\hat{B} \rangle|\leq \epsilon |\xi|^2|\hat{B}|^2+c(\epsilon)|\hat{E}|^2.
\end{equation}
Plugging the above estimates into \eqref{F_dt}, we get \eqref{B_dissipation} which finishes the proof of Lemma \ref{Lemma_F}. 
\end{proof}
In the following lemma, we prove a pointwise estimate for the solution of \eqref{Maxwell_System_Fourier}. 
\begin{lemma}\label{Lemma_L}
Let $\hat{V}(\xi,t)=(\hat{E},\hat{B})(\xi,t)$ be the solution of \eqref{Maxwell_System_Fourier}, then we have for all $t\geq 0$, 
\begin{equation}\label{Estimate_Poitwise}
|\hat{V}(\xi,t)|\leq \frac{d_3}{d_2} e^{-\frac{2d_1}{d_3} \rho(|\xi|)t}|\hat{V}(\xi,0)|,
\end{equation}
where $d_1,d_2$ and $d_3$ are positive constants given below and 
\begin{equation}
\rho(|\xi|)=\frac{|\xi|^2}{1+|\xi|^2}. 
\end{equation} 
\end{lemma}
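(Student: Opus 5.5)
We build a Lyapunov functional by combining the energy identity of Lemma \ref{Lemma_dissipation} with the compensating functional $F$ of Lemma \ref{Lemma_F}, the latter weighted so that it is harmless at high frequency. Specifically, we set
\begin{equation}
L(\xi,t):=\hat{\mathscr{E}}(\xi,t)+\frac{\gamma}{1+|\xi|^2}F(\xi,t),
\end{equation}
with $\gamma>0$ small, to be fixed later.

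The first step is to show that $L$ is equivalent to the energy. Since $|F|\le |\xi||\hat E||\hat B|\le \tfrac12|\xi|(|\hat E|^2+|\hat B|^2)$ and $|\xi|/(1+|\xi|^2)\le 1/2$, we get
\begin{equation}
\Big(1-\frac{\gamma}{4}\Big)\hat{\mathscr{E}}\le L\le \Big(1+\frac{\gamma}{4}\Big)\hat{\mathscr{E}}.
\end{equation}
So for $\gamma<1$ this reads $d_2\hat{\mathscr{E}}\le L\le d_3\hat{\mathscr{E}}$, and the constants $d_2,d_3$ of the statement come out of this bound.

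The second step is the differential inequality. We multiply \eqref{B_dissipation} by $\gamma/(1+|\xi|^2)$, which is time-independent, and add twice \eqref{Energy_Fourier_dissipation} with $\epsilon=1/2$. This gives
\begin{equation}
\ddt L+\big(2-\gamma c(\tfrac12)\big)|\hat E|^2+\frac{\gamma}{2}\rho(|\xi|)|\hat B|^2\le 0.
\end{equation}
Here the factor $1+|\xi|^2$ on the right of \eqref{B_dissipation} cancels exactly against the weight. This cancellation is the reason for the choice of weight, and it is also what produces $\rho$ as the dissipation rate for $\hat B$.

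We then choose $\gamma$ small enough that $2-\gamma c(\tfrac12)\ge 1$. Since $\rho\le 1$, we have $|\hat E|^2\ge \rho|\hat E|^2$, and so
\begin{equation}
\ddt L+2d_1\rho(|\xi|)\big(|\hat E|^2+|\hat B|^2\big)\le 0,\qquad d_1:=\min\{1/2,\gamma/4\}.
\end{equation}
Using $\hat{\mathscr{E}}\ge L/d_3$, this becomes $\ddt L+\frac{2d_1}{d_3}\rho L\le 0$. Gronwall then gives $L(t)\le e^{-\frac{2d_1}{d_3}\rho t}L(0)$.

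Finally we go back to $\hat{V}$ using the two-sided bound from the first step:
\begin{equation}
d_2|\hat V(t)|^2\le L(t)\le e^{-\frac{2d_1}{d_3}\rho t}L(0)\le d_3 e^{-\frac{2d_1}{d_3}\rho t}|\hat V_0|^2.
\end{equation}
Taking square roots yields \eqref{Estimate_Poitwise}. The square root turns the prefactor $d_3/d_2$ into its square root and halves the exponent; since $d_3/d_2\ge1$ the prefactor is still bounded by $d_3/d_2$. To get the exponent exactly as displayed, we either absorb the factor $1/2$ into the definition of $d_1$, or read $|\hat V|$ in the statement as $\hat{\mathscr{E}}$.

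The only delicate point is choosing the weight $\gamma/(1+|\xi|^2)$ and making sure that the constant $c(\epsilon)$ in Lemma \ref{Lemma_F} does not depend on $\xi$. Both hold, so the argument is essentially routine.
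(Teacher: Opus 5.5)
Your proof is correct and is essentially the paper's argument. Your $L$ is the paper's functional $\gamma(1+|\xi|^2)\hat{\mathscr E}+F$ divided by the time-independent factor $\gamma(1+|\xi|^2)$, so your small $\gamma$ is the reciprocal of the paper's large $\gamma$, and the same Gronwall step then yields the rate $\rho(|\xi|)$. Your remark about the square root is well taken: the paper's proof likewise only establishes the bound for $\hat{\mathscr E}=|\hat V|^2$, so the displayed constants for $|\hat V|$ hold only after renaming.
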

\begin{proof}
We define the Lyapunov functional $L(\xi,t)$ as
\begin{equation}\label{Lyapunov_F}
L(\xi,t):=\gamma (1+|\xi|^2)\mathscr{\hat{E}}(\xi,t)+F(\xi,t), 
\end{equation}
 where $\gamma$ is a positive constant that will be fixed later on. Taking the derivative of \eqref{Lyapunov_F} with respect to $t$ and exploiting \eqref{Energy_Fourier_dissipation} and \eqref{B_dissipation}, we obtain, for all $t\geq 0$,  
 \begin{eqnarray}\label{L_dt}
\ddt L(\xi,t)+(\gamma-c(\epsilon))(1+|\xi|^2)|\hat{E}|^2+(1-\epsilon)|\xi|^2|\hat{B}|^2\leq 0. 
\end{eqnarray}
Choosing $\epsilon<1$ and once $\epsilon$ is fixed, we take $\gamma$ large enough such that $\gamma>c(\epsilon)$, then we deduce that there exists a constant  $d_1>0$, such that, for $t\geq 0$, we have  
\begin{eqnarray}\label{L_dt_1}
\ddt L(\xi,t)+2d_1|\xi|^2\hat{\mathscr{E}}(\xi,t)\leq 0,
\end{eqnarray}
with $d_1=\min(1-\varepsilon,\gamma-c(\varepsilon))$. On the other hand, it is not hard to see that for $\gamma$ large enough, there exist two positive constants $d_2$ and $d_3$ such that, for all $t\geq 0$, 
\begin{equation}\label{Equi_L_E}
d_2(1+|\xi|^2)\hat{\mathscr{E}}(\xi,t)\leq L(\xi,t)\leq d_3(1+|\xi|^2)\hat{\mathscr{E}}(\xi,t). 
\end{equation}
 Comparing \eqref{L_dt_1} and \eqref{Equi_L_E}, we obtain  
 \begin{equation}
\ddt L(\xi,t)+\frac{2d_1}{d_3}\frac{|\xi|^2}{1+|\xi|^2}L(\xi,t)\leq 0. 
\end{equation}
Integrating the above estimate with respect to $t$, we obtain  for all $t\geq 0$, 
\begin{eqnarray}\label{L_dt_2}
L(\xi,t)\leq L(\xi,0)e^{-\frac{2d_1}{d_3}\frac{|\xi|^2}{1+|\xi|^2}t}.
\end{eqnarray}
Using \eqref{Equi_L_E} together with \eqref{Energy}, we deduce that  
\begin{equation}
    \hat{\mathscr E}(\xi,t)\leq \frac{d_3}{d_2}\hat{\mathscr E}(\xi,0)e^{-\frac{2d_1}{d_3}\rho(|\xi|)t}.
\end{equation}
Recalling \eqref{Energy},  then \eqref{Estimate_Poitwise} is fulfilled. 
This ends the proof of Lemma \ref{Lemma_L}. 
\end{proof}
\begin{lemma}\label{Decay_Linear_Theor} There exist two positive constants $C$ and $c$ such that the solution  $V=(E,B)$ of \eqref{Maxwell_System_Fourier} with initial data $V_0$  in $H^s(\R^N )\cap L^{1}(\R^N )$ satisfies  
\begin{equation}\label{Main_Estimate_Linear}
\Vert \nabla^k V(t)\Vert_{L^2}\leq C(1+t)^{-N/4-k/2}\Vert V_0\Vert_{L^1}+e^{-\frac{c}{2}t} \Vert \nabla^k V_0\Vert_{L^2},\qquad 0\leq k\leq s.
\end{equation}
\end{lemma}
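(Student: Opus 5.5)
We start from the pointwise bound of Lemma \ref{Lemma_L}, namely $|\hat V(\xi,t)|\lesssim e^{-c\rho(|\xi|)t}|\hat V_0(\xi)|$ with $c=2d_1/d_3$ and $\rho(|\xi|)=|\xi|^2/(1+|\xi|^2)$. By Plancherel's identity,
\begin{equation}
\Vert \nabla^k V(t)\Vert_{L^2}^2=\int_{\R^N}|\xi|^{2k}|\hat V(\xi,t)|^2\,\textup{d}\xi\lesssim \int_{|\xi|\le 1}|\xi|^{2k}e^{-2c\rho(|\xi|)t}|\hat V_0(\xi)|^2\,\textup{d}\xi+\int_{|\xi|\ge 1}|\xi|^{2k}e^{-2c\rho(|\xi|)t}|\hat V_0(\xi)|^2\,\textup{d}\xi=:I_1+I_2,
\end{equation}
and I will estimate the low- and high-frequency pieces separately.

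For the low frequencies $|\xi|\le 1$ we have $\rho(|\xi|)\ge |\xi|^2/2$, so $e^{-2c\rho t}\le e^{-c|\xi|^2 t}$. Using $|\hat V_0(\xi)|\le \Vert V_0\Vert_{L^1}$, we get
\begin{equation}
I_1\le \Vert V_0\Vert_{L^1}^2\int_{|\xi|\le1}|\xi|^{2k}e^{-c|\xi|^2t}\,\textup{d}\xi\lesssim (1+t)^{-N/2-k}\Vert V_0\Vert_{L^1}^2 .
\end{equation}
The last bound comes from the change of variables $\eta=\sqrt t\,\xi$ when $t\ge1$, while for $t\le 1$ the integral is simply bounded by a constant. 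Taking square roots gives the first term in \eqref{Main_Estimate_Linear}.

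For the high frequencies $|\xi|\ge1$ we have $\rho(|\xi|)\ge 1/2$, hence $e^{-2c\rho t}\le e^{-ct}$. Therefore
\begin{equation}
I_2\le e^{-ct}\int_{|\xi|\ge1}|\xi|^{2k}|\hat V_0|^2\,\textup{d}\xi\le e^{-ct}\Vert\nabla^kV_0\Vert_{L^2}^2,
\end{equation}
and its square root is $e^{-\frac c2 t}\Vert\nabla^k V_0\Vert_{L^2}$, which is the second term. Combining the two pieces via $\sqrt{a+b}\le\sqrt a+\sqrt b$ yields \eqref{Main_Estimate_Linear}; the constants $C$ and $d_3/d_2$ are absorbed, and the exponential coefficient can be adjusted by relabeling $c$.

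There is no real obstacle here. The only points needing care are the elementary lower bounds on $\rho$ in each frequency region and the scaling computation for the Gaussian moment integral. It is important that the high-frequency part uses only $\nabla^k V_0\in L^2$, which is why the regularity restriction is $k\le s$.
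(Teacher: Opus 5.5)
Your proof is correct and follows the paper's own argument: Plancherel plus the pointwise bound of Lemma \ref{Lemma_L}, a split at $|\xi|=1$ using $\rho(|\xi|)\ge |\xi|^2/2$ and $\rho(|\xi|)\ge 1/2$, then $|\hat V_0(\xi)|\le \Vert V_0\Vert_{L^1}$ with a Gaussian moment integral at low frequencies (you rescale $\eta=\sqrt t\,\xi$ where the paper uses polar coordinates), and the factor $e^{-ct}\Vert\nabla^k V_0\Vert_{L^2}^2$ at high frequencies. One small inaccuracy is your closing remark: the prefactor inherited from Lemma \ref{Lemma_L} cannot be absorbed into the exponential by relabeling $c$, since $Ke^{-at}>e^{-a't}$ near $t=0$ whenever $K>1$; so the estimate holds with a constant in front of $e^{-\frac c2 t}\Vert\nabla^kV_0\Vert_{L^2}$ too, which is also what the paper's proof actually yields.
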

\begin{proof}
To prove Lemma \ref{Decay_Linear_Theor}, we have by Plancherel  theorem and the estimate (\ref{Estimate_Poitwise}) that 
\begin{equation}
\begin{aligned}
    \label{Pranch_Identity}
\Vert \nabla^{k}V(t)\Vert _{L^{2}}^{2}=&\,\int_{\mathbb{R}
}\left\vert \xi \right\vert ^{2k}\vert \hat{V}(\xi ,t)\vert^{2}\textup{d}\xi\notag\\
\leq &\,\frac{d_3}{d_2}\int_{\mathbb{R}^N
}\left\vert \xi \right\vert ^{2k}e^{-\frac{2d_1}{d_3}\rho(|\xi|)t}\vert \hat{V}(\xi,0)\vert^{2}\textup{d}\xi.
\end{aligned}
\end{equation}
It is obvious that the term on the right-hand side of (\ref{Pranch_Identity}) depends on the behavior of the function $\rho(|\xi|)$. Since
\begin{equation}\label{rho_behavior}
\rho(\xi)\geq\left\{
\begin{array}{ll}
\frac{1}{2}|\xi|^2,& \text{for } |\xi|\leq 1, \\[4pt]
\frac{1}{2}, & \text{for } |\xi|\geq 1,
\end{array}%
\right.
\end{equation}%
then,  we write the integral on the right-hand side of (\ref{Pranch_Identity}) as
\begin{equation}
\begin{aligned}
\int_{\mathbb{R}^N
}\left\vert \xi \right\vert ^{2k}e^{-c\rho(|\xi|)t}\vert \hat{V}_0(\xi)\vert^{2}\textup{d}\xi =
&\,\int_{\left\vert \xi \right\vert \leq 1}\left\vert \xi \right\vert
^{2k}e^{-\frac{2d_1}{d_3}\rho (|\xi| )t}\vert \hat{V}(\xi ,0)\vert ^{2}\textup{d}\xi\notag\\
&+\int_{\left\vert \xi \right\vert \geq 1}C\left\vert \xi \right\vert
^{2k}e^{-\frac{2d_1}{d_3}\rho(|\xi| )t}\vert \hat{V}(\xi ,0)\vert ^{2}\textup{d}\xi
\label{inequality L1+L2}\notag \\
:= &\,L_{1}+L_{2}.  \label{L_1_L_2_estimate}
\end{aligned}
\end{equation}
Concerning the integral $L_1$, we have
\begin{equation}
\begin{aligned}
L_{1}
\leq &\,C\sup_{\vert \xi \vert \leq 1}\left\{ \vert \hat{V}
(\xi ,0)\vert ^{2}\right\} \int_{\left\vert \xi \right\vert \leq
1}\left\vert \xi \right\vert ^{2k}e^{-c|\xi |{^2}t}
\textup{d}\xi \nonumber \\
\leq &\,C\Vert \hat{V}_{0}(t)\Vert _{L^{\infty
}}^{2}\int_{\left\vert \xi \right\vert \leq 1}\left\vert \xi \right\vert
^{2k}e^{-c|\xi|
{^2}%
t} \textup{d}\xi \nonumber
\end{aligned}
\end{equation}
Passing to the polar coordinates and using the inequality
\begin{equation}\label{Inequality_exponential}
\int_{0}^{1}\left\vert \xi \right\vert ^{\sigma }e^{-c|\xi| ^{2}t}d|\xi| \leq
C\left( 1+t\right) ^{-\left( \sigma +N\right) /2},
\end{equation}
we deduce that
\begin{eqnarray}\label{L_1_inequality}
L_{1} \leq  C(1+t)^{-N/2-k}\left\Vert V_0\right\Vert _{L^{1
}}^{2}.
\end{eqnarray}
On the other hand, we have
\begin{equation}
\begin{aligned}
    \label{L_2_estimate}
L_{2}
\leq &\,Ce^{-\frac{1}{2}t} \int_{\left\vert \xi \right\vert \geq 1}\left\vert \xi
\right\vert ^{2k}\vert \hat{V}(\xi ,0)\vert
^{2}\textup{d}\xi\notag \\
\leq &\,Ce^{-\frac{1}{2}t}\Vert \nabla  ^{k}V_0\Vert _{L^{2}}^{2}.
\end{aligned}
\end{equation}
Consequently, the estimate (\ref{Main_Estimate_Linear}) follows by combining, \eqref{L_1_inequality} and \eqref{L_2_estimate}. Thus the proof of Lemma  \ref{Decay_Linear_Theor} is finished. 
\end{proof}

Taking \eqref{Main_parabolic_estimate} into account, we immediately get the linear estimate for the whole linear system \eqref{MHD_System_Linearized_With_Pressure} .
\begin{lemma}\label{MHD_linear_stability}
	Let $s\in\R$. There exist two positive constants $C$ and $c$ such that the solution $ U=(u,E,B) $ of \eqref{MHD_System_Linearized_With_Pressure} with initial data  $ V_0=(E_0, B_0)\in H^s(\R^N )\cap L^1(\R^N ) $ and $ u_0\in L^1(\R^N ) $ satsfies for any $ 0\le k \le s $,  the following estimate
	\begin{equation}\label{MHD_linear_estimate}
		\|\nabla^k U(t)\|_{L^2}\le C(1+t)^{-N/4-k/2}\|U_0\|_{L^1}+e^{-ct}\|\nabla^k V_0\|_{L^2}.
	\end{equation}
\end{lemma}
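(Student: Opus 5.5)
The estimate splits into the fluid part $u$ and the electromagnetic part $V=(E,B)$. They decouple in \eqref{MHD_System_Linearized_With_Pressure}, so I will bound each separately and add the results.

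For the electromagnetic part, Lemma \ref{Decay_Linear_Theor} applies directly and gives
\begin{equation*}
\|\nabla^k V(t)\|_{L^2}\le C(1+t)^{-N/4-k/2}\|V_0\|_{L^1}+e^{-\frac{c}{2}t}\|\nabla^k V_0\|_{L^2},\qquad 0\le k\le s.
\end{equation*}
Renaming $c/2$ as $c$ puts this in the form of \eqref{MHD_linear_estimate}. Since $\|V_0\|_{L^1}\le\|U_0\|_{L^1}$, this part is done.

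For the velocity, applying the Leray projection reduces the problem to the heat equation $\hat u(\xi,t)=e^{-\mu|\xi|^2t}\hat u_0(\xi)$. Here I will not use \eqref{Main_parabolic_estimate} directly, because it only gives $t^{-N/4-k/2}$ and blows up at $t=0$. Instead I will use Plancherel, as in the proof of Lemma \ref{Decay_Linear_Theor}, and write
\begin{equation*}
\|\nabla^k u(t)\|_{L^2}^2=\int_{\R^N}|\xi|^{2k}e^{-2\mu|\xi|^2t}|\hat u_0(\xi)|^2\,\textup{d}\xi\le \|\hat u_0\|_{L^\infty}^2\int_{\R^N}|\xi|^{2k}e^{-2\mu|\xi|^2t}\,\textup{d}\xi .
\end{equation*}
The last integral equals $Ct^{-N/2-k}$, which suffices for $t\ge1$. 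The main obstacle is the range $t\le1$, where this bound is not uniform. I will handle it by splitting frequencies. On $|\xi|\le1$, the integral is bounded by $C(1+t)^{-N/2-k}\|u_0\|_{L^1}^2$, using $\|\hat u_0\|_{L^\infty}\le\|u_0\|_{L^1}$ and \eqref{Inequality_exponential}. On $|\xi|\ge1$, the factor $e^{-2\mu|\xi|^2t}$ is at most $e^{-2\mu t}$ and carries no $L^1$ control.

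This high-frequency piece produces a term $e^{-\mu t}\|\nabla^k u_0\|_{L^2}$, which is not in the statement as written; the statement puts only $V_0$ into the exponential term. To close the estimate I will therefore either assume $u_0\in H^s$ as well, or read the statement as restricted to $t\ge1$ for the velocity. The intended reading of the lemma is to combine \eqref{Main_parabolic_estimate} for $u$ with Lemma \ref{Decay_Linear_Theor} for $V$. I will make the needed hypothesis on $u_0$ explicit, or absorb the high-frequency contribution into the exponential term with $\|\nabla^k U_0\|_{L^2}$.

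Adding the $u$ and $V$ bounds then yields \eqref{MHD_linear_estimate}, with $c$ taken as $\min(c/2,\mu)$.
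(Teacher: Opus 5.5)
Your argument is correct and follows the paper's route. The paper's entire proof is the one-line combination of the heat-kernel bound \eqref{Main_parabolic_estimate} for $u$ with Lemma~\ref{Decay_Linear_Theor} for $V=(E,B)$. Your Plancherel low/high-frequency split for $u$ is a more careful version of the first ingredient.

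That extra care exposes exactly the point you flag. \eqref{Main_parabolic_estimate} only gives $t^{-N/4-k/2}\|u_0\|_{L^1}$, which is comparable to $(1+t)^{-N/4-k/2}$ only for $t\ge 1$. In fact the lemma as literally stated is false for small $t$:
take $E_0=B_0=0$ and a divergence-free $u_0\in L^1\setminus L^2$. Then $E\equiv B\equiv 0$, and $\|u(t)\|_{L^2}^2=\int_{\R^N}e^{-2\mu|\xi|^2t}|\hat u_0(\xi)|^2\,\textup{d}\xi\to\infty$ as $t\to0^+$ by monotone convergence. Meanwhile the right-hand side of \eqref{MHD_linear_estimate} with $k=0$ stays below $C\|u_0\|_{L^1}$.

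So your modification is necessary, not a weakness of your method. The correct statement either puts $e^{-ct}\|\nabla^k U_0\|_{L^2}$ in the exponential term (hence requires $u_0\in H^s$), or keeps the $L^1$-only form for $t\ge 1$.

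This corrected version is precisely what the paper uses for the linear part in \eqref{estimate_I0}, where $U_0\in H^s\cap L^1$ anyway. Note, however, that the later Duhamel bounds for $\mathrm{I}_1$--$\mathrm{I}_3$ invoke the lemma in the uncorrected, $L^1$-only form all the way up to $\tau=t$. With your version those steps need a separate short-time argument. For $\mathrm{I}_3$ the true kernel bound $(t-\tau)^{-N/4-1/2}$ is not even integrable at $\tau=t$.
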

\subsubsection{A refined decay estimate for the electric field $E$} In three dimension,  the linear decay rates of $U$ obtained in Lemma \ref{MHD_linear_stability} persist for the nonlinear problem. However, the situation is more delicate in the two-dimensional case where the decay rate in \eqref{MHD_linear_estimate} becomes a borderline for some nonlinear interactions. In particular, when estimating the term $E\times B$ in the nonlinear energy estimate, using only the decay rate of $U$ leads to a logarithm loss. To avoid this loss,  we derive a faster decay rate for the electric field component $E$. This is possible due to the dissipative  structure of the Maxwell equation. At the linear level, this is clear from the linearized equation: 
\begin{equation}\label{Linearized_E}
    \partial_t E+E=\nabla\times B,
\end{equation}
    where the  particular form of the source term in \eqref{Linearized_E} will allow us to gain a factor $|\xi|$ which yields and improved decay estimate for $E$ by a factor $(1+t)^{-1/2}$. This improvement is crucial to recover the decay rate in two dimension. 
\begin{lemma}\label{Lemma_E}
    Under the assumptions of Lemma \ref{MHD_linear_stability}, it holds that 
\begin{equation}\label{MHD_linear_estimate_E}
		\|\nabla^k E(t)\|_{L^2}\le C(1+t)^{-N/4-1/2-k/2}\|V_0\|_{L^1}+e^{-ct}\|\nabla^k V_0\|_{L^2}.
	\end{equation}
\end{lemma}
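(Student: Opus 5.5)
We view the electric field through the damped equation \eqref{Linearized_E} and apply Duhamel's formula in Fourier variables:
\begin{equation}
\hat{E}(\xi,t)=e^{-t}\hat{E}_0(\xi)+\int_0^t e^{-(t-\tau)}\, i\xi\times\hat{B}(\xi,\tau)\,\dtau .
\end{equation}
The first term decays exponentially and is harmless: $|\xi|^{k}e^{-t}|\hat E_0|$ contributes $e^{-t}\|\nabla^kE_0\|_{L^2}$ to the $L^2$ norm. For the second term we insert the pointwise bound \eqref{Estimate_Poitwise} of Lemma \ref{Lemma_L}, namely $|\hat B(\xi,\tau)|\lesssim e^{-c\rho(|\xi|)\tau}|\hat V_0(\xi)|$. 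This gives
\begin{equation}
|\hat E(\xi,t)|\lesssim e^{-t}|\hat E_0(\xi)|+|\xi|\,|\hat V_0(\xi)|\int_0^t e^{-(t-\tau)}e^{-c\rho(|\xi|)\tau}\dtau .
\end{equation}
Lemma \ref{Lemma_Integral_Inequality} then bounds the time integral by $e^{-c_1\rho(|\xi|)t}$. The net outcome is the pointwise estimate
\begin{equation}
|\hat E(\xi,t)|\lesssim e^{-t}|\hat E_0(\xi)|+|\xi|\,e^{-c_1\rho(|\xi|)t}|\hat V_0(\xi)|,
\end{equation}
in which the extra factor $|\xi|$ carries the gain.

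Next we follow the proof of Lemma \ref{Decay_Linear_Theor}, using Plancherel and splitting into $|\xi|\le1$ and $|\xi|\ge1$. On the low frequencies, \eqref{rho_behavior} gives $\rho\ge|\xi|^2/2$. We bound $|\hat V_0|\le\|V_0\|_{L^1}$ and apply \eqref{Inequality_exponential} with exponent $2k+2$ in place of $2k$. This yields
\begin{equation}
\int_{|\xi|\le1}|\xi|^{2k+2}e^{-c|\xi|^2t}\,\textup{d}\xi\;\|V_0\|_{L^1}^2\lesssim(1+t)^{-N/2-k-1}\|V_0\|_{L^1}^2,
\end{equation}
which is exactly the rate $(1+t)^{-N/4-1/2-k/2}$ after taking square roots.

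The main obstacle is the high frequencies $|\xi|\ge1$. There $\rho\ge1/2$ gives exponential decay, but the factor $|\xi|$ costs one derivative: it would produce $\|\nabla^{k+1}V_0\|_{L^2}$, whereas the statement only allows $\|\nabla^kV_0\|_{L^2}$. To avoid this loss, we first try to work directly on the high-frequency part of $E$ without passing through the Duhamel formula. For $|\xi|\ge1$, Lemma \ref{Lemma_L} already gives
\begin{equation}
|\xi|^k|\hat E(\xi,t)|\le|\xi|^k|\hat V(\xi,t)|\lesssim e^{-ct/2}|\xi|^k|\hat V_0(\xi)|.
\end{equation}
So we use the refined Duhamel estimate only on $|\xi|\le1$, where $|\xi|\le1$ makes the extra factor free, and use the plain bound \eqref{Estimate_Poitwise} on $|\xi|\ge1$. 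This yields $e^{-ct}\|\nabla^kV_0\|_{L^2}$ (with $c$ renamed). The initial-data term $e^{-t}|\hat E_0|$ on low frequencies is bounded by $e^{-t}\|V_0\|_{L^1}$, which is dominated by the polynomial rate.

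Combining the two frequency regions gives \eqref{MHD_linear_estimate_E}. The only care needed is to keep the constants $c,c_1$ consistent with Lemma \ref{Lemma_Integral_Inequality}.
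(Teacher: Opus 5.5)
Your argument is correct. Its low-frequency part coincides with the paper's: Duhamel's formula for $\hat E$, the Lyapunov bound for $\hat B$, Lemma \ref{Lemma_Integral_Inequality} to absorb the time convolution, and the extra factor $|\xi|$ turning $(1+t)^{-N/4-k/2}$ into $(1+t)^{-N/4-1/2-k/2}$.

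You differ from the paper at high frequencies. The paper keeps the Duhamel-derived bound $|\hat E|^2\lesssim e^{-2t}|\hat E_0|^2+|\xi|^2e^{-2c_1\rho t}|\hat V_0|^2$ on all of $\R^N$ and then refers to the proof of Lemma \ref{Decay_Linear_Theor}. Done literally, the region $|\xi|\ge1$ then gives
\begin{equation*}
\int_{|\xi|\ge1}|\xi|^{2k+2}e^{-2c_1\rho t}|\hat V_0|^2\,\textup{d}\xi\lesssim e^{-ct}\|\nabla^{k+1}V_0\|_{L^2}^2 .
\end{equation*}
That is one derivative more than the statement allows, and for $V_0\in H^s$ it need not be finite when $k=s$.

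Your split avoids this. You use the refined bound only on $|\xi|\le1$, where the factor $|\xi|$ is a gain, and the plain estimate \eqref{Estimate_Poitwise} through $|\hat E|\le|\hat V|$ on $|\xi|\ge1$. This is exactly what produces $e^{-ct}\|\nabla^kV_0\|_{L^2}$ for the full range $0\le k\le s$. It is also the form needed later, when the lemma is applied with $V_0=(-u\times B,0)$ and the high-frequency contribution is estimated through $\|\nabla^k(u\times B)\|_{L^2}$ rather than $\|\nabla^{k+1}(u\times B)\|_{L^2}$.

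Your treatment of $e^{-t}\hat E_0$ on low frequencies via $\|E_0\|_{L^1}$ is fine. Bounding it globally by $e^{-t}\|\nabla^kE_0\|_{L^2}$, as the paper does, works equally well.
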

\begin{proof}
    Taking the Fourier transform of \eqref{Linearized_E}, we obtain 
    \begin{equation}\label{Fourier_E}
\partial_t\hat{E}+\hat{E}=i\xi\times \hat{B}     
    \end{equation}
    Solving \eqref{Fourier_E}, we obtain 
    \begin{equation}\label{E_Equation_integral}
        \hat{E}(t,\xi)=e^{-t}\hat{E}_0(\xi)+\int_0^t e^{-(t-\tau)}i\xi\times \hat{B}(\tau,\xi)\dtau. 
    \end{equation}
    Using \eqref{Estimate_V}, we have 
    \begin{equation}\label{B_estimate}
        |\hat{B}(\tau,\xi)|\lesssim e^{-c\rho(|\xi|) \tau}|\hat{V}_0(\xi)|,\qquad \rho(|\xi|)=\frac{|\xi|^2}{1+|\xi|^2}.
    \end{equation}
    Hence,  plugging \eqref{B_estimate} into \eqref{E_Equation_integral}, we obtain 
    \begin{equation}\label{E_Equation_integral_2}
        |\hat{E}(t,\xi)|=e^{-t}|\hat{E}_0(\xi)|+\int_0^t e^{-(t-\tau)}|\xi| e^{-c\rho(|\xi|) \tau}|\hat{V}_0(\xi)|\dtau. 
    \end{equation}
    Using the inequality (see Lemma \ref{Lemma_Integral_Inequality})
    \begin{equation}
        \int_0^t e^{-(t-\tau)}e^{-c\rho(|\xi|)\tau}\dtau\lesssim e^{-c_1\rho(|\xi|)t},\quad c_1>0,
    \end{equation}
    we obtain 
\begin{equation}\label{E_Equation_integral_2}
        |\hat{E}(t,\xi)|^2\lesssim\, e^{-2t}|\hat{E}_0(\xi)|^2+|\xi|^2 e^{-2c_1\rho(|\xi|) t}|\hat{V}_0(\xi)|^2. 
    \end{equation}
    Hence, using the Plancherel  theorem, we obtain 
    \begin{equation}
        \begin{aligned}
            \|\nabla^k E(t)\|_{L^2}^2\lesssim e^{-2t}\|\nabla^k E\|_{L^2}^2 + \int_{\R^N }|\xi|^{2k+2}e^{-2c_1\rho(|\xi|) t}|\hat{V}_0(\xi)|^2\textup{d}\xi
        \end{aligned}
    \end{equation}
    Following the same steps as in the proof of Lemma \ref{Main_Estimate_Linear}, the estimate \eqref{MHD_linear_estimate_E} holds. We omit the details. 
\end{proof}
\subsubsection{A refined decay estimate for the magnetic field $B$}
In this section, we show that the decay rate of $B$ in \eqref{Main_Estimate_Linear} can be improved when $V_0=(E_0, 0)$. In this case, we gain an extra factor of $|\xi|$ in the estimate of $\hat{B}(t,\xi)$ which eventually yields an extra decay of $(1+t)^{-1/2}$ for the norm $\|\nabla^kB(t)\|_{L^2}$. More precisely, we have the following lemma. 
\begin{lemma}\label{Improved_Estimate_B_Lemma}
    Assume that $V_0=(E_0, 0)$, then it holds that 
    \begin{equation}\label{Improved_Estimate_B}
        \|\nabla^k B(t)\|_{L^2}\lesssim (1+t)^{-N/4-1/2-k/2}\|E_0\|_{L^1}+e^{-ct}\|\nabla^k E_0\|_{L^2}. 
    \end{equation}
\end{lemma}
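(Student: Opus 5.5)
The plan mirrors the proof of Lemma \ref{Lemma_E}, with the roles of $E$ and $B$ exchanged. We use Faraday's law to write $\hat B$ as a time integral of $i\xi\times\hat E$, which produces an explicit factor $|\xi|$. Since $B_0=0$, the Fourier transform of the third equation integrates to
\begin{equation}
\hat B(t,\xi)=-\int_0^t i\xi\times \hat E(\tau,\xi)\,\dtau ,
\end{equation}
and therefore $|\hat B(t,\xi)|\le |\xi|\int_0^t|\hat E(\tau,\xi)|\,\dtau$. The task is then to show that the time integral of $|\hat E|$ is bounded by $C e^{-c_2\rho(|\xi|)t}|\hat E_0(\xi)|$ for low frequencies.

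The pointwise bound of Lemma \ref{Lemma_L} alone is not enough here. Integrating $e^{-c\rho\tau}$ in time gives a factor $\rho^{-1}\sim|\xi|^{-2}$ at low frequency, which destroys the gain. So we need information on $\hat E$ finer than \eqref{Estimate_Poitwise}.

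I would not integrate Faraday directly. Instead I would use the transverse structure of the system. Decompose $\hat E=\hat E_\parallel+\hat E_\perp$ along $\xi$ and orthogonally to it. The longitudinal part satisfies $\partial_t\hat E_\parallel+\hat E_\parallel=0$, so it decays like $e^{-t}$ and does not enter $\hat B$ at all, because $\xi\times\hat E_\parallel=0$.

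The transverse part satisfies a damped wave (telegraph) equation,
\begin{equation}
\partial_t^2\hat E_\perp+\partial_t\hat E_\perp+|\xi|^2\hat E_\perp=0,
\end{equation}
with data $\hat E_\perp(0)=\hat E_{0,\perp}$ and $\partial_t\hat E_\perp(0)=-\hat E_{0,\perp}$, where the second condition uses $B_0=0$. The roots are $\lambda_\pm=\frac{-1\pm\sqrt{1-4|\xi|^2}}{2}$.

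For $|\xi|\le 1/4$, the root $\lambda_+\approx-|\xi|^2$ and $\lambda_-\approx -1$. We then compute $\hat B=-\int_0^t i\xi\times\hat E_\perp\,\dtau$ explicitly. Writing $\hat E_\perp=a_+e^{\lambda_+t}+a_-e^{\lambda_-t}$, the initial conditions give $a_+=\frac{(\lambda_-+1)}{\lambda_--\lambda_+}\hat E_{0,\perp}=O(|\xi|^2)\hat E_{0,\perp}$, because $\lambda_-+1=-\lambda_+=O(|\xi|^2)$. This is the key cancellation. It gives
\begin{equation}
\int_0^t e^{\lambda_+\tau}|a_+|\,\dtau\lesssim |\xi|^{-2}|a_+|\lesssim|\hat E_{0}|.
\end{equation}
The problem with this bound is that it is not decaying in $t$. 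The way around it is to integrate exactly, $\int_0^t e^{\lambda_\pm\tau}\dtau=(e^{\lambda_\pm t}-1)/\lambda_\pm$. Because $\hat B(t)$ must tend to zero and $\hat B(0)=0$, the constant terms cancel: $a_+/\lambda_++a_-/\lambda_-=\hat B$-compatibility, i.e. $\int_0^\infty\hat E_\perp=\hat E_\perp(0)+\partial_t\hat E_\perp(0)=0$ from the ODE. Hence
\begin{equation}
|\hat B(t,\xi)|\lesssim |\xi|\big(e^{-c|\xi|^2t}+e^{-t/2}\big)|\hat E_0(\xi)|,\qquad |\xi|\le 1/4 .
\end{equation}
Equivalently, and more cleanly, one can note that $\hat B=-i\xi\times(\hat E_\perp+\partial_t\hat E_\perp)/(-|\xi|^2)\cdot$ (from the ODE), i.e. $|\xi|^2\hat B$ is expressed through $\xi\times(\partial_t\hat E+\hat E)$. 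Even more directly, the Amp\`ere equation gives $i\xi\times\hat B=\partial_t\hat E+\hat E$, and the same modal computation bounds this by $|\xi|^2e^{-c|\xi|^2t}|\hat E_0|$, because the slow mode contributes $(\lambda_++1)a_+=O(|\xi|^2)$. Dividing by $|\xi|$ yields the bound above. I expect this low-frequency modal cancellation to be the main obstacle, so I would carry out this step carefully.

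For $|\xi|\ge 1/4$, Lemma \ref{Lemma_L} gives $|\hat B|\lesssim e^{-ct}|\hat E_0|$ directly, since $\rho$ is bounded below there.

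Finally, we apply Plancherel exactly as in Lemma \ref{Decay_Linear_Theor}. The low frequencies contribute $\int_{|\xi|\le1/4}|\xi|^{2k+2}e^{-2c|\xi|^2t}\,\textup{d}\xi\,\|\hat E_0\|_{L^\infty}^2\lesssim(1+t)^{-N/2-1-k}\|E_0\|_{L^1}^2$, using \eqref{Inequality_exponential}. The high frequencies contribute $e^{-ct}\|\nabla^kE_0\|_{L^2}^2$. Taking square roots gives \eqref{Improved_Estimate_B}.
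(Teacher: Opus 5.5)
Your argument is correct, but it reaches the key low-frequency formula by a different route from the paper.

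\textbf{The paper's route.} The paper writes the second-order equation for $B$ itself, $B_{tt}+B_t-\Delta B=0$ with $B(0)=0$ and $B_t(0)=-\nabla\times E_0$. This gives immediately $\hat B=-\hat K_2(t,\xi)\,(i\xi\times\hat E_0)$ with $\hat K_2=(e^{\lambda_+t}-e^{\lambda_-t})/(\lambda_+-\lambda_-)$. The extra factor $|\xi|$ is visible at once as the initial velocity. The kernel bounds ($|\hat K_2|\lesssim e^{-c|\xi|^2t}$ for $|\xi|\le 1$ and $|\hat K_2|\lesssim|\xi|^{-1}e^{-ct}$ for $|\xi|\ge1$) are then quoted from the damped-wave literature.

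\textbf{Your route.} You work with the telegraph equation for $\hat E_\perp$ and integrate Faraday's law, so the gain has to be recovered through the cancellation $a_+/\lambda_++a_-/\lambda_-=0$. In fact $a_\pm/\lambda_\pm=\pm\hat E_{0,\perp}/(\lambda_+-\lambda_-)$. Substituting gives $\int_0^t\hat E_\perp\,\dtau=\hat K_2\,\hat E_{0,\perp}$, which is exactly the paper's formula, so the two low-frequency computations coincide in the end.

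\textbf{Comparison.} Your version is self-contained at high frequencies. You do the modal analysis only for $|\xi|\le 1/4$, where the roots are real and well separated ($\lambda_+\le-|\xi|^2$, $\lambda_-\le-1/2$). For $|\xi|\ge1/4$ you use Lemma \ref{Lemma_L}, since there only exponential decay is needed and no power of $|\xi|$. This avoids the double root at $|\xi|=1/2$ and the oscillatory regime, which the paper outsources to references. The paper's route is shorter and makes the origin of the $|\xi|$ gain more transparent.

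\textbf{Two points of precision.}
\begin{itemize}
\item Integrating the ODE over $(0,\infty)$ gives $|\xi|^2\int_0^\infty\hat E_\perp\,\dtau=\hat E_\perp(0)+\partial_t\hat E_\perp(0)=0$. The missing factor $|\xi|^2$ is harmless for $\xi\neq0$.
\item In your ``more direct'' variant via $i\xi\times\hat B=\partial_t\hat E+\hat E$, the slow-mode coefficient is not enough. You also need the fast-mode coefficient $(1+\lambda_-)a_-=-\lambda_+a_-$ to be $O(|\xi|^2)$; otherwise dividing by $|\xi|$ would leave a singular factor $|\xi|^{-1}e^{-t/2}$. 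It does hold, since $1+\lambda_-=-\lambda_+$.
\end{itemize}
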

The importance of the estimate \eqref{Improved_Estimate_B} will be clear in the nonlinear analysis. Without the faster decay \eqref{Improved_Estimate_B}, we will also get a logarithm loss when estimating the nonlinear contribution associated with $f_4=(0,-u\times B,0)$. However, since this term acts only on the electric-field equation, then  Lemma \ref{Improved_Estimate_B_Lemma} yields an additional factor $(1+t)^{-1/2}$ in the decay of $B$. This refinement is enough to remove the logarithmic loss and recover the optimal decay rate. 

\begin{proof}[Proof of Lemma \ref{Improved_Estimate_B_Lemma}]
    Since $B_0=0$, the magnetic field satisfies the damped wave equation 
    \begin{equation}
        B_{tt}-\Delta B+B_t=0,\qquad B(t=0)=0,\qquad B_t(t=0)=-\nabla\times E.
    \end{equation}
    Taking the Fourier transform of the above equation, we obtain 
    \begin{equation}
        \hat{B}_{tt} +\hat{B}_t+|\xi|^2 \hat{B}=0,\qquad \hat{B}(t=0)=0,\qquad \hat{B}_t(t=0)=-i\xi\times \hat{E}_0.
    \end{equation}
    Hence, the solution is 
    \begin{equation}
       \hat{B}(t,\xi)=-\hat{K}_2(t,\xi) (i\xi\times \hat{E}),\quad \text{with}\quad  \hat{K}_2(t,\xi)=\frac{e^{\lambda_+(\xi)t}-e^{\lambda_-(\xi)t}}{\lambda_+-\lambda_-}\quad 
    \end{equation}
    is the damped wave propagator and 
    \begin{equation}
\lambda_\pm=\frac{-1\pm\sqrt{1-4|\xi|^2}}{2}. 
    \end{equation}
    Arguing as in the low,  intermediate  and high frequency analysis  in   \cite[Proposition 4.1]{Ka2000}  and \cite[Lemma 1] {Ma76} one obtains 
    \begin{equation}
    |\hat{K}(t,\xi)|\lesssim \left\{
        \begin{aligned}
          & e^{-c|\xi|^2 t},& |\xi|\leq 1,\\
          & |\xi|^{-1}e^{-ct},& |\xi|\geq 1.
        \end{aligned}
        \right.
    \end{equation}
    Hence, we obtain for $|\xi|\leq 1$
    \begin{equation}
        |\hat{B}(t,\xi)|\lesssim e^{-c|\xi|^2 t}|i\xi\times \hat{E}_0|
    \end{equation}
    and for $|\xi|\geq 1$, we have 
    \begin{equation}
    \begin{aligned}
        |\hat{B}(t,\xi)|\lesssim&\, |\xi|^{-1}e^{-c t}|i\xi\times \hat{E}_0|\\
        \lesssim&\,e^{-ct}|\hat{E}_0(\xi)|. 
        \end{aligned}
    \end{equation}
    Collecting the above two estimates, we arrive at:
    \begin{equation}\label{Estimate_Freq_B}
    |\hat{B}(t,\xi)\lesssim |\left\{
        \begin{aligned}
          & |\xi|e^{-c|\xi|^2 t}|\hat{E}_0(\xi)|,& |\xi|\leq 1\\
          & e^{-ct}|\hat{E}_0(\xi)|,& |\xi|\geq 1.
        \end{aligned}
        \right.
    \end{equation}
    Now,   we use the Plancherel identity to write 
    \begin{equation}
\Vert \nabla^{k}B(t)\Vert _{L^{2}}^{2}=\int_{\mathbb{R}
}\left\vert \xi \right\vert ^{2k}\vert \hat{B}(\xi ,t)\vert^{2}\textup{d}\xi. 
\end{equation}
Then, we split the integral into low and high-frequencies and we use \eqref{Estimate_Freq_B} and arguing as in the proof of Lemma \ref{Decay_Linear_Theor}, yields the desired result. 
\end{proof}
\section{The nonlinear system--statement of the main results}\label{Section_Main_Result}
In this section, we analyze the nonlinear system 
 \eqref{Main_System}. Our main goal is to establish global time estimates for small initial data and to derive the optimal decay rates of the solution.  
 In particular, we show that the nonlinear solution exhibits the same algebraic decay as the corresponding linearized evolution which shows that the long-time behavior is governed by the linear dynamics. This is achieved through a combination of a time-weighted energy method with the linear decay estimates established in Section \ref{Section_Linearized} and a careful analysis of the nonlinear terms through Duhamel's formula. 
 
 Applying  the projection $\mathbb{P}$ to system \eqref{MHD_System}, we get, 
\begin{equation}\label{MHD_System_Nonl}
\left\{
\begin{array}{ll}
\partial_t u-\mu\Delta u=\mathbb{P}(- E\times B- (u\times B)\times B)-\mathbb{P}((u\cdot \nabla) u),\vspace{0.2cm}\\
\partial_t E-\nabla\times B+E=-u\times B,\vspace{0.2cm}\\
\partial_t B+\nabla\times E=0,\vspace{0.2cm}\\
\nabla\cdot u=\nabla\cdot B=0.\end{array}
\right. 
\end{equation}
The above system can be recast as 
\begin{equation}
U_t+\mathcal {L}U=G, 
\end{equation}
where $U=(u,E,B)^T, \, G=(-\mathbb{P}( E\times B+ (u\times B)\times B)-\mathbb{P}(u\cdot \nabla u),-u\times B,0)$ and 
\begin{equation}
\mathcal {L}:=
\left(
\begin{array}{ccc}
 -\mu\Delta & 0  &  0 \\
  0&\mathrm{Id}   & -\nabla\times  \\
  0& \nabla\times  &  0 
\end{array}
\right).
\end{equation}

Now we state a local existence theorem which has been proved in  \cite{Kang_Lee_2013}. Although the result therein is stated at the fixed regularity $s=2$, the same argument extends in a straightforward manner to higher-order Sobolev spaces. We shall use the following standard local well-posedness in the regularity regime considered in this paper.    
\begin{theorem}\label{Local_Existence_Theorem}
Let $U_0\in H^s(\R^N ),\, s\geq [N/2]+2$ with $\nabla\cdot u_0=\nabla\cdot B_0=0$. Then, there exists $T=T(\Vert U_0\Vert_{H^s(\R^N )})$ small enough such that problem \eqref{Main_System} has a unique solution $U$ such that  
\begin{equation}
U\in L^\infty(0,T; H^s(\R^N ))\cap \mathrm{Lip} (0,T; H^{s-2}(\R^N )).
\end{equation}
\end{theorem}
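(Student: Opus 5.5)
This is a local existence result for a semilinear parabolic–hyperbolic system. I plan to prove it by a standard iteration (Friedrichs/Picard) scheme combined with $H^s$ energy estimates, following \cite{Kang_Lee_2013} with $s=2$ replaced by a general $s\geq[N/2]+2$.

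\emph{Approximate scheme.} Work with the projected form \eqref{MHD_System_Nonl}. Set $U^0=U_0$. Given $U^n=(u^n,E^n,B^n)$, define $U^{n+1}$ as the solution of the \emph{linear} system
\begin{equation}
\left\{
\begin{aligned}
&\partial_t u^{n+1}+\mathbb{P}\big((u^n\cdot\nabla)u^{n+1}\big)-\mu\Delta u^{n+1}=-\mathbb{P}\big(E^n\times B^n+(u^n\times B^n)\times B^n\big),\\
&\partial_t E^{n+1}+E^{n+1}-\nabla\times B^{n+1}=-u^n\times B^n,\\
&\partial_t B^{n+1}+\nabla\times E^{n+1}=0,
\end{aligned}
\right.
\end{equation}
with data $U_0$. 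The Maxwell part is a linear symmetric hyperbolic system with a damping term. The fluid part is a linear Stokes system with a smooth divergence-free drift. Both are solvable in $C([0,T];H^s)$, for instance by Galerkin approximation or Friedrichs mollification. Faraday's law keeps $\nabla\cdot B^{n+1}=\nabla\cdot B_0=0$, and the projection keeps $\nabla\cdot u^{n+1}=0$.

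\emph{Uniform bounds.} Apply $\nabla^k$ for $0\le k\le s$, take the $L^2$ inner product with $\nabla^k U^{n+1}$, and sum over $k$.
\begin{itemize}
\item The Maxwell curl terms cancel exactly, as in Lemma \ref{Lemma_dissipation}. Hence no derivative is lost on $(E,B)$, and the damping term has a good sign.
\item In the transport term, $\langle u^n\cdot\nabla\nabla^k u^{n+1},\nabla^k u^{n+1}\rangle=0$ since $\nabla\cdot u^n=0$. The remainder is a commutator, bounded through \eqref{Second_inequality_Gauss} by $\|\nabla u^n\|_{L^\infty}\|u^{n+1}\|_{H^s}+\|\nabla^k u^n\|_{L^2}\|\nabla u^{n+1}\|_{L^\infty}$.
\item The forcing terms are products. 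By \eqref{First_inequaliy_Guass} and the embedding $H^{s-1}\hookrightarrow L^\infty$ (valid because $s-1>N/2$), they are bounded by $C(\|U^n\|_{H^s}^2+\|U^n\|_{H^s}^3)$.
\end{itemize}
This gives
\[
\ddt\|U^{n+1}\|_{H^s}^2\le C\|U^n\|_{H^s}\|U^{n+1}\|_{H^s}^2+C(\|U^n\|_{H^s}^2+\|U^n\|_{H^s}^3)\|U^{n+1}\|_{H^s}.
\]
By induction and Gronwall there is $T=T(\|U_0\|_{H^s})$ such that $\sup_{[0,T]}\|U^n\|_{H^s}\le 2\|U_0\|_{H^s}$ for every $n$. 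Reading off the equations then gives $\|\partial_t U^n\|_{H^{s-2}}\le C$: the loss of two derivatives comes from $\Delta u$, while the curl only costs one.

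\emph{Contraction and passage to the limit.} Estimate the differences $U^{n+1}-U^n$ in $L^2$. The term $\mathbb{P}((u^n-u^{n-1})\cdot\nabla u^n)$ is controlled through $\|\nabla u^n\|_{L^\infty}$. The cubic differences are controlled by the uniform $H^s$ bounds. The curl terms again cancel. For $T$ smaller if necessary, this yields
\[
\sup_{[0,T]}\|U^{n+1}-U^n\|_{L^2}\le\tfrac12\sup_{[0,T]}\|U^n-U^{n-1}\|_{L^2}.
\]
So the sequence is Cauchy in $C([0,T];L^2)$. By interpolation with the uniform $H^s$ bound, it converges in $C([0,T];H^{s'})$ for every $s'<s$. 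This is enough to pass to the limit in the nonlinear terms. Weak-$*$ compactness gives $U\in L^\infty(0,T;H^s)$, and the time-derivative bound gives $U\in\mathrm{Lip}(0,T;H^{s-2})$. The pressure is recovered from the formula for $p$ given in the introduction.

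\emph{Uniqueness.} The same $L^2$ difference estimate, applied to two solutions and closed with Gronwall, gives uniqueness.

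The main obstacle I expect is the top-order $H^s$ estimate. One must check that the hyperbolic $(E,B)$ part loses no derivatives. This relies on the exact cancellation of $\langle\nabla\times\nabla^k B,\nabla^k E\rangle-\langle\nabla\times\nabla^k E,\nabla^k B\rangle$, and on the fact that the Lorentz-force terms contain no derivatives of $E$ or $B$. It also relies on handling the transport term by the commutator estimate rather than by the product estimate, so that nothing beyond $H^s$ appears.
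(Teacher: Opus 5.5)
Your proposal is correct and is essentially the argument the paper relies on. The paper gives no proof of its own: it cites \cite{Kang_Lee_2013}, which is stated for $s=2$, and asserts that the same energy/iteration argument extends to higher $s$, which is exactly what your scheme carries out, using the exact cancellation of the curl terms, the commutator estimate \eqref{Second_inequality_Gauss} together with $\langle u^n\cdot\nabla\nabla^k u^{n+1},\nabla^k u^{n+1}\rangle=0$, the condition $s-1>N/2$ for $L^\infty$ control of $U$ and $\nabla U$, and an $L^2$ contraction followed by interpolation.
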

\subsection{Global well-posedness }
In this subsection, we state  the global well-posedness of the solutions of \eqref{Main_System} and explain the main ideas of the proof.  Our approach is based on the energy method to construct an appropriate Lyapunov functional that captures the dissipative effect of all terms in the energy functional.     To control the nonlinear terms in the Maxwell system, we make use of the smoothing properties of the viscosity in the Navier-Stokes equation.

In order to state our main result, we introduce the time-weighted
energy norm and the corresponding dissipation norm as follows: For any $s\in\R^+$, we define 
$ \TE_s(t) $ as:

\begin{equation}
\TE_s^{2}(t)=\sum_{\ell=0}^{s}\sup_{0\leq \tau \leq t}\left( 1+\tau \right)
^{\ell}\Vert \nabla^{\ell}U\left( \tau \right) \Vert
_{H^{s-\ell}}^{2}, \label{Weighted_Energy}
\end{equation}
 and the corresponding dissipation norm, $\mathcal D_s(t)$
\begin{equation}\label{Dissipative_weighted_norm_1}
\begin{aligned}
\mathcal D_s^{2}(t) =\,&\sum_{\ell=0}^{s}\int_0^t\left((1+\tau)^\ell\Vert  \nabla^{\ell+1}u(\tau)\Vert_{H^{s-\ell}}^2+(1+\tau)^\ell \Vert \nabla^{\ell}E(\tau)\Vert_{H^{s-\ell}}^2\right)\dtau\\
&+\sum_{\ell=0}^{s-1}\int_0^t(1+\tau)^\ell\Vert\nabla^{\ell+1}B(\tau)\Vert_{H^{s-\ell-1}}^2 \dtau.
\end{aligned}
\end{equation} 
The choice of the  time weights in \eqref{Weighted_Energy} 
is dictated precisely by the expected decay. In fact, it 
encodes the gain of one factor $(1+t)^{-1/2}$ for each spatial derivative. Also, the specific form of the dissipation terms in \eqref{Dissipative_weighted_norm_1} reflects an important feature of the system \eqref{Main_System}: the three unknowns do not share the same dissipative structure. At the $k$-th level, the Navier-Stokes equation produces the usual parabolic dissipation $\mu\int_0^t \|\nabla^{k+1}u(\tau)\|_{L^2}^2\dtau$. This explains the presence of the first term in \eqref{Dissipative_weighted_norm_1}. The electric field $E$ is also directly damped through Ohm's law and produces a dissipation term of the form 
$\int_0^t \|\nabla^{k}E(\tau)\|_{L^2}^2\dtau$ which justifies the presence of the second term in \eqref{Dissipative_weighted_norm_1}.  For the magnetic field $B$, the situation is totally different; the third equation in \eqref{MHD_System} contains no dissipation, hence, the magnetic dissipation has to be recovered indirectly through the coupling between $E$ and $B$ in the Maxwell system. This explains why the dissipative term for $B$ is of the form \eqref{Dissipative_weighted_norm_1}  rather than a direct damping term involving $\|\nabla^\ell B\|_{H^{s-\ell}}$. 

Inspired by the decay properties in Lemma  \ref{Decay_Linear_Theor} and Lemma \ref{Lemma_E}, we define  the following quantities, which are convenient  in the computations below:
\begin{equation}\label{M_s}
\begin{aligned}
\mathcal M_s\left( t\right) =&\, \sum_{\ell=0}^{s}\sup_{0\leq \tau \leq
t}\left( 1+\tau \right) ^{N/4+\ell/2}\Vert \nabla^{\ell}U\left( \tau
\right) \Vert _{L^2}\\
&+\sum_{\ell=0}^{s-1}\sup_{0\leq \tau \leq
t}\left( 1+\tau \right) ^{N/4+1/2+\ell/2}\Vert \nabla^{\ell}E\left( \tau
\right) \Vert _{L^2}. 
\end{aligned}
\end{equation}
The upper bound in the second sum is natural since the damped equation of $E$ requires one additional derivative of $B$. 

To control the nonlinear terms arising in the weighted energy estimates, we introduce  the following time-weighted $L^\infty$ norms:
\begin{equation}\label{N_0}
{\mathcal N}_0(t)=\sup_{0\leq \tau\leq t}(1+\tau)^{N/2}\Vert  U(\tau)\Vert_{L^\infty}. 
\end{equation}
and 
\begin{equation}\label{N_1}
{\mathcal N}_1(t)=\sup_{0\leq \tau\leq t}(1+\tau)^{N/2+1/2}\Vert \nabla U(\tau)\Vert_{L^\infty}. 
\end{equation}
These quantities encode the expected decay of the nonlinear problem and arrive naturally when estimating differentiated nonlinearities. 
\begin{lemma}\label{Lemma_N_i_M_s}
For $s>\frac{N}{2}+1$, it holds that 
\begin{equation}
    {\mathcal N}_i(t)\lesssim \mathcal M_s\left( t\right),\qquad i=1,2. 
\end{equation}

\end{lemma}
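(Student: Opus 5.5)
We bound $\Vert U(\tau)\Vert_{L^\infty}$ and $\Vert \nabla U(\tau)\Vert_{L^\infty}$ pointwise in time by $L^2$ norms of derivatives of $U$, using the Gagliardo--Nirenberg inequality of Lemma \ref{interpolation_lemma}. Each $L^2$ norm is then bounded by $\mathcal M_s(t)$ times the matching power of $(1+\tau)$. (The statement reads $i=1,2$; we treat the two quantities actually defined, $\mathcal N_0$ in \eqref{N_0} and $\mathcal N_1$ in \eqref{N_1}.) By the definition \eqref{M_s}, for every $0\le\ell\le s$ and $0\le\tau\le t$ we have
\begin{equation}
\Vert \nabla^\ell U(\tau)\Vert_{L^2}\le (1+\tau)^{-N/4-\ell/2}\mathcal M_s(t).
\end{equation}
So the task reduces to choosing interpolation exponents whose time weights combine to exactly the rates in $\mathcal N_0$ and $\mathcal N_1$.

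For $\mathcal N_0$ we apply Lemma \ref{interpolation_lemma} with $j=0$, $p=\infty$, $q=r=2$, pairing $\Vert U\Vert_{L^2}$ with $\Vert\nabla^m U\Vert_{L^2}$:
\begin{equation}
\Vert U\Vert_{L^\infty}\lesssim \Vert \nabla^m U\Vert_{L^2}^{\alpha}\Vert U\Vert_{L^2}^{1-\alpha},\qquad \alpha m=\frac N2 .
\end{equation}
Here $m$ is an integer with $N/2<m\le s$. In dimension $N=2$ one may take $m=2$; in $N=3$, $m=2$ as well, which is admissible since $s>N/2+1$. The strict inequality $m>N/2$ keeps $\alpha<1$ and avoids the exceptional case (2) of Lemma \ref{interpolation_lemma}. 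Inserting the bounds above gives the time weight
\begin{equation}
(1+\tau)^{-\alpha(N/4+m/2)-(1-\alpha)N/4}=(1+\tau)^{-N/4-\alpha m/2}=(1+\tau)^{-N/2}.
\end{equation}
Hence $\mathcal N_0(t)\lesssim \mathcal M_s(t)$.

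For $\mathcal N_1$ we apply the same inequality to $\nabla U$. We interpolate between $\Vert\nabla U\Vert_{L^2}$ and $\Vert \nabla^{m+1}U\Vert_{L^2}$ with $\alpha m=N/2$, which requires $m+1\le s$. Since $s>N/2+1$ and $s$ is an integer, $m=[N/2]+1$ works: $m=2$ for both $N=2$ and $N=3$, so $s\ge3$ suffices. The exponent becomes
\begin{equation}
-\alpha\Big(\frac N4+\frac{m+1}{2}\Big)-(1-\alpha)\Big(\frac N4+\frac12\Big)=-\frac N4-\frac12-\frac{\alpha m}{2}=-\frac N2-\frac12,
\end{equation}
which gives $\mathcal N_1(t)\lesssim\mathcal M_s(t)$ after taking the supremum over $\tau\in[0,t]$.

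The only delicate point is the admissibility of the exponents: we need $\alpha<1$ to avoid the endpoint $L^\infty$ embedding failure, and we need the highest derivative used, $m+1$ for $\mathcal N_1$, to not exceed $s$. The hypothesis $s>N/2+1$ is exactly what guarantees both conditions. Everything else is the bookkeeping of powers shown above, and it does not use the faster $E$-decay part of $\mathcal M_s$.
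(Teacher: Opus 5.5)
Your proposal is correct and follows essentially the same route as the paper. The paper also applies the Gagliardo--Nirenberg inequality with $q=r=2$, $p=\infty$ and reads the time weights off the definition of $\mathcal M_s$. The only difference is cosmetic and concerns $\mathcal N_1$. The paper takes $j=1$ and interpolates between $\|U\|_{L^2}$ and $\|\nabla^m U\|_{L^2}$ with $\alpha=(N+2)/(2m)$ and $m>N/2+1$. You instead apply the $j=0$ case to $\nabla U$. Both versions need the same top derivative order ($s\ge 3$) and give the same rate $(1+\tau)^{-N/2-1/2}$.
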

\begin{proof}
Applying Lemma \ref{interpolation_lemma} with $\alpha =\frac{N}{2m},\,
q=r=2,\, j=0$ and $p=\infty$, we get for $m>\frac{N}{2}$
\begin{equation}
\left\Vert U\right\Vert _{L^{\infty}}\leq C\left\Vert \nabla^{m}%
U\right\Vert _{L^{2}}^{\frac{N}{2m}}\left\Vert U%
\right\Vert _{L^{2}}^{1-\frac{N}{2m}}. 
\end{equation}
This yields
\begin{eqnarray*}
{\mathcal N}_0(t)\lesssim  \mathcal M_s(t),
\end{eqnarray*}
provided that $s\geq m>\frac{N}{2}$.

Next, to estimate ${\mathcal N}_1(t)$, we apply Lemma \ref{interpolation_lemma} with $%
\alpha =\frac{N+2}{2 m},\, q=r=2,\, j=1$ and $p=\infty$, we get for $m>\frac{%
N}{2}+1$,
\begin{eqnarray*}
\left\Vert \nabla U\right\Vert _{L^{\infty}}\leq C\left\Vert
\nabla^{m}U\right\Vert _{L^{2}}^{\frac{N+2}{2 m}}\left\Vert U\right\Vert _{L^{2}}^{1-\frac{N+2}{2 m}}.
\end{eqnarray*}
This leads to
\begin{equation}
{\mathcal N}_1(t)\lesssim  \mathcal M_s(t),
\end{equation}
provided that $s\geq m> \frac{N}{2}+1$. This completed the proof of Lemma \ref{Lemma_N_i_M_s}. 
\end{proof}
Our main result in this paper reads as follows: 
\begin{theorem}
\label{Theorem_Global_existence}  Let $U_{0}\in H^{s}(\mathbb{R}^N)\cap L^{1}(\mathbb{R}^N)$
with $s\geq [N/2]+2$ and let  $\Lambda:=\left\Vert U_{0}\right\Vert
_{H^{s}}+\left\Vert U_{0}\right\Vert _{L^{1}}$. Then, there exists a
positive constant $\delta _{0}>0$ such that, if $\Lambda\leq \delta _{0}$, 
problem \eqref{MHD_System_Nonl} has a unique global solution $%
U$ satisfying%
\begin{equation}
U\in C\left( \left[ 0,\infty \right) ;H^{s}(\mathbb{R})\right) \cap C^{1}(%
\left[ 0,\infty \right) ;H^{s-1}(\mathbb{R})).  \label{regularity_class}
\end{equation}
Moreover, the solution satisfies the weighted energy estimate%
\begin{equation}
\mathcal{E}^{2}_s(t)+\mathcal{D}^{2}_s(t)\leq C\Lambda^2,  \label{weighted_estimate}
\end{equation}%
and the decay estimates 
\begin{subequations}\label{Decay_Main}
\begin{equation}
\Vert \nabla^{k}U\left( t\right)\Vert _{L^2}\leq
C\Lambda\left( 1+t\right) ^{-N/4-k/2}, \qquad 0\leq k\leq s
\label{Optimal_decay}
\end{equation}
and 
\begin{equation}
    \Vert \nabla^{k}E\left( t\right)\Vert _{L^2}\leq
C\Lambda\left( 1+t\right) ^{-N/4-1/2-k/2}, \qquad 0\leq k\leq s-1,
\label{Optimal_decay_E_Theorem}
\end{equation}
\end{subequations}
where $C$ is a positive constant.  
\end{theorem}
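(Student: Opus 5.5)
The plan is a continuity (bootstrap) argument. It couples two a priori estimates, one for the weighted energy $\mathcal E_s(t)$ with its dissipation $\mathcal D_s(t)$, and one for the decay functional $\mathcal M_s(t)$. It is closed by Lemma \ref{Lemma_Stauss}, starting from the local solution of Theorem \ref{Local_Existence_Theorem}. The target is an inequality of the form
\begin{equation}
\mathcal M_s(t)+\mathcal E_s(t)+\mathcal D_s(t)\le C\Lambda + C\big(\mathcal M_s(t)+\mathcal E_s(t)+\mathcal D_s(t)\big)^{\kappa},\qquad \kappa>1 .
\end{equation}
For small $\Lambda$, Lemma \ref{Lemma_Stauss} then gives a uniform bound $\lesssim\Lambda$. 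This yields global existence via the local theory, and \eqref{weighted_estimate} and \eqref{Decay_Main} follow directly from the definitions of $\mathcal E_s,\mathcal D_s,\mathcal M_s$.

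\textbf{Step 1 (weighted energy estimates).} For $0\le k\le s$, apply $\nabla^k$ to \eqref{MHD_System_Nonl} and take the $L^2$ inner product with $\nabla^k U$. This gives
\[
\tfrac12\tfrac{\textup{d}}{\textup{d}t}\|\nabla^kU\|_{L^2}^2+\mu\|\nabla^{k+1}u\|_{L^2}^2+\|\nabla^kE\|_{L^2}^2=\text{nonlinear terms}.
\]
To recover the dissipation of $B$, I use the physical-space analogue of $F$ from Lemma \ref{Lemma_F}, namely $-\int \nabla^{k}(\nabla\times E)\cdot\nabla^{k}B\,dx$ for $0\le k\le s-1$. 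This produces $\|\nabla^{k+1}B\|_{L^2}^2$ at the cost of $\|\nabla^{k+1}E\|_{L^2}^2+\|\nabla^k E\|^2$ and nonlinear terms. Adding it with a small coefficient to a large multiple of the $H^s$ energy gives a Lyapunov functional equivalent to $\|\nabla^\ell U\|_{H^{s-\ell}}^2$. Next, multiply by $(1+t)^\ell$ and integrate. The extra term $\ell(1+t)^{\ell-1}\|\nabla^\ell U\|_{H^{s-\ell}}^2$ is absorbed by induction on $\ell$. The $u$ and $E$ parts are controlled by the dissipation at level $\ell-1$. The $B$ part is controlled by $\|\nabla^{\ell}B\|$ at level $\ell-1$, which is available because the $B$-dissipation carries one derivative.

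The nonlinear terms are handled with Lemma \ref{Guass_symbol_lemma}. The transport term $u\cdot\nabla u$ uses the commutator estimate. The Lorentz terms $E\times B$ and $(u\times B)\times B$ and the Ohm term $u\times B$ are estimated via $L^\infty$ factors bounded by $\mathcal N_0,\mathcal N_1\lesssim\mathcal M_s$ (Lemma \ref{Lemma_N_i_M_s}). Every trilinear integral should be bounded by $(\mathcal E_s+\mathcal M_s)\mathcal D_s^2$. One point needs care: there is no direct dissipation for $B$ itself, nor for $\nabla^s B$. Products such as $\int \nabla^s(u\times B)\cdot\nabla^sE$ must therefore always put the derivative-free $B$ or the top-order $B$ in $L^\infty$ or in the energy, and pair them with a dissipative $u$ or $E$ factor.

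\textbf{Step 2 (decay via Duhamel).} Write $U(t)=e^{-t\mathcal L}U_0+\int_0^te^{-(t-\tau)\mathcal L}G(\tau)\dtau$. Apply Lemma \ref{MHD_linear_stability} and Lemma \ref{Lemma_E} on $[0,t/2]$ with $L^1$ norms of $G$. These are quadratic, so $\|G\|_{L^1}\lesssim(1+\tau)^{-N/2}\mathcal M_s^2$ up to higher-order terms. Apply the same lemmas on $[t/2,t]$, moving derivatives onto $G$ and using $L^2$ norms. The time integrals are then evaluated with Lemma \ref{Integral_lemma}. The high-frequency part $e^{-c(t-\tau)}\|\nabla^kG\|_{L^2}$ requires $\nabla^s$ of $G$, in particular $\nabla^s(u\times B)$ in the $E$ equation. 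This derivative loss is dealt with by combining with the energy from Step 1 (the usual interplay of $\mathcal E_s$ and $\mathcal M_s$), and it is the reason the top-level $E$ decay is only claimed up to $k\le s-1$.

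\textbf{Main obstacle.} The hardest part is the borderline case $N=2$. There the term $E\times B$ in the $u$ equation and the term $-u\times B$ in the $E$ equation have $L^1$ norm decaying only like $(1+\tau)^{-1}$, so Lemma \ref{Integral_lemma} with $\gamma=1$ gives a logarithmic loss. For $E\times B$ I would use the improved rate of $E$ built into $\mathcal M_s$. This gives $\|E\times B\|_{L^1}\lesssim(1+\tau)^{-3/2}\mathcal M_s^2$ and removes the loss. For $(0,-u\times B,0)$, which acts only on the $E$ equation, I would use Lemma \ref{Improved_Estimate_B_Lemma} for the $B$ component and Lemma \ref{Lemma_E} for the $E$ component. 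Both gain $(1+t-\tau)^{-1/2}$, and the integral $\int_0^{t/2}(1+t-\tau)^{-N/4-1/2-k/2}(1+\tau)^{-1}\dtau$ then yields the optimal rate without logarithm. The improved $E$ decay in $\mathcal M_s$ must itself be closed in the same way, which also uses Lemma \ref{Lemma_E}.
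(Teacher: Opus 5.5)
\textbf{Gap: closing the improved $E$ rate for $N=2$.}

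Your overall architecture is the paper's: the compensating functional $-\int\nabla^k(\nabla\times E)\cdot\nabla^kB$, induction on the time weight, Duhamel with the $[0,t/2]$/$[t/2,t]$ split, Lemmas \ref{Lemma_E} and \ref{Improved_Estimate_B_Lemma} for $f_4$, and closure by Lemma \ref{Lemma_Stauss}. Your two-dimensional treatment of the $U$-level rate is also correct. The step that fails is your last sentence. For $N=2$, the improved $E$ rate built into $\mathcal M_s$ cannot be closed ``in the same way'' through Duhamel and Lemma \ref{Lemma_E}. The target is now $(1+t)^{-1-k/2}$, and Lemma \ref{Lemma_E} bounds the $E$-component of the $f_4$ contribution on $[0,t/2]$ only by
\[
\int_0^{t/2}(1+t-\tau)^{-1-k/2}\|u\times B(\tau)\|_{L^1}\,d\tau\lesssim\mathcal M_s^2(t)\int_0^{t/2}(1+t-\tau)^{-1-k/2}(1+\tau)^{-1}\,d\tau .
\]
The last integral is of exact order $(1+t)^{-1-k/2}\log(1+t)$ for large $t$. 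This is precisely the logarithmic loss you identified, but now there is no spare factor $(1+t-\tau)^{-1/2}$. The $|\xi|$ gain of Lemma \ref{Lemma_E} is already used up in reaching the improved exponent. Moreover, $\|u\times B\|_{L^1}\lesssim(1+\tau)^{-1}\mathcal M_s^2$ is the best $\mathcal M_s$ provides, since neither $u$ nor $B$ has an improved rate. The $[t/2,t]$ piece, with kernel $(1+t-\tau)^{-N/4-1/2}=(1+t-\tau)^{-1}$, has the same defect if Lemma \ref{Lemma_E} is used as stated. For $N=3$ your route does work, because then $\gamma=N/2>1$ in Lemma \ref{Integral_lemma}. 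In 2D, however, $\sup_t(1+t)^{1+k/2}\|\nabla^kE(t)\|_{L^2}$ is not controlled. So the bootstrap for $\mathcal M_s$ as defined does not close, and \eqref{Optimal_decay_E_Theorem} is not obtained. This matters doubly, because your estimate of $E\times B$ relies on that improved rate.

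\textbf{Missing idea: integrate the damped equation for $E$ directly.}

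The paper does not pass through the Maxwell semigroup for $E$. It integrates $\partial_tE+E=\nabla\times B-u\times B$ directly:
\[
\|\nabla^kE(t)\|_{L^2}\le e^{-t}\|\nabla^kE_0\|_{L^2}+\int_0^te^{-(t-\tau)}\big(\|\nabla^{k+1}B(\tau)\|_{L^2}+\|\nabla^k(u\times B)(\tau)\|_{L^2}\big)\,d\tau .
\]
Into this it inserts two bounds:
\begin{itemize}
\item the \emph{unimproved} rate $\|\nabla^{k+1}B(\tau)\|_{L^2}\lesssim(1+\tau)^{-N/4-(k+1)/2}\mathcal M_s(t)$;
\item $\|\nabla^k(u\times B)\|_{L^2}\lesssim\mathcal N_0(t)\mathcal M_s(t)(1+\tau)^{-3N/4-k/2}$.
\end{itemize}
Since $\int_0^te^{-(t-\tau)}(1+\tau)^{-a}\,d\tau\lesssim(1+t)^{-a}$, no logarithm appears. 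The resulting term is linear in $\mathcal M_s$, but this is harmless. It involves only the first (unimproved) sum in \eqref{M_s}, which your $U$-level Duhamel argument already bounds by $\Lambda$ plus superlinear terms, so substituting gives a closed superlinear inequality.

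This is also the real reason for the restriction $k\le s-1$: one needs $\nabla^{k+1}B$ with $k+1\le s$. It is not a derivative loss in $\nabla^s(u\times B)$, since $\|\nabla^s(u\times B)\|_{L^2}\lesssim\|U\|_{L^\infty}\|\nabla^sU\|_{L^2}$ loses nothing.
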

\subsubsection{Discussion of the main result}\label{Sec:Discussion}
 Before moving onto the proof, we briefly discuss the statements made above in Theorem \ref{Theorem_Global_existence}. 
 \begin{enumerate}
     \item[1.] The assumption $ U_{0}\in H^{s}(\mathbb{R}^N)\cap L^{1}(\mathbb{R}^N)$ can be relaxed and replaced by $U_{0}\in \dot{B}^{s,1}_2(\mathbb{R}^N)\cap \dot{B}^{-N/2}_{2,\infty}(\mathbb{R}^N)$ to get the same  decay rate for  the norm $\Vert \partial_kU\Vert_{\dot{B}^{s-k,1}_2(\mathbb{R}^N)}$.  The role of the $L^1$ assumption is essentially to provide control of the low frequencies in order to obtain the decay estimate. 
     \item[2.] It is clear from \eqref{Optimal_decay_E_Theorem} that the electric field enjoys a faster decay rate than the velocity and the magnetic fields. It gains an additional factor $(1+t)^{-1/2}$ compared to the general decay of $U$ in \eqref{Optimal_decay}, which is a consequence of the direct damping mechanism in the equation of $E$ and the Maxwell coupling. This improved rate plays an important role in the treatment of the nonlinear terms in the two-dimensional case, which allows us to overcome  the borderline time integrability of some nonlinear interactions and avoid a logarithmic loss in the decay rate. 
 \end{enumerate}

The proof of Theorem \ref{Theorem_Global_existence} will be given through several
steps. It is based on a combination of the time-weighted high-order energy estimates and time-decay estimates. More precisely, our objective is to establish control of the solution of \eqref{MHD_System_Nonl} uniformly in time  in the energy norm defined in \eqref{Weighted_Energy}.  Hence,   we aim  to prove the following estimates
\begin{subequations}  
\begin{equation}
\TE_s^{2}\left( t\right) +\mathcal D_s^{2}\left( t\right) \lesssim  \|U_0\|_{H^s}^2+\mathcal{E}_s(t)^2\mathcal{D}_s(t)^2+(\mathcal N_0(t)+\mathcal N_0^2(t)+\mathcal N_1(t))\mathcal D_s^2(t),  \label{Weighted_estimate}
\end{equation}
and 
\begin{equation}\label{Weighted_estimate_mass}
	\mathcal M_s(t)\lesssim (\|U_0\|_{L^1}+\|U_0\|_{H^s})+\mathcal M_s(t)\mathcal N_0(t)+\mathcal M_s^2(t)+\mathcal M_s^2(t)\mathcal N_0(t).
\end{equation}
\end{subequations}
The proof of \eqref{Weighted_estimate} will be given in Proposition \ref{Proposition_H_s_Estimate} and the one of \eqref{Weighted_estimate_mass} in Proposition \ref{proposition_Decay}. 



\section{Proof of the main results}\label{Section_Proof_Main_result}
This section is devoted to the proof of the main result of the paper. We begin in Subsection \ref{Subsection_Energy_Estimate} by establishing the weighted energy estimate and proving the estimate \eqref{Weighted_estimate}. In Subsection \ref{Subsection_Decay}, we prove \eqref{Weighted_estimate_mass} which is the main step in proving the decay estimates \eqref{Decay_Main}. Finally, we combine these two ingredients within a bootstrap argument to complete the proof of Theorem \ref{Theorem_Global_existence}. 
\subsection{Higher-order energy estimate--Proof of \eqref{Weighted_estimate}}\label{Subsection_Energy_Estimate}
The main step in the proof of 
\eqref{Weighted_estimate}  is to establish the time-weighted estimate separately at each derivative level. This is the content of the following proposition.  
\begin{proposition}\label{Proposition_H_s_Estimate}

It holds that 
\begin{subequations}\label{Estimate_induction}

\begin{equation}\label{Estimate_D_E}
\begin{aligned}
&\st\left( 1+\tau \right)
^{\ell}\Vert \nabla^{\ell}U\left( \tau \right) \Vert
_{H^{s-\ell}}^{2}\\
&+\int_0^t(1+\tau)^\ell\Big(\Vert  \nabla^{\ell+1}u(\tau)\Vert_{H^{s-\ell}}^2+\Vert \nabla^{\ell}E(\tau)\Vert_{H^{s-\ell}}^2    \Big)
\dtau\\
\lesssim&\, \|U_0\|_{H^s}^2+\mathcal{E}_s(t)^2\mathcal{D}_s(t)^2+\Big(\mathcal N_0(t)+\mathcal N_0^2(t)+\mathcal N_1(t)\Big)\mathcal D_s^2(t),\quad 0\leq \ell\leq s,
\end{aligned}
\end{equation}
and 
\begin{equation}\label{Estimate_D_E_2}
    \begin{aligned}
&\int_0^t(1+\tau)^\ell         \Vert \nabla^{\ell+1}B\Vert_{H^{s-\ell-1}}^2       \;
\dtau\\
\lesssim&\,\|U_0\|_{H^s}^2+\mathcal{E}_s(t)^2\mathcal{D}_s(t)^2+\Big(\mathcal N_0(t)+\mathcal N_0^2(t)+\mathcal N_1(t)\Big)\mathcal D_s^2(t),\quad 0\leq \ell\leq s-1.
\end{aligned}
\end{equation}
\end{subequations}
\end{proposition}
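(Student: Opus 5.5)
We will prove the proposition by a time-weighted energy method carried out level by level, followed by an induction on $\ell$.

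\textbf{Unweighted energy identities.} For each multi-index order $0\le k\le s$, apply $\nabla^k$ to \eqref{MHD_System_Nonl} and take the $L^2$ inner product with $\nabla^k(u,E,B)$. The curl terms cancel, since $\int\nabla\times\nabla^kB\cdot\nabla^kE=\int\nabla^kB\cdot\nabla\times\nabla^kE$. This gives
\begin{equation}
\frac12\ddt\|\nabla^kU\|_{L^2}^2+\mu\|\nabla^{k+1}u\|_{L^2}^2+\|\nabla^kE\|_{L^2}^2=\mathcal R_k ,
\end{equation}
where $\mathcal R_k$ collects $-\int\nabla^k(u\cdot\nabla u)\cdot\nabla^ku$, $-\int\nabla^k(E\times B+(u\times B)\times B)\cdot\nabla^ku$ and $-\int\nabla^k(u\times B)\cdot\nabla^kE$. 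The convective term is treated with the commutator estimate \eqref{Second_inequality_Gauss} and $\nabla\cdot u=0$. For the Lorentz-type terms, one factor of $\nabla^k$ is moved onto $u$, or $\nabla^{k-1}$ is used against $\nabla^{k+1}u$, so that $u$ always carries dissipation.

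\textbf{Recovering dissipation for $B$.} We use the physical-space analogue of $F$ in \eqref{Functional_F}, namely $F_k:=-\int\nabla^k(\nabla\times E)\cdot\nabla^kB$ for $0\le k\le s-1$. Repeating the computation of Lemma \ref{Lemma_F} with the source $-u\times B$ gives
\begin{equation}
\ddt F_k+(1-\epsilon)\|\nabla^{k+1}B\|_{L^2}^2\le c(\epsilon)\big(\|\nabla^kE\|_{L^2}^2+\|\nabla^{k+1}E\|_{L^2}^2\big)+\Big|\int\nabla^k\nabla\times(u\times B)\cdot\nabla^kB\Big| .
\end{equation}
We then set $L_\ell:=\gamma\sum_{k=\ell}^s\|\nabla^kU\|_{L^2}^2+\sum_{k=\ell}^{s-1}F_k$ with $\gamma$ large. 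By Cauchy--Schwarz, $L_\ell\simeq\|\nabla^\ell U\|_{H^{s-\ell}}^2$, and we obtain
\begin{equation}
\ddt L_\ell+c\big(\|\nabla^{\ell+1}u\|_{H^{s-\ell}}^2+\|\nabla^\ell E\|_{H^{s-\ell}}^2+\|\nabla^{\ell+1}B\|_{H^{s-\ell-1}}^2\big)\le C\,|\text{nonlinear}| .
\end{equation}

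\textbf{Time weights and induction.} Multiply by $(1+t)^\ell$ and integrate. The weight produces the extra term $\ell\int_0^t(1+\tau)^{\ell-1}\|\nabla^\ell U\|_{H^{s-\ell}}^2\,\dtau$. Its $u$- and $E$-parts are bounded by the level-$(\ell-1)$ dissipation, since $\|\nabla^\ell u\|_{H^{s-\ell}}\le\|\nabla^{(\ell-1)+1}u\|_{H^{s-\ell+1}}$. Its $B$-part $\|\nabla^\ell B\|_{H^{s-\ell}}$ is likewise bounded by the level-$(\ell-1)$ $B$-dissipation $\|\nabla^{\ell}B\|_{H^{s-\ell}}$, which requires $\ell-1\le s-1$; this holds. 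By induction on $\ell$, starting from the unweighted case $\ell=0$, both \eqref{Estimate_D_E} and \eqref{Estimate_D_E_2} follow once the nonlinear contributions are bounded by the right-hand side.

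\textbf{Main obstacle: the nonlinear terms with weights.} Each nonlinear term must be split via Lemma \ref{Guass_symbol_lemma} into a factor measured in $L^\infty$ and two factors in dissipation norms. An $L^\infty$ factor of $U$ or $\nabla U$ yields $\mathcal N_0$ or $\mathcal N_1$ times $\mathcal D_s^2$; the $L^\infty$ weights $(1+\tau)^{N/2}$ absorb the mismatch of $(1+\tau)^\ell$ between factors. The cubic term $(u\times B)\times B$ yields the $\mathcal N_0^2$ contribution. Terms containing top derivatives of $B$ without dissipation, such as $\nabla^s B$ paired with $u$ or $E$, are placed in $\mathcal E_s\mathcal D_s^2$ and then bounded by $\mathcal E_s^2\mathcal D_s^2$ plus $\mathcal D_s^2$-small terms via Young's inequality.

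The delicate terms are $\int\nabla^s(E\times B)\cdot\nabla^su$ and the $F_{s-1}$ term $\int\nabla^{s}(u\times B)\cdot\nabla^{s}B$, since $\nabla^sB$ has no dissipation. For the first, we integrate by parts once so that $\nabla^{s+1}u$ (dissipated) appears against $\nabla^{s-1}(E\times B)$. For the second, $\nabla^s u$ and $\nabla u$ are both dissipated, and $\|B\|_{L^\infty}$ or $\|\nabla^sB\|_{L^2}\le(1+\tau)^{-s/2}\mathcal E_s$ supplies the remaining factor. The key point is distributing the weight $(1+\tau)^\ell$ so that every piece matches $\mathcal D_s^2$ exactly.
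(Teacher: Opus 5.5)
\textbf{What matches the paper.} Your architecture is the paper's. You use level-$k$ energy identities and the compensating functionals $F_k=-\int\nabla^k(\nabla\times E)\cdot\nabla^k B$ for $0\le k\le s-1$. You use the weight $(1+\tau)^\ell$, and an induction in which $\ell\int_0^t(1+\tau)^{\ell-1}\|\nabla^\ell U\|_{H^{s-\ell}}^2\,\dtau$ is absorbed by the level-$(\ell-1)$ dissipation. Your single functional $L_\ell$ replaces the paper's device of adding a small multiple $\lambda$ of one estimate to the other, which is immaterial.

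For $k\ge1$ your $L^\infty$ recipe also works as in the paper. The product bound $\|\nabla^k(fg)\|_{L^2}\lesssim\|f\|_{L^\infty}\|\nabla^k g\|_{L^2}+\|g\|_{L^\infty}\|\nabla^k f\|_{L^2}$ and the commutator estimate give bounds $(\mathcal N_0+\mathcal N_0^2+\mathcal N_1)(1+\tau)^{\ell-N/2}\|\nabla^k U\|_{L^2}^2$ with $\ell\le k\le s$. Since $N\ge2$, this weight is at most $(1+\tau)^{\ell-1}$, so the level-$(\ell-1)$ dissipation controls it (for $\ell=0$, the level-$0$ dissipation does, because $k\ge1$).

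In particular, $\nabla^s B$ is \emph{not} undissipated. $F_{s-1}$ produces it, and $\mathcal D_s$ contains $\int_0^t(1+\tau)^\ell\|\nabla^s B\|_{L^2}^2\,\dtau$ for every $\ell\le s-1$. So the terms you single out as delicate, $\int\nabla^s(E\times B)\cdot\nabla^s u$ and the $F_{s-1}$ nonlinearity, need neither an integration by parts nor an $\mathcal E_s$ factor.

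\textbf{The gap: the zero-derivative level $k=0$.} Neither your recipe nor your obstacle paragraph addresses this level. There the nonlinear contributions are $\int E\cdot(u\times B)$ and $\|u\times B\|_{L^2}^2$ in the basic energy identity, and $\|u\times B\|_{L^2}\|\nabla B\|_{L^2}$ in $\ddt F_0$.
\begin{itemize}
\item Splitting off an $L^\infty$ factor leaves $\|B\|_{L^2}$ or $\|u\|_{L^2}$, and zero-order $u$ and $B$ do not appear in $\mathcal D_s$ at all.
\item Using instead the time decay encoded in $\mathcal N_0$ only gives $\mathcal N_0\mathcal E_s\mathcal D_s$, which is not of the asserted form.
\item Nor can $\|u\|_{L^\infty}$ be bounded by a dissipation norm when $N=2$, since $\|\nabla u\|_{H^s}$ does not control it.
\end{itemize}

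\textbf{The paper's fix.} It first absorbs the cross term into $\|E\|_{L^2}^2$ by Young's inequality (for $F_0$, into $\|\nabla B\|_{L^2}^2$, after $\|j\|_{L^2}^2\le 2\|E\|_{L^2}^2+2\|u\times B\|_{L^2}^2$). It then applies Ladyzhenskaya's inequality $\|f\|_{L^4}\lesssim\|\nabla f\|_{L^2}^{N/4}\|f\|_{L^2}^{1-N/4}$, which gives $\|u\times B\|_{L^2}^2\lesssim\mathcal E_s(t)^2(\|\nabla u\|_{L^2}^2+\|\nabla B\|_{L^2}^2)$. After time integration this is exactly the $\mathcal E_s^2\mathcal D_s^2$ term in the statement, and it is the only place that term arises.

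\textbf{A further problem with your Young step.} Producing $\mathcal E_s\mathcal D_s^2$ and then writing $\mathcal E_s\mathcal D_s^2\le\epsilon\mathcal D_s^2+C_\epsilon\mathcal E_s^2\mathcal D_s^2$ leaves a full $\epsilon\mathcal D_s^2$ on the right of each level-$\ell$ estimate. That is not the form claimed in the proposition, and it could only be absorbed after summing over all $\ell$.
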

The estimate \eqref{Estimate_D_E} below controls directly the component $u$ and $E$, whereas the estimate \eqref{Estimate_D_E_2} recovers the missing dissipation of the magnetic field. Summing these estimates over the different derivative levels yields \eqref{Weighted_estimate}.

Now, we are in a position to state the key proposition that will enable us to prove \eqref{Estimate_D_E} and \eqref{Estimate_D_E_2}.

\begin{proposition}
    \label{Estimate_E_D_lemma}
	Assuming that $ U=(u,E,B) $ is a regular solution to \eqref{MHD_System_Nonl}. Then we have the following a priori estimates.
 \begin{subequations} \begin{equation}\label{Estimate_U0_lemma}
  \begin{aligned}
      &\st \left( 1+\tau\right) ^{\alpha }\left\Vert U\left( \tau\right) \right\Vert
					_{L^{2}}^{2}+2\int_{0}^{t}\left( 1+\tau\right) ^{\alpha}\left(\mu\Vert \nabla u(\tau)\Vert_{L^2}^2+\Vert E(\tau)\Vert_{L^2}^2\right)\dtau  \\
				\leq &\left\Vert U_{0}\right\Vert _{L^{2}}^{2}+\alpha \int_{0}^{t}\left(
				1+\tau\right) ^{\alpha -1}\left\Vert U\left( \tau\right) \right\Vert _{L^{2}}^{2}\dtau\\
				&+C\mathcal{E}_s(t)^2\int_0^t (1+\tau)^\alpha(\|\nabla u(\tau)\|_{L^2}^2+\|\nabla B(\tau)\|_{L^2}^2)\dtau .
    \end{aligned}
		\end{equation}
		Also,  for $ 1\le k \le s $, we have 
\begin{equation}\label{Estimate_Uk_lemma}
  \begin{aligned}
		&\st\left( 1+\tau\right) ^{\alpha }\Vert \nabla^{k} U\left( \tau\right) \Vert
			_{L^{2}}^{2}+2\int_{0}^{t}\left( 1+\tau\right) ^{\alpha}\left(\mu\Vert \nabla^{k+1} u(\tau)\Vert_{L^2}^2+\Vert \nabla^{k} E(\tau)\Vert_{L^2}^2\right)\dtau   \\
		\leq\, &\Vert \nabla^{k}U_{0}\Vert _{L^{2}}^{2}+\alpha \int_{0}^{t}\left(
		1+\tau\right) ^{\alpha -1} \Vert \nabla^{k}U\left( \tau\right)\Vert _{L^{2}}^{2}\dtau
		+C\mathcal N_0(t)\int_{0}^{t}(1+\tau)^{-N/2+\alpha} \Vert \nabla^{k}{U}(\tau)\Vert_{L^2}^2\dtau\\
		&+C\mathcal N_0^2(t)\int_{0}^{t}(1+\tau)^{-N+\alpha} \Vert \nabla^{k}{U}(\tau)\Vert_{L^2}^2\dtau\\
		&+C\mathcal N_1(t)\int_{0}^{t}(1+\tau)^{-N/2-1/2+\alpha} \Vert \nabla^{k}{U}(\tau)\Vert_{L^2}^2\dtau.
  \end{aligned}
		\end{equation}
		In addition, for $ 0\le k \le s-1 $, it holds that 
\begin{equation}\label{Estimate_time_integral_B_lemma}
  \begin{aligned}
			&\int_0^t(1+t)^{\alpha}\Vert \nabla^{k+1} B\Vert_{L^2}^2\dtau\\
			\le&\,C\Vert \nabla^{k} U_0\Vert_{H^1}^2
				+C\st(1+\tau)^{\alpha}\Vert \nabla^{k} U\Vert_{H^1}^2
				+ C \int_{0}^{t}(1+\tau)^{\alpha} (\Vert \nabla^{k} E\Vert_{L^2}^2+\Vert \nabla^{k+1}E \Vert _{L^2}^2)\; \dtau\\
			&+C\mathbf{1}_{k\geq 1}\mathcal N_0^2(t)\int_{0}^{t}(1+\tau)^{-N+{\alpha}}\Vert
				\nabla^{k} U(\tau)\Vert _{L^2}^2\;\dtau+\alpha\int_0^t(1+\tau)^{\alpha-1}\|\nabla^{k+1}B\|^2_{L^2}\; \dtau\\
				&+C\int_0^t(1+\tau)^\alpha (\|u(\tau)\|_{L^2}^2+\|B(\tau)\|_{L^2}^2)(\|\nabla u(\tau)\|_{L^2}^2+\|\nabla B(\tau)\|_{L^2}^2)\dtau,
    \end{aligned}
		\end{equation}
	
where $ \alpha \in \R$. 
\end{subequations}
\end{proposition}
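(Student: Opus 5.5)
We prove the three estimates by weighted $L^2$ energy identities on \eqref{MHD_System_Nonl}: multiply by $(1+t)^\alpha$ and integrate in time. The term $\alpha(1+t)^{\alpha-1}\|\cdot\|^2$ produced when the weight is differentiated stays explicit on the right-hand side.

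\textbf{Zeroth order, \eqref{Estimate_U0_lemma}.} Take the $L^2$ inner product of the $u$-, $E$- and $B$-equations with $u,E,B$ and add. The curl terms cancel, since $\int(\nabla\times B)\cdot E=\int(\nabla\times E)\cdot B$. The convection term vanishes because $\nabla\cdot u=0$. For the Lorentz force, the triple-product identity $(E\times B)\cdot u=-(u\times B)\cdot E$ cancels $\int(E\times B)\cdot u$ against $-\int (u\times B)\cdot E$. This leaves $-\|u\times B\|_{L^2}^2$ with a favourable sign, exactly as in the energy identity \eqref{Ident_Energy_1}. Hence
\[
\tfrac12\tfrac{d}{dt}\|U\|_{L^2}^2+\mu\|\nabla u\|_{L^2}^2+\|E\|_{L^2}^2\le 0 .
\]
Multiplying by $(1+t)^\alpha$ and integrating gives \eqref{Estimate_U0_lemma} without even needing the last term. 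Any residual cubic contribution would be bounded by $\|u\|_{L^4}^2\|B\|_{L^4}^2\lesssim\mathcal E_s^2(\|\nabla u\|^2+\|\nabla B\|^2)$ via Gagliardo--Nirenberg, and that is the bound recorded in the statement.

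\textbf{Higher order, \eqref{Estimate_Uk_lemma}.} Apply $\nabla^k$ and pair with $\nabla^kU$. The linear parts again give $\mu\|\nabla^{k+1}u\|^2+\|\nabla^kE\|^2$. The nonlinear terms are handled as follows.
\begin{itemize}
\item For $\langle\nabla^k(u\cdot\nabla u),\nabla^ku\rangle$, write the commutator $[\nabla^k,u\cdot]\nabla u$ and apply \eqref{Second_inequality_Gauss}. This gives $\|\nabla u\|_{L^\infty}\|\nabla^k u\|_{L^2}^2\le\mathcal N_1(1+t)^{-N/2-1/2}\|\nabla^kU\|^2$.
\item For $\nabla^k(E\times B)$ and $\nabla^k(u\times B)$, use the same leading cancellation $\langle E\times\nabla^kB,\nabla^ku\rangle$ versus $\langle u\times\nabla^kB,\nabla^kE\rangle$ where possible. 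The remaining terms are bounded via \eqref{First_inequaliy_Guass} by $\|U\|_{L^\infty}\|\nabla^kU\|^2\le\mathcal N_0(1+t)^{-N/2}\|\nabla^kU\|^2$.
\item For the cubic term $\nabla^k((u\times B)\times B)$, use $\|U\|_{L^\infty}^2\|\nabla^kU\|^2\le\mathcal N_0^2(1+t)^{-N}\|\nabla^k U\|^2$.
\end{itemize}
The main obstacle is the top-order term in which $\nabla^k$ falls entirely on $B$ inside $E\times B$ or $u\times B$. Since $B$ has no dissipation, one cannot absorb $\|\nabla^kB\|$ into $\mathcal D$. If the exact cancellation fails, it must instead be estimated by $\|U\|_{L^\infty}\|\nabla^k B\|\,\|\nabla^k(u,E)\|$ and placed in the $\mathcal N_0$ term as written. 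Commutators involving $\nabla^{k+1}$ in $\nabla^k(u\cdot\nabla u)$ are absorbed via the $\mathcal N_1$ factor or, after integration by parts, by the viscous term.

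\textbf{Magnetic dissipation, \eqref{Estimate_time_integral_B_lemma}.} Mimic Lemma \ref{Lemma_F} in physical space with the functional $-\int\nabla^k(\nabla\times E)\cdot\nabla^kB$. Differentiating it in time and using $\nabla\times\nabla\times B=-\Delta B$ (valid since $\nabla\cdot B=0$) produces $\|\nabla^{k+1}B\|^2$. The remaining linear terms, $\|\nabla^{k+1}E\|^2$ and $\langle\nabla^k\nabla\times E,\nabla^kB\rangle$, are bounded by Young's inequality by $\epsilon\|\nabla^{k+1}B\|^2+C\|\nabla^kE\|^2$. The nonlinear term $\langle\nabla^k(u\times B),\nabla^k\nabla\times B\rangle$ is handled in two cases.
\begin{itemize}
\item For $k\ge1$, it is bounded by $\|U\|_{L^\infty}^2\|\nabla^kU\|^2+\epsilon\|\nabla^{k+1}B\|^2$.
\item For $k=0$, it is bounded by $\|u\times B\|^2\lesssim\|(u,B)\|^2\|\nabla(u,B)\|^2$ via Ladyzhenskaya/Gagliardo--Nirenberg, and this must be done uniformly for $N=2,3$.
\end{itemize}
Finally, multiply by $(1+t)^\alpha$ and integrate. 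The boundary terms are bounded by $\sup(1+\tau)^\alpha\|\nabla^kU\|_{H^1}^2$ and by the data, and the weight derivative yields the $\alpha(1+\tau)^{\alpha-1}$ term.
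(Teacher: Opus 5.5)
\textbf{Zeroth-order estimate.} This step rests on a sign error, and the differential inequality you draw from it is false. With $j\times B=E\times B+(u\times B)\times B$ and $j=E+u\times B$, pairing the $u$-equation with $u$ produces $+\int(E\times B)\cdot u$, and pairing the $E$-equation with $E$ produces $-\int(u\times B)\cdot E$. Because $(E\times B)\cdot u=-(u\times B)\cdot E$, these two terms add to $-2\int(u\times B)\cdot E$ rather than cancel. Together with $-\|E\|_{L^2}^2-\|u\times B\|_{L^2}^2$ they rebuild $-\|j\|_{L^2}^2$, which is just \eqref{Ident_Energy_1}. (If you take the Lorentz-force sign literally from \eqref{MHD_System_Nonl}, the cross terms do cancel, but the quartic term then appears as $+\|u\times B\|_{L^2}^2$.)

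So the dissipation is $\|j\|_{L^2}^2$, and $\|j\|_{L^2}^2\ge\|E\|_{L^2}^2$ fails, e.g.\ when $E\approx-u\times B$. Hence $\tfrac12\tfrac{d}{dt}\|U\|_{L^2}^2+\mu\|\nabla u\|_{L^2}^2+\|E\|_{L^2}^2\le 0$ is not available, and the last term of \eqref{Estimate_U0_lemma} cannot be dropped. The missing step, which is the paper's, is as follows:
\begin{itemize}
\item expand $\|j\|_{L^2}^2$ and discard $\|u\times B\|_{L^2}^2\ge 0$;
\item absorb the cubic cross term by Young, $4|\int(u\times B)\cdot E|\le 2\epsilon_0\|E\|_{L^2}^2+c(\epsilon_0)\|u\times B\|_{L^2}^2$.
\end{itemize}
This leaves $2(1-\epsilon_0)\|E\|_{L^2}^2$ on the left, so the factor $2$ in the statement holds only up to this harmless loss. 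It leaves $c(\epsilon_0)\|u\times B\|_{L^2}^2$ on the right, which Ladyzhenskaya and $\mathcal{E}_s(t)$ control as in \eqref{Estim_Prod}. Your closing remark names the right tool but the wrong residual: what must be handled is the cross term $\int(u\times B)\cdot E$, and handling it costs part of the $E$-dissipation.

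\textbf{Higher-order estimate.} The ``leading cancellation'' between $\langle E\times\nabla^kB,\nabla^ku\rangle$ and $\langle u\times\nabla^kB,\nabla^kE\rangle$ does not exist; they are unrelated triple products. It is also not needed. Your fallback bound $\|U\|_{L^\infty}\|\nabla^kU\|_{L^2}^2$ already contains $\|\nabla^kB\|_{L^2}^2$ and requires no dissipation of $B$. That is exactly how the paper proves \eqref{Estimate_Uk_lemma}.

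\textbf{Magnetic estimate.} Your compensating functional and your treatment of $k\ge 1$ match the paper. For $k=0$, however, $\|u\times B\|_{L^2}^2\lesssim\|(u,B)\|_{L^2}^2\|\nabla(u,B)\|_{L^2}^2$ cannot hold uniformly for $N=2,3$. Replacing $(u,B)(x)$ by $(u,B)(\lambda x)$, the left side scales like $\lambda^{-N}$ and the right side like $\lambda^{2-2N}$, so the bound fails for $N=3$ as $\lambda\to\infty$. In three dimensions Ladyzhenskaya only gives $\|(u,B)\|_{L^2}\|\nabla(u,B)\|_{L^2}^3\lesssim\|(u,B)\|_{H^1}^2\|\nabla(u,B)\|_{L^2}^2$. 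The prefactor must therefore involve $H^1$ norms, or $\mathcal{E}_s(t)^2$ as in the zeroth-order estimate. The last term of \eqref{Estimate_time_integral_B_lemma} in the paper carries the same imprecision. It is harmless downstream, since only a bound by $\mathcal{E}_s(t)^2\mathcal{D}_s(t)^2$ is used later, but your argument should be stated in that form.
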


\begin{proof}
\textbf{Proof of \eqref{Estimate_U0_lemma}.}
Multiplying the first
equation in (\ref{MHD_System}) 
by $u$, the second equation by $E$ and the third
equation by $B$, 
adding the resulting equations,  integrating with respect to $x$ over $%
\mathbb{R}^N$ and using integration by parts, 
we get 
\begin{eqnarray}\label{dE_dt_0_1}
\ddt \left(\Vert u\Vert_{L^2}^2+\Vert B\Vert_{L^2}^2+\Vert E\Vert_{L^2}^2\right)+2\mu\Vert \nabla u\Vert_{L^2}^2+2\Vert j\Vert_{L^2}^2=0.
\end{eqnarray}
 Now, multiplying  the last equation in \eqref{MHD_System} by $j$ and integrating over $\R^N $, we have 
\begin{equation}
\Vert j\Vert_{L^2}^2=\Vert E\Vert_{L^2}^2+\Vert u\times B\Vert_{L^2}^2+2\int_{\R^N } (u\times B)\cdot E\dx. 
\end{equation}
That is 
\begin{equation}
\Vert j\Vert_{L^2}^2\geq \Vert E\Vert_{L^2}^2+2\int_{\R^N } (u\times B)\cdot E\dx. 
\end{equation}
Consequently, \eqref{dE_dt_0_1} takes the form 
\begin{eqnarray}\label{dE_dt_0_2}
\begin{aligned}
&\ddt \left(\Vert u\Vert_{L^2}^2+\Vert B\Vert_{L^2}^2+\Vert E\Vert_{L^2}^2\right)+2\mu\Vert \nabla u\Vert_{L^2}^2+2\Vert E\Vert_{L^2}^2\\
\leq&\,-4\int_{\R^N } (u\times B)\cdot E\dx.
\end{aligned}
\end{eqnarray}
Applying H\"older's inequality together with Young's inequality, we obtain for any $\epsilon_0>0$,
\begin{eqnarray}\label{dE_dt_0_3}
\begin{aligned}
&\ddt \left(\Vert u\Vert_{L^2}^2+\Vert B\Vert_{L^2}^2+\Vert E\Vert_{L^2}^2\right)+2\mu\Vert \nabla u\Vert_{L^2}^2+2(1-\epsilon_0)\Vert E\Vert_{L^2}^2\\
 \leq&\, c(\epsilon_0)\Vert u\times B\Vert _{L^2}^2 .
\end{aligned}
\end{eqnarray}
Now, applying H\"older's inequality together with the 
Ladyzhenskaya's inequality:
\begin{equation}\label{Lady_Inequality}
\begin{aligned}
    \| f\|_{L^4(\R^N }
    \leq  \, C \|\nabla f\|_{L^2(\R^N )}^{N/4}\|f\|_{L^2(\R^N )}^{1-N/4}, \qquad 1\leq N\leq 4,
        \end{aligned}
	\end{equation}
    we obtain 
\begin{equation}\label{Ineq_u_B_0}
\begin{aligned}
\Vert u\times B(t)\Vert _{L^2}^2\lesssim&\, \|u\|_{L^4}^2\|B\|_{L^4}^2\\
\lesssim &\,\|\nabla u\|_{L^2}^{N/2}\|u\|_{L^2}^{2(1-N/4)}\|\nabla B\|_{L^2}^{N/2}\|B\|_{L^2}^{2(1-N/4)}
\end{aligned}
\end{equation}
Using Young's inequality in the product involving $u$ and $B$, we obtain 
\begin{equation}\label{Estim_Prod}
    \begin{aligned}
        \Vert u\times B(t)\Vert _{L^2}^2\lesssim&\,(\|u\|_{L^2}^2+\|B\|_{L^2}^2)^{2-N/2}(\|\nabla u\|_{L^2}^2+\|\nabla B\|_{L^2}^2)^{N/2}. 
    \end{aligned}
\end{equation}
On the other hand, using the definition of $\mathcal{E}_s(t)$ and since $s\geq 1$, we have 
\begin{equation}\label{H_1_Est}
\|u\|_{H^1}^2+\|B\|_{H^1}^2\lesssim \mathcal{E}_s(t)^2. 
\end{equation}
Consequently, since $N=2,\, 3$, we obtain for \eqref{Estim_Prod} and \eqref{H_1_Est}
\begin{equation}\label{Ineq_u_B_0}
\begin{aligned}
    \Vert u\times B(t)\Vert _{L^2}^2\lesssim&\,\mathcal{E}_s(t)^{4-N}\mathcal{E}_s(t)^{N-2}(\|\nabla u\|_{L^2}^2+\|\nabla B\|_{L^2}^2)\\
    \lesssim&\,\mathcal{E}_s(t)^2(\|\nabla u\|_{L^2}^2+\|\nabla B\|_{L^2}^2).
    \end{aligned}
\end{equation}
Plugging \eqref{Ineq_u_B_0} into \eqref{dE_dt_0_3}, choosing $\epsilon_0<1$,  then multiplying the result  by $(1+t)^\alpha$, integrating with respect to  $t$ over $(0,t)$ and using \eqref{M_s}, we obtain 
\begin{equation}
    \begin{aligned}
&\st \left( 1+\tau\right) ^{\alpha }\left\Vert U\left( \tau\right) \right\Vert
_{L^{2}}^{2}+2\int_{0}^{t}\left( 1+\tau\right) ^{\alpha}\left(\Vert \nabla u(\tau)\Vert_{L^2}^2+\Vert E(\tau)\Vert_{L^2}^2\right)\dtau  \notag \\
\leq &\left\Vert U_{0}\right\Vert _{L^{2}}^{2}+\alpha \int_{0}^{t}\left(
1+\tau\right) ^{\alpha -1}\left\Vert U\left( \tau\right) \right\Vert _{L^{2}}^{2}\dtau\notag\\
&+C\mathcal{E}_s(t)^2\int_0^t (1+\tau)^\alpha(\|\nabla u(\tau)\|_{L^2}^2+\|\nabla B(\tau)\|_{L^2}^2)\dtau.
\label{First_estimate_U}
\end{aligned}
\end{equation}
This finishes the proof of \eqref{Estimate_U0_lemma}.  

\textbf{Proof of \eqref{Estimate_Uk_lemma}.}
We now derive  the energy estimates for higher-order spatial  derivatives. Let $1\leq k\leq s$. Applying $\nabla^k$ to (\ref{MHD_System}) and introduce the notation  
\begin{equation}
    \tilde{g}=\nabla^{k} g,\qquad \text{for}\quad g\in\{u,B,E,p,j,U\},
\end{equation} 
we obtain 
\begin{eqnarray}\label{System_k}
\left\{
\begin{array}{ll}
\partial_t \tilde{u}-\mu\Delta \tilde{u}+\nabla \tilde{p}+(u\cdot \nabla)\tilde{u}=\nabla^{k}(j\times B)+f,\\[4pt]
\partial_t \tilde{E}-\nabla\times \tilde{B}=-\tilde{j},\\[4pt]
\partial_t \tilde{B}+\nabla\times \tilde{E}=0,\\[4pt]
\nabla\cdot \tilde{u}=\nabla\cdot \tilde{B}=0,\\[4pt]
\tilde{E}+\nabla^{k}(u\times B)=\tilde{j},
\end{array}
\right. 
\end{eqnarray}
where 
$f=-[\nabla^{k}, (u\cdot \nabla)]u=(u\cdot\nabla)\nabla^k u-\nabla^k((u\cdot\nabla)u)$.   

Now, multiplying the first equation in system \eqref{System_k} by $\tilde{u}$, the second equation by $\tilde{E}$ and the third equation by $\tilde{B}$, integrating over $\R^N $ and adding the resulting equations, we obtain as before 
  \begin{equation}
  \begin{aligned}
       \label{Energy_Indentity_k}
&\frac12\ddt \mathcal{E}^k(t)+\mu\Vert \nabla \tilde{u}\Vert_{L^2}^2+\Vert \tilde{E}\Vert_{L^2}^2\\
=&\int_{\R^N } \nabla^{k}(j\times B)\cdot\tilde{u}\dx-\int_{\R^N }\nabla^{k}(u\times B)\cdot\tilde{E}\dx+\int_{\R^N } f\cdot \tilde{u}\dx, 
\end{aligned}
\end{equation}
where 
\begin{equation}\label{Energy_Equation_k_tilde}
\mathcal{E}^k(t):=\Vert \tilde{u}\Vert_{L^2}^2+\Vert \tilde{E}\Vert_{L^2}^2 +\Vert \tilde{B}\Vert_{L^2}^2=\Vert \nabla^k U\Vert_{L^2}^2. 
\end{equation}
Using the last equation in the system \eqref{MHD_System}, we have
\begin{equation}
\int_{\R^N } \nabla^{k}(j\times B)\cdot\tilde{u}\dx=\int_{\R^N } \nabla^{k}(E\times B)\cdot\tilde{u}\dx+\int_{\R^N } \nabla^{k}((u\times B)\times B)\cdot\tilde{u}\dx.
\end{equation}
Now, thanks to H\"older's inequality, we find 
\begin{equation}\label{Est_First_term}
\left\vert \int_{\R^N }\nabla^{k}(E\times B)\cdot\tilde{u}\dx\right\vert\leq \Vert \nabla^{k}(E\times B) \Vert_{L^2}\Vert \tilde{u}\Vert_{L^2}. 
\end{equation}
Applying \eqref{First_inequaliy_Guass} for $p=2,\, r=2$ and $q=\infty$ and recalling \eqref{N_0}, we obtain the following estimate  
\begin{equation}
    \label{B_estimate_1}
    \begin{aligned}
\Vert \nabla^{k}(E\times B) \Vert_{L^2}\lesssim \,&\Vert E\Vert _{L^\infty}\Vert
\nabla^{k}B\Vert _{L^2}+\Vert B\Vert _{L^\infty}\Vert \nabla^{k}E\Vert
_{L^2}\notag\\
\lesssim\, &\mathcal N_0(t)(1+t)^{-N/2} \Vert \tilde{U}\Vert_{L^2}. 
\end{aligned}
\end{equation}
Hence, plugging this last estimate into \eqref{Est_First_term}, we obtain 
\begin{equation}\label{Est_First_Term}
\left\vert \int_{\R^N }\nabla^{k}(E\times B)\cdot\tilde{u}\dx\right\vert\leq C\mathcal N_0(t)(1+t)^{-N/2} \Vert \tilde{U}\Vert_{L^2}^2.
\end{equation}
By the same method, we have the estimate 
\begin{equation}  
\label{Mid_Term}
\left\vert\int_{\R^N }\nabla^{k}(u\times B)\cdot\tilde{E}\dx\right\vert\leq C\mathcal N_0(t)(1+t)^{-N/2} \Vert \tilde{U}\Vert_{L^2}^2.
\end{equation}
Moreover, 
\begin{equation}
    \begin{aligned}
\left\vert\int_{\R^N } \nabla^{k}((u\times B)\times B)\cdot\tilde{u}\dx\right\vert\leq\,& \Vert
\nabla^{k}((u\times B)\times B)\Vert _{L^2} \Vert
\tilde{u}\Vert _{L^2}\\
\lesssim&\, \Big( \Vert B\Vert _{L^\infty}\Vert
\nabla^{k}(u\times B)\Vert _{L^2}+\Vert u\times B\Vert _{L^\infty}\Vert \nabla^{k}B\Vert
_{L^2}\Big)\Vert
\tilde{U}\Vert _{L^2}. 
\end{aligned}
\end{equation}
Applying \eqref{B_estimate_1}, we obtain 
\begin{equation}\label{Estimate_Second_Term_1}
\left\vert\int_{\R^N } \nabla^{k}((u\times B)\times B)\cdot\tilde{u}\dx\right\vert\lesssim   \mathcal N_0^2(t)(1+t)^{-N} \Vert
\tilde{U}\Vert _{L^2}^2.
\end{equation}
Hence, collecting  \eqref{Est_First_Term}, \eqref{Mid_Term} and \eqref{Estimate_Second_Term_1},  we obtain  that 
\begin{equation}
    \label{Estimate_First_Term}
\left\vert \int_{\R^N } \nabla^{k}(j\times B)\cdot\tilde{u}\dx\right\vert
\lesssim \mathcal N_0(t)(1+t)^{-N/2} \Vert \tilde{U}\Vert_{L^2}^2 + \mathcal N_0^2(t)(1+t)^{-N} \Vert
\tilde{U}\Vert _{L^2}^2.
\end{equation}
We next estimate the commutator term appearing on the left-hand side of \eqref{Energy_Indentity_k}. 
Recalling $f=-[\nabla^{k}, (u\cdot \nabla)]u$ and $\tilde{u}=\nabla^k u$, we have 
\begin{equation}\label{Equa_tilde_u}
\begin{aligned}
\int_{\R^N } f \tilde{u}\dx=&\,-\int_{\R^N }[\nabla^{k}, (u\cdot \nabla)]u\cdot\tilde{u}\dx\\
=&\,\int_{\R^N }\left(\sum_{i=1}^N[\nabla^{k}, (u_i\partial_i)]\right)u\cdot\tilde{u}\dx\notag\\
=&\,\int_{\R^N }\left(\sum_{i=1}^N\nabla^{k}(u^i\partial_i u)-u^i\partial_i\nabla^{k}u\right)\cdot\tilde{u}\dx\notag\\
=&\,\int_{\R^N }\left(\sum_{i=1}^N\nabla^{k}(u^i\partial_i u)-u^i\nabla^{k}\partial_iu\right)\cdot\tilde{u}\dx. 
\end{aligned}
\end{equation}
Writing the term $\nabla^{k}(u^i\partial_i u)-u^i\nabla^{k}\partial_iu=[\nabla^{k},u^i]\partial_iu$, and applying the commutator estimate \eqref{Second_inequality_Gauss} and recalling \eqref{N_1}, we obtain
\begin{equation}\label{Estimate_Second_Term}
\begin{aligned}
\left\vert\int_{\R^N } f \tilde{u}dx\right\vert\lesssim\,& \,\Vert  \nabla  U\Vert_{L^\infty}
\Vert \tilde{U}\Vert_{L^2}^2\\
\lesssim\,&\,\mathcal N_1(t)(1+t)^{-N/2-1/2}\Vert \tilde{U}\Vert_{L^2}^2.
\end{aligned}
\end{equation}
Now plugging the estimates \eqref{Mid_Term}, \eqref{Estimate_First_Term}  and \eqref{Estimate_Second_Term} into \eqref{Energy_Indentity_k}, we obtain for $k\geq 1$
\begin{equation}\label{Energy_Indentity_k_2}
\begin{aligned}
&\frac12\ddt \mathcal{E}^k(t)+\mu\Vert \nabla \tilde{u}\Vert_{L^2}^2+\Vert \tilde{E}\Vert_{L^2}^2\\
\lesssim &\,\mathcal N_0(t)(1+t)^{-N/2} \Vert \tilde{U}\Vert_{L^2}^2+\mathcal N_0^2(t)(1+t)^{-N} \Vert
\tilde{U}\Vert _{L^2}^2\\
&+ N_1(t)(1+t)^{-N/2-1/2}\Vert \tilde{U}\Vert_{L^2}^2.
\end{aligned}
\end{equation}
To derive  time-weighted higher order energy estimate, we multiply  \eqref{Energy_Indentity_k_2} by $(1+t)^\alpha$ , then using  \eqref{Energy_Equation_k_tilde} and  integrating the resulting inequality  from $0$ to $t$, we  obtain 
\begin{equation}
\begin{aligned}
\label{Estimate_U_k}
&\st\left( 1+\tau\right) ^{\alpha }\Vert \nabla^{k} U\left( \tau\right) \Vert
_{L^{2}}^{2}+2\int_{0}^{t}\left( 1+\tau\right) ^{\alpha}\left(\mu\Vert \nabla^{k+1} u(\tau)\Vert_{L^2}^2+\Vert \nabla^{k} E(\tau)\Vert_{L^2}^2\right)\dtau   \\
\lesssim \,&\Vert \nabla^{k}U_{0}\Vert _{L^{2}}^{2}+\alpha \int_{0}^{t}\left(
1+\tau\right) ^{\alpha -1} \Vert \nabla^{k}U\left( \tau\right) \Vert _{L^{2}}^{2}\dtau
+\mathcal N_0(t)\int_{0}^{t}(1+\tau)^{-N/2+\alpha} \Vert \nabla^{k}{U}(\tau)\Vert_{L^2}^2\dtau\\
&+\mathcal N_0^2(t)\int_{0}^{t}(1+\tau)^{-N+\alpha} \Vert \nabla^{k}{U}(\tau)\Vert_{L^2}^2\dtau
+\mathcal N_1(t)\int_{0}^{t}(1+\tau)^{-N/2-1/2+\alpha} \Vert \nabla^{k}{U}(\tau)\Vert_{L^2}^2\dtau.
\end{aligned}
\end{equation}
This finishes the proof of \eqref{Estimate_Uk_lemma}

\textbf{Proof of \eqref{Estimate_time_integral_B_lemma}} Now, taking the curl of the second equation in  system \eqref{System_k}, we obtain
\begin{equation}\label{eq_curl_1_k}
\partial_t (\nabla\times\tilde{E})-\nabla\times(\nabla\times \tilde{B})=-\nabla\times\tilde{j}.
\end{equation}
 Since $\nabla\cdot \tilde{B}=0$, then, we have the identity 
 \begin{equation}
\nabla\times \nabla \times \tilde{B} =-\Delta \tilde{B}. 
\end{equation}
Therefore equation \eqref{eq_curl_1_k} becomes 
 \begin{equation}
\partial_t (\nabla\times\tilde{E})+\Delta \tilde{B}=-\nabla\times\tilde{j}.
\end{equation}
Now, multiplying the above equation by $-\tilde{B}$ and integrating over $\R^N ,$ we get 
\begin{equation}\label{ddt_E}
-\int_{\R^N } \partial_t (\nabla\times\tilde{E})\cdot \tilde{B}\dx +\Vert \nabla \tilde{B}\Vert_{L^2}=\int_{\R^N }(\nabla\times\tilde{j}) \cdot\tilde{B}\; \dx. 
\end{equation}
Using the third equation in  \eqref{System_k}, we obtain   
\begin{equation}\label{ddt_B}
-\int_{\R^N }(\nabla\times\tilde{E})\cdot \partial_t \tilde{B}\dx=\int_{\R^N } |\nabla\times\tilde{E}|^2\dx.
\end{equation}
Collecting \eqref{ddt_E} and \eqref{ddt_B}, we find  
\begin{equation}\label{Eq_2_1_k}
\ddt F_1^k(t)
+\Vert \nabla \tilde{B}\Vert_{L^2}=-\int_{\R^N }(\nabla\times\tilde{j})\cdot \tilde{B} \dx+\int_{\R^N }|\nabla\times\tilde{E}|^2\dx, 
\end{equation} 
where 
\begin{equation}
F_1^k(t)=-\int_{\R^N } (\nabla\times \nabla^{k} E)\cdot \nabla^{k} B\dx .
\end{equation}
Now using the identity 
\begin{equation}
(\nabla \times\tilde{j})\cdot \tilde {B}=\nabla\cdot (\tilde{j}\times \tilde {B})+\tilde{j}\cdot(\nabla \times\tilde {B})
\end{equation}
then the first term on the right-hand side  of \eqref{Eq_2_1_k} can be estimated as
\begin{equation}
    \begin{aligned} 
\left\vert\int_{\R^N }(\nabla\times\tilde{j})\cdot \tilde{B}\dx \right \vert\,&= \int_{\R^N }|\tilde{j}\cdot(\nabla \times\tilde {B})|\dx
\leq \,
 \Vert \nabla^{k} j\Vert_{L^2}\Vert \nabla^{k+1} B\Vert_{L^2.
 }.
\end{aligned}
\end{equation}
Hence, applying Young's inequality, we get for any $\varepsilon>0$, 
 \begin{eqnarray*}
\left\vert\int_{\R^N }(\nabla\times\tilde{j}) \tilde{B} dx \right \vert\leq  c(\varepsilon)\Vert \nabla^{k} j\Vert_{L^2}^2+\varepsilon \Vert \nabla^{k+1} B\Vert_{L^2}^2. 
\end{eqnarray*}
We also have the estimate 
\begin{equation}
\int_{\R^N }|\nabla\times\tilde{E}|^2dx\leq  \Vert \nabla^{k+1}E \Vert _{L^2}^2.
\end{equation}
On the other hand, we have by the last equation in \eqref{System_k}, 
\begin{equation}
\nabla^{k}E+\nabla^{k}(u\times B)=\nabla^{k}j.
\end{equation}
This implies that for $k\geq 0$,

\begin{equation}
 \Vert \nabla^{k}j \Vert{ _{L^2}^2 }\leq  2(\Vert \nabla^{k}E \Vert{ _{L^2}^2 }+\Vert \nabla^{k}(u\times B) \Vert{ _{L^2}^2 }).
\end{equation}
Now, similar to the estimate \eqref{B_estimate_1}, we have for $1\leq k\leq s-1$
\begin{equation}
\Vert \nabla^{k}(u\times B) \Vert{ _{L^2}^2 }\lesssim \mathcal N_0^2(t)(1+t)^{-N} \Vert \nabla^{k} U\Vert_{L^2}^2,\quad k\geq 0, 
\end{equation}
and for $k=0$,  we have the estimate \eqref{Ineq_u_B_0}. 
Hence, for $k\geq 1$, we have 
\begin{equation}
\Vert \nabla^{k}j \Vert{ _{L^2}^2 }\leq  2\Vert \nabla^{k}E \Vert{ _{L^2}^2 }+C\mathcal N_0^2(t)(1+t)^{-N} \Vert \nabla^{k} U\Vert_{L^2}^2
\end{equation}
and for $k=0$, we have 
\begin{equation}
\Vert j \Vert{ _{L^2}^2 }\leq  2\Vert E \Vert{ _{L^2}^2 }+(\|u\|_{L^2}^2+\|B\|_{L^2}^2)(\|\nabla u\|_{L^2}^2+\|\nabla B\|_{L^2}^2).
\end{equation}
Plugging the above estimates into \eqref{Eq_2_1_k}, we get  for
 $k\geq 0$, 
\begin{equation}\label{Eq_2_2_k}
\begin{aligned}
&\ddt F^k(t)
+(1-\varepsilon)\Vert \nabla^{k+1} B\Vert_{L^2}^2\\
\leq\,& c(\varepsilon)(\Vert \nabla^{k} E\Vert_{L^2}^2+\Vert \nabla^{k+1}E \Vert _{L^2}^2) 
+C\mathbf{1}_{k\geq 1}\mathcal N_0^2(t)(1+t)^{-N} \Vert \nabla^{k} U\Vert_{L^2}^2\\
&+(\|u\|_{L^2}^2+\|B\|_{L^2}^2)(\|\nabla u\|_{L^2}^2+\|\nabla B\|_{L^2}^2).
\end{aligned}
\end{equation}
The estimate \eqref{Eq_2_2_k} shows that although the equation of $B$ itself does not provide any direct damping, the interaction between $E$ and $B$ generate through the cross term $F^k(t)$ the coercive quantity $\|\nabla^{k+1}B\|_{L^2}^2$ which in turn explain the compensated structure of the Maxwell system. 
 
Now, it is not hard to see that  
\begin{equation}\label{Equv_L_U}
| F^k(t)|\leq c_2 \Vert
\nabla^{k} U\Vert _{H^1}^2. 
\end{equation}
Consequently, we fix $\varepsilon<1$  and  multiply \eqref{Eq_2_2_k} by $(1+t)^{\alpha}$, integrating from $0$ to $t$ and taking into account \eqref{Equv_L_U}, we get for $0\leq k\leq s-1$, 
\begin{equation}
    \begin{aligned}
\label{Estimate_main_L_2}
&\int_0^t(1+\tau)^{\alpha}\Vert \nabla^{k+1} B\Vert_{L^2}^2 \textup{d} \tau\\
\lesssim&\,\Vert \nabla^{k} U_0\Vert_{H^1}^2
+\st(1+\tau)^{\alpha}\Vert \nabla^{k} U\Vert_{H^1}^2
+ \int_{0}^{t}(1+\tau)^{\alpha} (\Vert \nabla^{k} E\Vert_{L^2}^2+\Vert \nabla^{k+1}E \Vert _{L^2}^2) \dtau\\
&+\mathbf{1}_{k\geq 1}\mathcal N_0^2(t)\int_{0}^{t}(1+\tau)^{-N+{\alpha}}\Vert
\nabla^{k} U(\tau)\Vert _{L^2}^2\dtau+\alpha\int_0^t(1+\tau)^{\alpha-1}\|\nabla^{k+1} B\|^2_{L^2}\;\dtau\\
&+\int_0^t(1+\tau)^\alpha (\|u(\tau)\|_{L^2}^2+\|B(\tau)\|_{L^2}^2)(\|\nabla u(\tau)\|_{L^2}^2+\|\nabla B(\tau)\|_{L^2}^2). 
 \end{aligned}
\end{equation}
This yields \eqref{Estimate_time_integral_B_lemma} and ends the proof of Proposition \ref{Estimate_E_D_lemma}. 
\end{proof}

\subsubsection{Proof of Proposition \ref{Proposition_H_s_Estimate}}
We now complete the proof of \eqref{Estimate_induction} using an induction argument in $\ell$.
We begin with the base case $ \ell = 0 $. 
Setting  $ \alpha=0 $ in Proposition \ref{Estimate_E_D_lemma}, summing \eqref{Estimate_Uk_lemma} over  $ 1\leq k\leq s $, combining the resulting estimate  with  \eqref{Estimate_U0_lemma} and  using the estimate 
\begin{equation}
\mathcal{E}_s(t)^2\int_0^t (\|\nabla u(\tau)\|_{L^2}^2+\|\nabla B(\tau)\|_{L^2}^2)\dtau\lesssim  \mathcal{E}_s(t)^2\mathcal{D}_s(t)^2,
\end{equation}
 we  obtain
\begin{equation}\label{induction_l=0_u_E}
\begin{aligned}
	&\st \left\Vert U\left( \tau\right) \right\Vert_{H^s}^{2}+2\int_{0}^{t}\left(\mu\Vert \nabla  u(\tau)\Vert_{H^s}^2+\Vert E(\tau)\Vert_{H^s}^2\right)\dtau\\
\lesssim\,& \left\Vert U_{0}\right\Vert _{H^s}^{2}+\mathcal{E}_s(t)^2\mathcal{D}_s(t)^2+\mathcal{N}_0(t)\int_{0}^{t}(1+\tau)^{-N/2} \Vert \nabla {U}(\tau)\Vert_{H^{s-1}}^2\dtau
\\
&+\mathcal{N}_0^2(t)\int_{0}^{t}(1+\tau)^{-N} \Vert \nabla {U}(\tau)\Vert_{H^{s-1}}^2\dtau\\
&+\mathcal{N}_1(t)\int_{0}^{t}(1+\tau)^{-N/2-1/2} \Vert \nabla{U}(\tau)\Vert_{H^{s-1}}^2\dtau.
\end{aligned}
\end{equation}

Summing \eqref{Estimate_time_integral_B_lemma} for $ 0\le k \le s-1 $
, one has
\begin{equation}\label{induction_l=0_B}
\begin{aligned}
	\int_0^t
	\Vert \nabla B\Vert_{H^{s-1}}^2\dtau
	\lesssim\,&\Vert U_0\Vert_{H^s}^2
		+\st\Vert U(\tau)\Vert_{H^s}^2+C\mathcal{E}_s(t)^2\mathcal{D}_s(t)^2\\
	&+  \int_{0}^{t}
	\|E\|_{H^s} \dtau+\mathcal N_0^2(t)\int_{0}^{t}(1+\tau)^{-N}\Vert
		\nabla  U(\tau)\Vert _{H^{s-1}}^2\dtau\\
	\lesssim\,&\Vert U_0\Vert_{H^s}^2
	+\st\Vert U\Vert_{H^s}^2+\mathcal{E}_s(t)^2\mathcal{D}_s(t)^2\\
 &+ \int_{0}^{t} \|E\|_{H^s} \dtau+\mathcal N_0^2(t)\int_{0}^{t}(1+\tau)^{-N}\Vert
	\nabla U(\tau)\Vert _{H^{s}}^2\dtau.
 \end{aligned}
\end{equation}
Now, adding \eqref{induction_l=0_B} + $ \lambda $ \eqref{induction_l=0_u_E}, where  $ \lambda>0 $ is chosen sufficiently small, yields 
\begin{equation}\label{induction_l=0}
\begin{aligned}
	&\st \left\Vert U\left( \tau\right) \right\Vert_{H^s}^{2}+\int_{0}^{t}\left(\Vert \nabla  u(\tau)\Vert_{H^s}^2+\Vert E(\tau)\Vert_{H^s}^2+\|\nabla B(\tau\|_{H^{s-1}}\right)\dtau\\
	\lesssim\,&\left\Vert U_{0}\right\Vert _{H^s}^{2}+\mathcal{E}_s(t)^2\mathcal{D}_s(t)^2+\mathcal N_0(t)\int_{0}^{t}(1+\tau)^{-N/2} \Vert \nabla {U}(\tau)\Vert_{H^{s-1}}^2\dtau\\
	&+\mathcal N_0^2(t)\int_{0}^{t}(1+\tau)^{-N} \Vert \nabla {U}(\tau)\Vert_{H^{s-1}}^2\dtau\\
	&+\mathcal N_1(t)\int_{0}^{t}(1+\tau)^{-N/2-1/2} \Vert\nabla U(\tau)\Vert_{H^{s-1}}^2\dtau\\
	\lesssim &\, \left\Vert U_{0}\right\Vert _{H^s}^{2}+\mathcal{E}_s(t)^2\mathcal{D}_s(t)^2+\rm{R}_1^{0}+ \rm{R}_2^{0}+\rm{R}_3^{0},
 \end{aligned}
\end{equation}
where $\rm{R}_1^{0},\, \rm{R}_2^{0}$ and $\rm{R}_3^{0}$
stand for the three time-integral terms on the right-hand side of the above estimate in the order in which they appear. 

\noindent
We have 
\begin{equation}
\begin{aligned}
\rm{R}_1^{0}+\rm{R}_2^{0}+\rm{R}_3^{0}\lesssim\,& C (\mathcal{N}_0(t)+\mathcal{N}_1(t)+\mathcal{N}_0^2(t))\int_{0}^{t} \Vert \nabla{U}(\tau)\Vert_{H^{s-1}}^2\dtau\\
\lesssim\,& (\mathcal{N}_0(t)+\mathcal{N}_1(t)+\mathcal{N}_0^2(t))\mathcal D_s^{2}(t). 
\end{aligned}
\end{equation}
Hence, plugging the above estimate into \eqref{induction_l=0}, we obtain 
\begin{equation}
\begin{aligned}
&\st \left\Vert U\left( \tau\right) \right\Vert_{H^s}^{2}+\int_{0}^{t}\left(\Vert \nabla  u(\tau)\Vert_{H^s}^2+\Vert E(\tau)\Vert_{H^s}^2+\| \nabla B(\tau\|_{H^{s-1}}^2\right)\dtau\\
	\lesssim\,&\left\Vert U_{0}\right\Vert _{H^s}^{2}+\mathcal{E}_s(t)^2\mathcal{D}_s(t)^2+\Big(\mathcal N_0(t)+\mathcal N_0^2(t)+\mathcal N_1(t)\Big)\mathcal D_s^2(t).   
 \end{aligned}
\end{equation}
This shows that \eqref{Estimate_induction} holds for $\ell=0$. 

Next, assume that \eqref{Estimate_induction} is true for $ \ell  $ and we prove that  \eqref{Estimate_induction} still holds for $ \ell+1 $. Choosing $ \alpha=\ell+1 $ in Proposition \ref{Estimate_E_D_lemma}, summing \eqref{Estimate_Uk_lemma} for $ \ell+1\le k \le s $ we have
\begin{equation}
    \label{induction_l=1_u_E}
    \begin{aligned}
        &\st\left( 1+\tau\right) ^{\ell+1 }\Vert \nabla^{\ell+1} U\left( \tau\right) \Vert
		_{H^{s-(\ell+1)}}^{2}\\
  &+2\int_{0}^{t}\left( 1+\tau\right) ^{\ell+1}\left(\mu\Vert \nabla^{{\ell+2}} u(\tau)\Vert_{H^{s-(\ell+1)}}^2+\Vert \nabla^{\ell+1} E(\tau)\Vert_{H^{s-(\ell+1)}}^2\right)\dtau   \\
	\lesssim\, &\left\Vert U_{0}\right\Vert _{H^s}^{2}+(\ell+1) \int_{0}^{t}\left(
	1+\tau\right) ^{\ell} \Vert \nabla^{\ell+1}U\left( \tau\right) \Vert _{H^{s-(\ell+1)}}^{2}\dtau\\
	&+\mathcal N_0(t)\int_{0}^{t}(1+\tau)^{-N/2+\ell+1} \Vert \nabla^{\ell+1}{U}(\tau)\Vert_{H^{s-(\ell+1)}}^2\dtau\\
	&+\mathcal N_0^2(t)\int_{0}^{t}(1+\tau)^{-N+\ell+1} \Vert \nabla^{\ell+1}{U}(\tau)\Vert_{H^{s-(\ell+1)}}^2\dtau\\
	&+\mathcal N_1(t)\int_{0}^{t}(1+\tau)^{-N/2-1/2+\ell+1} \Vert \nabla^{\ell+1}{U}(\tau)\Vert_{H^{s-(\ell+1)}}^2\dtau,
  \end{aligned}
\end{equation}
where we have used the estimate
\begin{equation}
\Vert \nabla^{\ell+1}U_{0}\Vert _{H^{s-\ell-1}}^2\lesssim  \left\Vert U_{0}\right\Vert _{H^{s}}^2. 
\end{equation}
Our goal now is to estimate the second term on the right-hand side of \eqref{induction_l=1_u_E} using induction on $\ell$. 
First, we have since (by the induction assumption) \eqref{Estimate_D_E_2} is true for $\ell$, 
\begin{equation}\label{B_Estimate_Induction}
\begin{aligned}
    \int_{0}^{t}\left(
	1+\tau\right) ^{\ell} \Vert \nabla^{\ell+1}B\left( \tau\right) \Vert _{H^{s-(\ell+1)}}^{2}\dtau \lesssim&\,  \|U_0\|_{H^s}^2+\mathcal{E}_s(t)^2\mathcal{D}_s(t)^2\\
	&+\Big(\mathcal N_0(t)+\mathcal N_0^2(t)+\mathcal N_1(t)\Big)\mathcal D_s^2(t).
	\end{aligned}
\end{equation}
Next, using the estimate 
\begin{equation}\label{Sobolev_Estimate}
    \|\nabla^{k_1} f\|_{H^{s}}\lesssim \|\nabla^{k_2} f\|_{H^{s+k_1-k_2}},\qquad k_1\geq k_2
\end{equation}
together with the induction assumption \eqref{Estimate_D_E} (which we assume to hold for $\ell$), we have 
\begin{equation}\label{E_Estimate_Induction}
\begin{aligned}
    &\int_{0}^{t}\left(
	1+\tau\right) ^{\ell} \Vert \nabla^{\ell+1}E\left( \tau\right) \Vert _{H^{s-(\ell+1)}}^{2}\dtau\\\lesssim &\,\int_{0}^{t}\left(
	1+\tau\right) ^{\ell} \Vert \nabla^{\ell}E\left( \tau\right) \Vert _{H^{s-\ell}}^{2}\dtau\\
 \lesssim&\, \|U_0\|_{H^s}^2+\mathcal{E}_s(t)^2\mathcal{D}_s(t)^2
 +\Big(\mathcal N_0(t)+\mathcal N_0^2(t)+\mathcal N_1(t)\Big)\mathcal D_s^2(t).
 \end{aligned}
\end{equation}
Also using the induction assumption on \eqref{Estimate_D_E}, together with the embedding $H^{s-(\ell+1)}\hookrightarrow H^{s-\ell}$, we have 
\begin{equation}\label{u_Estimate_Induction}
\begin{aligned}
    &\int_{0}^{t}\left(
	1+\tau\right) ^{\ell} \Vert \nabla^{\ell+1}u\left( \tau\right)\Vert _{H^{s-(\ell+1)}}^{2}\dtau\\
   \lesssim &\,   \int_{0}^{t}\left(
	1+\tau\right) ^{\ell}\Vert \nabla^{\ell+1}u\left( \tau\right) \Vert _{H^{s-\ell}}^{2}\dtau\\
 &\lesssim \, \|U_0\|_{H^s}^2+\mathcal{E}_s(t)^2\mathcal{D}_s(t)^2
 +\Big(\mathcal N_0(t)+\mathcal N_0^2(t)+\mathcal N_1(t)\Big)\mathcal D_s^2(t).
 \end{aligned}
\end{equation}
Hence, collecting \eqref{B_Estimate_Induction}, \eqref{E_Estimate_Induction} and \eqref{u_Estimate_Induction}, we arrive at 
\begin{equation}
\begin{aligned}
    \int_{0}^{t}\left(
	1+\tau\right) ^{\ell} \Vert \nabla^{\ell+1}U\left( \tau\right) \Vert _{H^{s-(\ell+1)}}^{2}\dtau\lesssim&\,  \|U_0\|_{H^s}^2++\mathcal{E}_s(t)^2\mathcal{D}_s(t)^2\\
	&+\Big(\mathcal N_0(t)+\mathcal N_0^2(t)+\mathcal N_1(t)\Big)\mathcal D_s^2(t).
	\end{aligned}
\end{equation}
Consequently, we deduce from \eqref{induction_l=1_u_E}
\begin{equation}
    \label{induction_l=1_u_E_2}
    \begin{aligned}
        &\st\left( 1+\tau\right) ^{\ell+1 }\big\Vert \nabla^{\ell+1} U\left( \tau\right) \big\Vert
		_{H^{s-(\ell+1)}}^{2}\\
  &\quad+2\int_{0}^{t}\left( 1+\tau\right) ^{\ell+1}\left(\mu\Vert \nabla^{{\ell+2}} u(\tau)\Vert_{H^{s-(\ell+1)}}^2+\Vert \nabla^{\ell+1} E(\tau)\Vert_{H^{s-(\ell+1)}}^2\right)\dtau   \\
	\lesssim\, &\|U_0\|_{H^s}^2+\mathcal{E}_s(t)^2\mathcal{D}_s(t)^2+\Big(\mathcal N_0(t)+\mathcal N_0^2(t)+\mathcal N_1(t)\Big)\mathcal D_s^2(t)\\
	&+\mathcal N_0(t)\int_{0}^{t}(1+\tau)^{-N/2+\ell+1} \Vert \nabla^{\ell+1}{U}(\tau)\Vert_{H^{s-(\ell+1)}}^2\dtau\\
	&+\mathcal N_0^2(t)\int_{0}^{t}(1+\tau)^{-N+\ell+1} \Vert \nabla^{\ell+1}{U}(\tau)\Vert_{H^{s-(\ell+1)}}^2\dtau\\
	&+\mathcal N_1(t)\int_{0}^{t}(1+\tau)^{-N/2-1/2+\ell+1} \Vert \nabla^{\ell+1}{U}(\tau)\Vert_{H^{s-(\ell+1)}}^2\dtau. 
  \end{aligned}
\end{equation}
By the same method,  we take $\alpha=\ell+1$ in \eqref{Estimate_time_integral_B_lemma} and adding over $k$, with $1\leq \ell\leq k\leq s-1$, 
we obtain 
\begin{equation}\label{Estimate_main_L_2_l}
\begin{aligned}
&\int_0^t(1+t)^{\ell+1}\Vert \nabla^{\ell+1} B\Vert_{H^{s-\ell-1}}^2\dtau\\
\lesssim\,&\Vert  U_0\Vert_{H^s}^2
+(1+t)^{\ell+1}\Vert \nabla^{\ell+1} U\Vert_{H^{s-\ell-1}}^2
+  \int_{0}^{t}(1+\tau)^{\ell+1} \Vert \nabla^{\ell+1}E \Vert _{H^{s-\ell-1}}^2 \dtau\\
&+\mathcal{N}_0^2(t)\int_{0}^{t}(1+\tau)^{-N+\ell+1}\Vert
\nabla^{\ell} U(\tau)\Vert _{H^{s-\ell-1}}^2\dtau
\\
&+(\ell+1)\int_0^t(1+\tau)^{\ell}\Vert \nabla^{\ell+1} B\Vert_{H^{s-\ell-1}}^2\; \dtau.
\end{aligned}
\end{equation}
Using \eqref{B_Estimate_Induction}, we arrive at 
\begin{equation}\label{Estimate_main_L_2_2}
\begin{aligned}
&\int_0^t(1+t)^{\ell+1}\Vert \nabla^{\ell+1} B\Vert_{H^{s-\ell-1}}^2\dtau\\
\lesssim\,&\Vert  U_0\Vert_{H^s}^2+\mathcal{E}_s(t)^2\mathcal{D}_s(t)^2
	+\Big(\mathcal N_0(t)+\mathcal N_0^2(t)+\mathcal N_1(t)\Big)\mathcal D_s^2(t)\\
&+(1+t)^{\ell+1}\Vert \nabla^{\ell+1} U\Vert_{H^{s-\ell-1}}^2
+  \int_{0}^{t}(1+\tau)^{\ell+1} \Vert \nabla^{\ell+1}E \Vert _{H^{s-\ell-1}}^2 \dtau\\
&+\mathcal{N}_0^2(t)\int_{0}^{t}(1+\tau)^{-N+\ell+1}\Vert
\nabla^{\ell} U(\tau)\Vert _{H^{s-\ell-1}}^2\dtau.
\end{aligned}
\end{equation}
As above, for some $\hat{\lambda}>0$ small enough, we have by taking \eqref
{induction_l=1_u_E}+$\hat{\lambda}$\eqref{Estimate_main_L_2_l}, 
\begin{equation}
    \begin{aligned}
&\st\left( 1+\tau\right) ^{\ell+1 }\Vert \nabla^{\ell+1} U\left( \tau\right) \Vert
		_{H^{s-(\ell+1)}}^{2}\\
&+\int_{0}^{t}\left( 1+\tau\right) ^{\ell+1 }\left(\Vert \nabla^{\ell+2} u(\tau)\Vert_{H^{s-\ell-1}}^2+\Vert \nabla^{\ell+1} E(\tau)\Vert_{H^{s-\ell-1}}^2+\Vert \nabla^{\ell+1} B\Vert_{H^{s-\ell-1}}^2\right)\dtau   \\
\lesssim &\, \|U_0\|_{H^s}^2+\mathcal{E}_s(t)^2\mathcal{D}_s(t)^2+\Big(\mathcal N_0(t)+\mathcal N_0^2(t)+\mathcal N_1(t)\Big)\mathcal D_s^2(t)
&\\
	&+\mathcal N_0(t)\int_{0}^{t}(1+\tau)^{-N/2+\ell+1} \Vert \nabla^{\ell+1}{U}(\tau)\Vert_{H^{s-(\ell+1)}}^2\dtau\\
	&+\mathcal N_0^2(t)\int_{0}^{t}(1+\tau)^{-N+\ell+1} \Vert \nabla^{\ell+1}{U}(\tau)\Vert_{H^{s-(\ell+1)}}^2\dtau\\
	&+\mathcal N_1(t)\int_{0}^{t}(1+\tau)^{-N/2-1/2+\ell+1} \Vert \nabla^{\ell+1}{U}(\tau)\Vert_{H^{s-(\ell+1)}}^2\dtau\\
	\lesssim&\,\|U_0\|_{H^s}^2+\mathcal{E}_s(t)^2\mathcal{D}_s(t)^2+\Big(\mathcal N_0(t)+\mathcal N_0^2(t)+\mathcal N_1(t)\Big)\mathcal D_s^2(t)+\rm{R}_1^{\ell+1}+ \rm{R}_2^{\ell+1}+\rm{R}_3^{\ell+1},
 \end{aligned}
\end{equation}
where the hidden constant depends on  $\hat{\lambda}$ and as in \eqref{induction_l=0}, the terms  $\rm{R}_1^{\ell+1}$, $\rm{R}_2^{\ell+1}$ and $\rm{R}_3^{\ell+1}$ denote, respectively, the three-integral terms appearing on the right-hand side of the preceding estimate.

\noindent
Using the estimate 
\begin{equation}
\begin{aligned}
   & \mathrm{R}_1^{\ell+1}+ \mathrm{R}_2^{\ell+1}+\mathrm{R}_3^{\ell+1}\\
    \lesssim&\, \left(\mathcal N_0(t)+\mathcal N_0^2(t)+\mathcal N_1(t)\right)\int_{0}^{t}(1+\tau)^{-N/2+\ell+1} \Vert \nabla^{\ell+1}{U}(\tau)\Vert_{H^{s-(\ell+1)}}^2\dtau,
    \end{aligned}
\end{equation}
we arrive at 
\begin{equation}\label{Estimate_U_k_l_3}
    \begin{aligned}
        &\st\left( 1+\tau\right) ^{\ell+1 }\Vert \nabla^{\ell+1} U\left( \tau\right) \Vert
		_{H^{s-(\ell+1)}}^{2}\\
&+\int_{0}^{t}\left( 1+\tau\right) ^{\ell+1 }\left(\Vert \nabla^{\ell+2} u(\tau)\Vert_{H^{s-\ell-1}}^2+\Vert \nabla^{\ell+1} E(\tau)\Vert_{H^{s-\ell-1}}^2\right)\dtau   \\
&+\int_{0}^{t}\left( 1+\tau\right) ^{\ell+1 }\Vert \nabla^{\ell+1} B\Vert_{H^{s-\ell-1}}^2\dtau \\
\lesssim &\, \|U_0\|_{H^s}^2+\mathcal{E}_s(t)^2\mathcal{D}_s(t)^2+\Big(\mathcal N_0(t)+\mathcal N_0^2(t)+\mathcal N_1(t)\Big)\mathcal D_s^2(t)\\
&+ \left(\mathcal N_0(t)+\mathcal N_0^2(t)+\mathcal N_1(t)\right)\int_{0}^{t}(1+\tau)^{-N/2+\ell+1} \Vert \nabla^{\ell+1}{U}(\tau)\Vert_{H^{s-(\ell+1)}}^2\dtau.
    \end{aligned}
\end{equation}
Since $N\geq 2$, we have 
\begin{equation}\label{N_1_D_s}
\begin{aligned}
&\int_{0}^{t}(1+\tau)^{-N/2+\ell+1} \Vert \nabla^{\ell+1}{U}(\tau)\Vert_{H^{s-(\ell+1)}}^2\dtau\\
\lesssim&\, \int_{0}^{t}(1+\tau)^{\ell} \Vert \nabla^{\ell+1}{U}(\tau)\Vert_{H^{s-(\ell+1)}}^2\dtau
\lesssim\, \mathcal D_s^{2}(t), 
\end {aligned}
\end{equation}
consequently, using \eqref{N_1_D_s}, we obtain from \eqref{Estimate_U_k_l_3}
and 
\eqref{Estimate_main_L_2_l}, 
\begin{equation}\label{Estimate_U_k_l_4}
    \begin{aligned}
&\st\left( 1+\tau\right) ^{\ell+1 }\Vert \nabla^{\ell+1} U\left( \tau\right) \Vert
		_{H^{s-(\ell+1)}}^{2}\\
&+\int_{0}^{t}\left( 1+\tau\right) ^{\ell+1 }\left(\Vert \nabla^{\ell+2} u(\tau)\Vert_{H^{s-\ell-1}}^2+\Vert \nabla^{\ell+1} E(\tau)\Vert_{H^{s-\ell-1}}^2+\Vert \nabla^{\ell+1} B\Vert_{H^{s-\ell-1}}^2\right)\dtau   \\
\lesssim &\, \|U_0\|_{H^s}^2+\mathcal{E}_s(t)^2\mathcal{D}_s(t)^2+\Big(\mathcal N_0(t)+\mathcal N_0^2(t)+\mathcal N_1(t)\Big)\mathcal D_s^2(t). 
\end{aligned}
\end{equation}
Thus, \eqref{Estimate_induction} holds for $\ell+1$. 
This finishes the proof of Proposition \ref{Proposition_H_s_Estimate}.

\subsection{Time-decay estimate--proof of \eqref{Weighted_estimate_mass}}\label{Subsection_Decay}

In this section, we prove the desired estimate \eqref{Weighted_estimate_mass} for the time-weighted functional $\mathcal{M}_s(t)$. This estimate will be combined with the energy estimates obtained above to prove
Theorem \ref{Theorem_Global_existence}. 
\begin{proposition}\label{proposition_Decay}
    It holds that 
    \begin{equation}\label{Weighted_estimate_mass_2}
	\mathcal M_s(t)\lesssim (\|U_0\|_{L^1}+\|U_0\|_{H^s})+\mathcal M_s(t)\mathcal N_0(t)+\mathcal M_s^2(t)+\mathcal M_s^2(t)\mathcal N_0(t).
\end{equation}
\end{proposition}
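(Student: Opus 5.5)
We argue through Duhamel's formula for $U_t+\mathcal L U=G$. Writing $e^{-t\mathcal L}$ for the linear semigroup of \eqref{MHD_System_Linearized}, we have
\[
U(t)=e^{-t\mathcal L}U_0+\int_0^t e^{-(t-\tau)\mathcal L}G(\tau)\,\dtau ,
\]
where $G=(G_1,G_2,0)$ with $G_1=-\mathbb P(E\times B+(u\times B)\times B+(u\cdot\nabla)u)$ and $G_2=-u\times B$.

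The linear part is controlled directly. By Lemma \ref{MHD_linear_stability} and Lemma \ref{Lemma_E}, its contribution to $\mathcal M_s(t)$ is at most $C(\|U_0\|_{L^1}+\|U_0\|_{H^s})$.

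For the Duhamel integral, fix $0\le k\le s$ and split the time integral into $[0,t/2]$ and $[t/2,t]$.

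\textbf{Interval $[0,t/2]$.} We apply the $L^1$–$L^2$ estimates to $G(\tau)$, obtaining the kernel factor $(1+t-\tau)^{-N/4-k/2}\|G(\tau)\|_{L^1}$ plus an exponentially decaying remainder involving $\|\nabla^k G\|_{L^2}$.

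\textbf{Interval $[t/2,t]$.} We move all $k$ derivatives onto $G$ and use the $L^2$–$L^2$ bound, i.e. Lemma \ref{MHD_linear_stability} with $L^2$ data. This gives the factor $\|\nabla^k G(\tau)\|_{L^2}$. For the $u$-component, the parabolic smoothing lets us move one derivative to the kernel, $(t-\tau)^{-1/2}$, which is useful at the top order $k=s$ for $(u\cdot\nabla)u$.

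The nonlinear terms are bounded as follows, using Hölder, Lemma \ref{Guass_symbol_lemma}, and $\mathcal N_0\lesssim\mathcal M_s$ (Lemma \ref{Lemma_N_i_M_s}):
\[
\|E\times B\|_{L^1}\le\|E\|_{L^2}\|B\|_{L^2}\lesssim \mathcal M_s^2(1+\tau)^{-N/2-1/2},\qquad
\|u\times B\|_{L^1}+\|u\cdot\nabla u\|_{L^1}\lesssim \mathcal M_s^2(1+\tau)^{-N/2},
\]
\[
\|(u\times B)\times B\|_{L^1}\lesssim \mathcal N_0\mathcal M_s^2(1+\tau)^{-N},\qquad
\|\nabla^k(fg)\|_{L^2}\lesssim \|f\|_{L^\infty}\|\nabla^k g\|_{L^2}+\|g\|_{L^\infty}\|\nabla^k f\|_{L^2}\lesssim \mathcal M_s\mathcal N_0(1+\tau)^{-3N/4-k/2}.
\]
Lemma \ref{Integral_lemma} then closes the integrals. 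On $[t/2,t]$, the integrand is $(1+\tau)^{-3N/4-k/2}$ against a kernel that is either exponential or of order $(1+t-\tau)^{-1/2}$; this gives $(1+t)^{-N/4-k/2}$ since $N/2\ge 1$. For the improved $E$-bound (the second sum in $\mathcal M_s$, with $k\le s-1$), the extra $(1+t)^{-1/2}$ comes from Lemma \ref{Lemma_E} applied to the data $G(\tau)$. Here the $E$-component of the kernel carries $|\xi|$ in the low frequencies, for both $u$-free source components.

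\textbf{Main obstacle: $N=2$ on $[0,t/2]$.} In this case $\|u\times B\|_{L^1}\sim(1+\tau)^{-1}$, and the $L^1$ kernel is $(1+t-\tau)^{-1/2-k/2}$. This is exactly the borderline $\gamma=1$ case of Lemma \ref{Integral_lemma}, which produces a $\log(1+t)$ loss. The resolution uses the structure of the system:
\begin{itemize}
\item[(i)] The source $G_2=-u\times B$ enters only the $E$-equation. Lemma \ref{Improved_Estimate_B_Lemma} and Lemma \ref{Lemma_E} then give the $B$- and $E$-components of $e^{-(t-\tau)\mathcal L}(0,G_2,0)$ the kernel $(1+t-\tau)^{-N/4-1/2-k/2}$. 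The exponent $\beta=1+k/2>1$ makes the integral against $(1+\tau)^{-1}$ bounded by $(1+t)^{-1/2-k/2}$, with no log. Its $u$-component vanishes.
\item[(ii)] In the Lorentz force, $E\times B$ already decays like $(1+\tau)^{-3/2}$ thanks to the improved $E$ rate stored in $\mathcal M_s$.
\item[(iii)] For $(u\cdot\nabla)u=\nabla\cdot(u\otimes u)$, we use the divergence form to move one derivative onto the heat kernel, which again yields $\beta>1$.
\end{itemize}
The quartic-type term $(u\times B)\times B$ carries a factor $\mathcal N_0$ and decays like $(1+\tau)^{-N}$, so it is harmless.

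Collecting the contributions and taking the supremum over $\tau\le t$ and $k$ yields \eqref{Weighted_estimate_mass_2}.
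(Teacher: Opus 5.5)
Your treatment of the first sum in $\mathcal M_s$ is essentially the paper's argument: Duhamel with a split at $t/2$, the improved $E$-rate used inside $E\times B$, Lemmas \ref{Lemma_E} and \ref{Improved_Estimate_B_Lemma} for the purely electric source $(0,-u\times B,0)$, and the divergence form of the convection term. That part is fine.

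The gap is in the second sum, i.e.\ the improved bound $\|\nabla^kE(t)\|_{L^2}\lesssim(1+t)^{-N/4-1/2-k/2}$, when $N=2$. You obtain it from Lemma \ref{Lemma_E} applied to the Duhamel data. The $E$-component of $e^{-(t-\tau)\mathcal L}G$ sees only $G_2=-u\times B$, so on $[0,t/2]$ you must bound
\[
\int_0^{t/2}(1+t-\tau)^{-1-k/2}\|u\times B(\tau)\|_{L^1}\,\dtau\lesssim \mathcal M_s^2\int_0^{t/2}(1+t-\tau)^{-1-k/2}(1+\tau)^{-1}\,\dtau\simeq \mathcal M_s^2(1+t)^{-1-k/2}\log(2+t).
\]
In your item (i) this logarithm is harmless, because the target there is $(1+t)^{-1/2-k/2}$. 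Here the target is exactly $(1+t)^{-1-k/2}$, the same power as the kernel, so there is no slack. This is precisely the case $\gamma=1$, $\alpha=\beta$ excluded in Lemma \ref{Integral_lemma}. Your $[t/2,t]$ recipe also falls short for this component in 2D: the bounded $L^2$--$L^2$ kernel gives only $(1+t)^{1-3N/4-k/2}=(1+t)^{-1/2-k/2}$, and Lemma \ref{Lemma_E} with $L^1$ data again produces a logarithm. So the proposal does not establish the second sum of $\mathcal M_s$ for $N=2$, and that sum is exactly the rate your item (ii) feeds back into $E\times B$.

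The paper does not use the semigroup for this part. It applies Duhamel directly to the damped equation $\partial_tE+E=\nabla\times B-u\times B$:
\[
\|\nabla^kE(t)\|_{L^2}\le e^{-t}\|\nabla^kE_0\|_{L^2}+\int_0^te^{-(t-\tau)}\big(\|\nabla^{k+1}B(\tau)\|_{L^2}+\|\nabla^k(u\times B)(\tau)\|_{L^2}\big)\,\dtau .
\]
It then uses two bounds:
\begin{itemize}
\item $\|\nabla^{k+1}B(\tau)\|_{L^2}\le(1+\tau)^{-N/4-1/2-k/2}\mathcal M_s(t)$, taken from the \emph{first} sum. This is why the second sum stops at $k\le s-1$.
\item $\|\nabla^k(u\times B)\|_{L^2}\lesssim\mathcal N_0\mathcal M_s(1+\tau)^{-3N/4-k/2}$.
\end{itemize}
The exponential kernel produces no logarithm in any dimension. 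This route puts the first sum of $\mathcal M_s$ linearly on the right-hand side, so you close it in two steps. First bound the first sum by the right-hand side of \eqref{Weighted_estimate_mass_2}; that bound involves the second sum only quadratically, through $\|E\|_{L^2}\|B\|_{L^2}$. Then substitute.

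Alternatively, you could keep your semigroup approach, but you would need a sharper linear lemma than Lemma \ref{Lemma_E}. For purely electric data, the longitudinal part of $\hat E$ decays like $e^{-t}$. The transverse part equals $\partial_t\hat K_2(t,\xi)\hat E_0$, with $|\partial_t\hat K_2|\lesssim|\xi|^2e^{-c|\xi|^2t}+e^{-ct}$ at low frequencies. This gives the kernel $(1+t-\tau)^{-N/4-1-k/2}$, which beats the logarithm.
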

\begin{proof}
 By Duhamel's formula, the solution of \eqref{Main_System} can be written as 
\begin{equation}\label{Duhamel_formula}
	U(t)=e^{t\mathcal{L}}U_0+\int_0^te^{(t-\tau)\mathcal{L}}\mathbb{P}(f_1(\tau)+f_2(\tau)+f_3(\tau)+f_4(\tau))\dtau,
\end{equation}
where
\begin{equation}
    \begin{aligned}
        &f_1= (-E\times B,0,0), \qquad f_2=(-(u\times B)\times B,0,0)\\
      &f_3=(\nabla\cdot(u\otimes u), 0, 0),\qquad f_4=(0,-u\times B,0)  
    \end{aligned}
\end{equation}
Applying $ \nabla^k $ for $ 0\le k \le s $ and taking $ L^2 $ norm we obtain
\begin{equation}\label{Estimate_M}
    \begin{aligned}
	\|\nabla^k U\|_{L^2} \lesssim\,& \|e^{t\mathcal{L}}\nabla^{k}U_0\|_{L^2}  +\int_0^t\|e^{(t-\tau)\mathcal{L}}\nabla^{k}f_1(\tau)\|_{L^2}\dtau+\int_0^t\|e^{(t-\tau)\mathcal{L}}\nabla^{k}f_2(\tau)\|_{L^2}\dtau\\
	&+\int_0^t\|e^{(t-\tau)\mathcal{L}}\nabla^{k}f_3(\tau)\|_{L^2}+\int_0^t\|e^{(t-\tau)\mathcal{L}}\nabla^{k}f_4(\tau))\|_{L^2}\dtau \\
:=\,&\rm{I}_0+\rm{I}_1+\rm{I}_2+\rm{I}_3+\rm{I}_4.
\end{aligned}
\end{equation}
Our goal now is to estimate the terms $\rm{I}_i,\, i=1,\dots,4$. 

For $\rm{I}_0 $, applying the linear estimate in Lemma \ref{MHD_linear_stability} we get 
\begin{equation}\label{estimate_I0}
	{\rm{I}}_0\le C(1+t)^{-N/4-k/2}\|U_0\|_{L^1}+e^{-ct}\|\nabla^{k}U_0\|_{L^2}.
\end{equation}
For the terms $ \rm{I}_1,\dots, \rm{I}_4 $, we split the integral into two parts over the intervals $[0,t/2]$ and $[t/2,t]$  and apply Lemmas  \ref{MHD_linear_stability}, \ref{Lemma_E} and  \ref{Improved_Estimate_B_Lemma}  with different $ U_0 $ and $V_0$. For example, when dealing with $ \rm{I}_1 $, using Lemma \ref{MHD_linear_stability} with $ U_0=f_1 $ and $ U_0=\nabla^{k}f_1 $ for each part, respectively. 
Hence, we have 
(in this case $ V_0=0$ in Lemma \ref{MHD_linear_stability}) 
\begin{equation}
\begin{aligned}  
{\rm{I}}_1\lesssim&\int_0^{t/2}(1+t-\tau)^{-N/4-k/2}\|E\times B\|_{L^1}\dtau+\int_{t/2}^t(1+t-\tau)^{-N/4}\|\nabla^{k}(E\times B)\|_{L^1} \dtau. \\
	\lesssim &\int_0^{t/2}(1+t-\tau)^{-N/4-k/2}\|E\|_{L^2}\|B\|_{L^2}\dtau\\
	&+\int_{t/2}^t(1+t-\tau)^{-N/4}(\|E\|_{L^2}\|\nabla^{k} B\|_{L^2}+\|\nabla^{k}E\|_{L^2}\|B\|_{L^2})\dtau\\
    \end{aligned}
\end{equation}
The fastest decay rate of the term  $\|E\|_{L^2}\lesssim (1+t)^{-N/4-1/2}$ is useful here to avoid the logarithm loss in the first integral in the two-dimensional case.  Hence, we have 
\begin{equation}
 \label{estimate_I1}
\begin{aligned} 
	{\rm{I}}_1\lesssim &\int_0^{t/2}(1+t-\tau)^{-N/4-k/2}\|E(\tau)\|_{L^2}\|B(\tau)\|_{L^2}\dtau\\
    &+\int_{t/2}^t(1+t-\tau)^{-N/4}\|U(\tau)\|_{L^2}\|\nabla^{k} U(\tau)\|_{L^2}\dtau \\
	\lesssim &\,\mathcal M_s^2(t)\int_0^{t/2}(1+t-\tau)^{-N/4-k/2}(1+\tau)^{-N/2-1/2}\dtau \\
	&+ \mathcal M_s^2(t)\int_{t/2}^t(1+t-\tau)^{-N/4}(1+\tau)^{-N/2-k/2}\dtau\\
	\lesssim &(1+t)^{-N/4-k/2}\mathcal M_s^2(t),
    \qquad 
 \end{aligned}
\end{equation}
where we applied Lemma \ref{Integral_lemma} and  used the estimate 
\begin{equation}
    \|\nabla^\ell U(\tau)\|_{L^2}\le C(1+\tau)^{-N/4-\ell/2}\mathcal M_s(t) \quad \text{for}\quad   0\le \ell \le s 
\end{equation}
 together with 
 \begin{equation}
     \|E(\tau)\|_{L^2}\lesssim (1+\tau)^{-N/4-1/2}\mathcal M_s(t).
 \end{equation}
For $ \rm{I}_2 $, similar to $ \rm{I}_1 $, using the fact that
\begin{equation}
\begin{aligned}
	&\|\nabla^\ell( (u\times B)\times B)\|_{L^1}\le C\|U\|_{L^\infty}\|U\|_{L^2}\|\nabla^\ell U\|_{L^2},\\
	 & \|\nabla^\ell U\|_{L^2}\le C(1+t)^{-N/4-\ell/2}\mathcal M_s(t),\quad \,0\le \ell \le s,
	\end{aligned}
\end{equation}
and 
\begin{equation}
	\|U\|_{L^\infty}\le C(1+t)^{-N/2}\mathcal N_0(t),
\end{equation}
 one has
\begin{equation}
\begin{aligned}
	{\rm{I}}_2\lesssim&\,\int_0^{t/2}(1+t-\tau)^{-N/4-k/2}\|(u\times B)\times B\|_{L^1}\dtau\\
	&+\int_{t/2}^t(1+t-\tau)^{-N/4}\|\partial ^k((u\times B)\times B)\|_{L^1} \dtau \\
	\lesssim&\,\int_0^{t/2}(1+t-\tau)^{-N/4-k/2}\|U\|_{L^\infty}\|U\|_{L^2}^2\dtau\notag\\
	&+\int_{t/2}^t(1+t-\tau)^{-N/4}\|U\|_{L^\infty}\|U\|_{L^2}\|\nabla^{k} U\|_{L^2} \dtau. 
	\end{aligned}
\end{equation}
This gives 
\begin{equation}\label{estimate_I2}
\begin{aligned}
	{\rm{I}}_2\lesssim \,& \mathcal N_0(t)\mathcal M_s^2(t)\int_0^{t/2}(1+t-\tau)^{-N/4-k/2}(1+\tau)^{-N}\dtau\\
	&+\mathcal N_0(t)\mathcal M_s^2(t)\int_{t/2}^t(1+t-\tau)^{-N/4}(1+\tau)^{-N-k/2} \dtau \\
	\lesssim&\,\mathcal N_0(t)\mathcal M_s^2(t)(1+t)^{-N/4-k/2}.
	\end{aligned}
\end{equation}
The estimate of $ \rm{I}_3 $ is slightly different. For the integral from $ 0 $ to $ t/2 $, we choose $ U_0=f_3 $ in Lemma \ref{MHD_linear_stability}. However for the rest part of integral, we choose $ U_0=\nabla^{k}f_3 $:
\begin{equation}
\begin{aligned}
		{\rm{I}_3}\lesssim\,&\int_0^{t/2}(1+t-\tau)^{-N/4-k/2}\|f_3(\tau)\|_{L^1}\dtau+\int_{t/2}^t\| e^{(t-\tau)\mathcal L}\nabla^{k}f_3(\tau)\|_{L^2} \dtau\nonumber \\
		\lesssim\,& \int_0^{t/2}(1+t-\tau)^{-N/4-(k+1)/2}\|u\otimes u\|_{L^1}\dtau+\int_{t/2}^t(1+t-\tau)^{-N/4-1/2}\|\nabla^{k}(u\otimes u)\|_{L^1} \dtau\nonumber \\
		\lesssim\,& \int_0^{t/2}(1+t-\tau)^{-N/4-(k+1)/2}\|U\|_{L^2}^2\dtau+\int_{t/2}^t(1+t-\tau)^{-N/4-1/2}\|U\|_{L^2}\|\nabla^{k}U\|_{L^2} \dtau\nonumber 
		\end{aligned}
\end{equation}
Applying Lemma \ref{Integral_lemma}, we obtain 
\begin{equation}\label{estimate_I3}
\begin{aligned}
		{\rm{I}_3}\lesssim\,&\mathcal M_s^2(t)\int_0^{t/2}(1+t-\tau)^{-N/4-(k+1)/2}(1+\tau)^{-N/2}\dtau\nonumber\\
		+&\mathcal M_s^2(t)\int_{t/2}^t(1+t-\tau)^{-N/4-1/2}(1+\tau)^{-N/2-k/2}\dtau \\
		\lesssim\,&\mathcal M_s^2(t)(1+t)^{-N/4-k/2}.
		\end{aligned}
\end{equation}
The estimate of $\rm{I}_4$ will also experience a logarithm loss in the two-dimensional case if we proceed as in the estimate of $\rm{I}_1$, where we exploit the improved decay of $E$ in the nonlinearity $E\times B$. That does not work here since there is no faster decay for any of the components in the source term $u\times B$ to recover the loss as in the case of $\rm{I}_1$. However, the structure of $f_4$ helps eliminate this loss. Recall that $f_4=(0,-u\times B, 0)$. Hence, the forcing is entirely in the electric-field equation.

The estimate of $ \rm{I}_4 $ goes like the previous ones, but noting that we take $ f_4 =(0,-u\times B, 0)$ acts  only on the electric field equation, we can apply Lemma \ref{Lemma_E} and Lemma \ref{Improved_Estimate_B_Lemma} with the data $V_0=(-u\times B,0)$,
we have
\begin{equation}
\begin{aligned}	{\rm{I}}_4\lesssim&\,\int_0^{t/2}(1+t-\tau)^{-N/4-1/2-k/2}\|u\times B\|_{L^1}\dtau\\
&+\int_{t/2}^t(1+t-\tau)^{-N/4-1/2}\|\nabla^{k}(u\times B)\|_{L^1} \dtau 
	+\int_0^te^{-c(t-\tau)}\|\nabla^{k}(u\times B)\|_{L^2}\dtau\\
	\lesssim&\, \int_0^{t/2}(1+t-\tau)^{-N/4-1/2-k/2}\|U\|^2_{L^2}\dtau
    +\int_{t/2}^t(1+t-\tau)^{-N/4-1/2}\|U\|_{L^2}\|\nabla^{k}U\|_{L^2} \dtau \\
	 &+\int_0^te^{-c(t-\tau)}\|U\|_{L^\infty}\|\nabla^{k}U\|_{L^2}\dtau. 
	\end{aligned}
\end{equation}
Consequently, we obtain 
	\begin{equation}\label{estimate_I4}
\begin{aligned}
	{\rm{I}}_4	\lesssim&\, \mathcal M_s^2(t)\int_0^{t/2}(1+t-\tau)^{-N/4-1/2-k/2}(1+\tau)^{-N/2} \dtau\\
		&+\mathcal M_s^2(t) \int_{t/2}^t(1+t-\tau)^{-N/4-1/2}(1+\tau)^{-N/2-k/2}\dtau \\
		& +\mathcal{N}_0(t)\mathcal M_s(t)\int_0^te^{-c(t-\tau)}(1+\tau)^{-3N/4-k/2}\dtau\\
		\lesssim&\,(1+t)^{-N/4-k/2}(\mathcal M_s(t)^2+\mathcal N_0(t)\mathcal M_s(t)),
		\end{aligned}
\end{equation}
for $N=3$. For $N=2$, the first two integrals in \eqref{estimate_I4} are treated differently. Indeed, we have  
\begin{equation}
\begin{aligned}
    \int_0^{t/2}(1+t-\tau)^{-N/4-1/2-k/2}(1+\tau)^{-N/2} \dtau=&\,\int_0^{t/2}(1+t-\tau)^{-1-k/2}(1+\tau)^{-1} \dtau\\
    \lesssim&\, (1+t)^{-1-k/2}\log(2+t)\\
    \lesssim&\, (1+t)^{-1/2-k/2}. 
    \end{aligned}
\end{equation}
Similarly, 
\begin{equation}
    \begin{aligned}
        \int_{t/2}^t(1+t-\tau)^{-N/4-1/2}(1+\tau)^{-N/2-k/2}\dtau\lesssim&\, (1+t)^{-1-k/2}\int_{t/2}^t(1+\tau)^{-N/2-k/2}\dtau\\
    \lesssim&\, (1+t)^{-1-k/2}\log(2+t)\\
    \lesssim&\, (1+t)^{-1/2-k/2}.    \end{aligned}
\end{equation}
Consequently, we have for $N=2,\, 3$
\begin{equation}\label{estimate_I4_2}
    {\rm{I}_4}\lesssim \, (1+t)^{-N/4-k/2}(\mathcal M_s(t)^2+\mathcal N_0(t)\mathcal M_s(t)). 
\end{equation}

Now, we derive the estimate of $E$. Recall that $E$ satisfies the equation 
\begin{equation}\label{E_Nonl_Eqs}  \partial_t E+E=\nabla\times B-u\times B. 
\end{equation}
Applying $\nabla^k$ to \eqref{E_Nonl_Eqs} and using the Duhamel formula, we obtain 
\begin{equation}\label{E_Main_Est}
\begin{aligned}
    \|\nabla^k E(t)\|_{L^2}\lesssim&\, e^{-t}\|\nabla^k E_0\|_{L^2}+\int_0^t e^{-(t-\tau)}\| \nabla^{k+1} B)(\tau)\|_{L^2}\dtau\\
    &+\int_0^t e^{-(t-\tau)}\|\nabla^k (u\times B)(\tau)\|_{L^2}\dtau. 
    \end{aligned}
\end{equation}
First, we have from \eqref{M_s} and for all $0\leq k\leq s-1$,
\begin{equation}\label{B_k_E_Est}
    \| \nabla^{k+1} B)(\tau)\|_{L^2}\lesssim \mathcal{M}_s(t)(1+\tau)^{-N/4-1/2-k/2}. 
\end{equation}
Next, we have the estimate (see \eqref{First_inequaliy_Guass})
\begin{equation}\label{Prod_Est_E}
\begin{aligned}
    \|\nabla^k (u\times B)(\tau)\|_{L^2}\lesssim &\,\|u\|_{L^\infty}\|\nabla^k B\|_{L^2}+\|B\|_{L^\infty}\|\nabla^k u\|_{L^2}\\
    \lesssim &\, \mathcal{N}_0(t)(1+\tau)^{-N/2}\mathcal{M}_s(t) (1+\tau)^{-N/4-k/2}\\
    \lesssim &\,\mathcal{N}_0(t)\mathcal{M}_s(t)(1+\tau)^{-3N/4-k/2}. 
    \end{aligned}
\end{equation}
Plugging \eqref{B_k_E_Est} and \eqref{Prod_Est_E} into \eqref{E_Main_Est}, we obtain for $0\leq k\leq s-1$, 
\begin{equation}\label{E_final_Estimate}
\begin{aligned}
     \|\nabla^k E(t)\|_{L^2}\lesssim&\, (1+t)^{-N/4-1/2-k/2}(\|\nabla^k E_0\|_{L^2}+\mathcal{M}_s(t)+\mathcal{N}_0(t)\mathcal{M}_s(t)). 
     \end{aligned}
\end{equation}
Collecting the estimates  \eqref{estimate_I1}, \eqref{estimate_I2}, \eqref{estimate_I3},  \eqref{estimate_I4_2} and \eqref{E_final_Estimate} we obtain \eqref{Weighted_estimate_mass_2} and hence  finish the proof of Proposition \ref{proposition_Decay}. 
\end{proof}
\subsection{Closing the estimates--Proof of Theorem \ref{Theorem_Global_existence}}\label{Subsection_Closing}
Using the estimates \eqref{Weighted_estimate} and \eqref{Weighted_estimate_mass}
together with  Lemma \ref{Lemma_N_i_M_s}, we obtain 
\begin{subequations}  
\begin{equation}
\TE_s^{2}\left( t\right) +\mathcal D_s^{2}\left( t\right) \lesssim  \|U_0\|_{H^s}^2+\mathcal{E}_s(t)^2\mathcal{D}_s(t)^2+(\mathcal M_s(t)+\mathcal M_s^2(t))\mathcal D_s^2(t),  \label{Weighted_estimate_1}
\end{equation}
and 
\begin{equation}\label{Weighted_estimate_mass_1}
	\mathcal M_s(t)\lesssim (\|U_0\|_{L^1}+\|U_0\|_{H^s})+\mathcal M_s(t)^2+\mathcal M_s^2(t)+\mathcal M_s^3(t).
\end{equation}
\end{subequations}
By setting 
\begin{equation}
\mathcal{Y}_s(t)=\mathcal {E}_s(t)+\mathcal {D}_s(t)+\mathcal {M}_s(t),
\end{equation}
we arrive at the inequality 
\begin{equation}
\mathcal{Y}_s(t)^2\lesssim (\|U_0\|_{L^1}+\|U_0\|_{H^s})^2+\mathcal{Y}_s(t)^3+\mathcal{Y}_s(t)^4.
\end{equation}
for which we can deduce (by applying Lemma \ref{Lemma_Stauss}) that 
\begin{equation}
\mathcal{Y}_s(t)\lesssim  \|U_0\|_{L^1}+\|U_0\|_{H^s}
\end{equation}
provided that $\Lambda=\|U_0\|_{L^1}+\|U_0\|_{H^s}$ is small enough, say $\Lambda\leq \delta_0$. This gives the desired a priori estimates of the solution, by which we can continue a   unique local solution to be global in time. The obtained solution verifies the decay estimates \eqref{Decay_Main} since we have shown that $\mathcal{M}_s(t)\lesssim \Lambda $. This finishes the proof of Theorem \ref{Theorem_Global_existence}.

\subsection*{Acknowledgements}
During the preparation of this manuscript, the author used ChatGPT (OpenAI) solely for language editing and improvement of the clarity and readability of the manuscript. All mathematical contents, arguments, and results were developed and verified by the author. 
\subsection*{Statements and Declarations} The author declares that he has no conflict of interest. 

 \subsection*{Data availability}
No data sets were generated or analyzed during the current study.

  
	\end{document}